\numberwithin{equation}{section}
\newenvironment{rcases}
  {\left.\begin{aligned}}
  {\end{aligned}\right\rbrace}
\definecolor{Mygrey}{gray}{0.75}
\def\displayandname#1{\rlap{$\displaystyle\csname #1\endcsname$}%
                      \qquad \texttt{\char92 #1}}
\def\url@leostyle{
  \@ifundefined{selectfont}{\def\UrlFont{\sf}}{\def\UrlFont{\small\ttfamily}}}
\newcommand{\myemph}[1]{\emph{#1}}
\DeclareMathAlphabet{\mathbbold}{U}{bbold}{m}{n}
\newcommand{\zero}{\mathbbold{0}}
\newcommand{\unit}{\mathbbold{1}}
\newcommand{\inv}{\operatorname{inv}}
\newcommand{\Id}{\operatorname{Id}}
\newcommand\tran{{\operatorname{tran}}}
\newcommand{\supertropical}{\mathbb{ER}_{\max}}
\newcommand\T{{\operatorname{T}}}
\newcommand\sign{{\operatorname{sign}}}
\newcommand\adj{{\operatorname{adj}}}
\newcommand{\rmax}{\mathbb{R}_{\max}}
\newcommand{\R}{\mathbb{R}}
\newcommand{\cP}{\mathcal P}
\newcommand{\G}{\mathcal G}
\def\allperm{\mathfrak{S}}
\newcommand{\tr}{\operatorname{tr}}
\newcommand{\per}{\operatorname{per}}
\newcommand{\smax}{\mathbb{SR}_{\max}}
\def\SS{{\mathcal S}}
\newcommand{\cli}{\bar{i}}
\newcommand{\II}{\mathcal I}
\newcommand{\NEW}[1]{{\em #1\/}\index{#1}}
\newcommand{\thin}{thin }
\newcommand{\Mod}{\mathcal M} 
\def\TT{{\mathcal T}}
\newcommand{\skewproductstar}[2]{#1{\rtimes}{#2}}
\newcommand{\balance}{\,\nabla\, }
\newcommand{\succmod}{\;\mid\succeq^{\!\circ}\mid\;}
\newcommand{\summand}{signed product of signed elementary bijections\xspace}
\newtheorem{thm}{Theorem}[section]
\newtheorem{pro}[thm]{Proposition}
\newtheorem{lem}[thm]{Lemma}
\newtheorem{cor}[thm]{Corollary}
\newtheorem{prop}[thm]{Property}
\theoremstyle{definition}
\newtheorem{df}[thm]{Definition}
\theoremstyle{remark}
\newtheorem{rem}[thm]{Remark}
\newtheorem{exa}[thm]{Example}
\newcounter{myenumerate}
\renewcommand{\themyenumerate}{(\arabic{myenumerate})}
\newenvironment{myenumerate}{\begin{list}{\themyenumerate }{
\usecounter{myenumerate}\setlength{\labelsep}{0.5ex}
\setlength{\leftmargin}{0pt}\setlength{\labelwidth}{-\labelsep}
}}{\end{list}}
\title{Tropical compound matrix identities}
\author{Marianne Akian}
\address{Marianne Akian,
INRIA Saclay--\^Ile-de-France and CMAP, \'Ecole 
Polytechnique, CNRS. Address:
CMAP, \'Ecole Polytechnique,
Route de Saclay,
91128 Palaiseau Cedex, France.}
\email{Marianne.Akian@inria.fr}
\author{St\'ephane Gaubert}
\address{St\'ephane Gaubert,
INRIA Saclay--\^Ile-de-France and CMAP, \'Ecole 
Polytechnique, CNRS. Address: CMAP, \'Ecole Polytechnique,
Route de Saclay,
91128 Palaiseau Cedex, France.}
\email{Stephane.Gaubert@inria.fr}
\author{Adi Niv}
\address{Adi Niv,
Shamoon Academic College of Engineering.
Address: Mathematics Department, 56 Bialik St.
Beer-Sheva 84100, Israel.}
\email{Adini1@sce.ac.il}
\thanks{The first two authors have been partially supported by
the PGMO Program of FMJH and EDF, and by the MALTHY Project of the ANR Program.
This work was performed when the third author was with INRIA Saclay--\^Ile-de-France and CMAP, Ecole polytechnique, and it was supported by the French Chateaubriand grant and INRIA postdoctoral fellowship.}
\begin{document}
  
\begin{abstract} 
We prove identities on compound matrices  in extended tropical semirings. Such identities  
include analogues to properties of conjugate matrices, powers of matrices and~$\adj(A)\det(A)^{ -1}$, 
all of which have implications on the  eigenvalues of the corresponding matrices. A tropical 
Sylvester-Franke identity is provided as well.

\vskip 0.15 truecm

\noindent \textit{Keywords: Tropical linear algebra; characteristic polynomial; compound matrix; 
eigenvalues; permanent; definite matrices; pseudo-inverse.}
\vskip 0.1 truecm

\noindent \textit{AMSC: 15A15 (Primary), 15A09, 15A18, 15A24, 15A29, 15A75, 15A80, 15B99.} 	

\end{abstract}

\maketitle

\thispagestyle{empty}

\section{Introduction}

The max-plus or tropical semiring~$\mathbb{R}_{\max}$ is the set of real numbers~$\mathbb{R}$, 
completed with the element~$-\infty$, and equipped with the operations~$a\oplus b=\max\{a,b\}$ 
and~$a\odot b=a+b$ (also denoted as $ab$). The zero-element of this structure is therefore~$-\infty$, 
which is its minimal element. See for instance~\cite{BCOQ92,butkovicbook,MacStur,MPA} and the references therein for more background on linear algebra over the max-plus semiring.

The lack of additive inverses is a source of difficulties in the study of tropical structures. In particular, the notion of ``vanishing''
has to be adapted: a tropical polynomial vanishes
at a point if the maximum of the values of its monomials,
evaluated at this point, is achieved twice at least. 
In applications coming from real geometry~\cite{virodequantization},
one considers tropical polynomials
enriched with a sign information. Then, vanishing tropically
means that the maximum of the value of the monomials
with a positive tropical sign coincides with
the maximum of the value of the monomials
with a negative tropical sign. In this way, 
one can define the notion of polynomial identity
over the tropical semiring. 
Such polynomial identities
can often be proved by direct combinatorial
methods, i.e., by ``bijective proofs'',
along the lines of~\cite{straubing,zeilberger}
or of~\cite{gondran} (see also~\cite{GONDRAN1984147}). 
It was observed in~\cite{reutstraub} that 
certain determinantal identities over semirings can be derived
from their classical analogues, avoiding the recourse
to bijective proofs. This idea led to a {transfer principle}
in~\cite{G.Ths},
later refined in~\cite{LDTS}. As an application of the transfer
principle, a number of determinantal
identities (Laplace type expansions~\cite{reutstraub,LS,G.Ths,LDTS}, Binet-Cauchy theorem~\cite{G.Ths,gauBC96b,LDTS}) or
more advanced polynomial identities (Amitsur-Levitzki~\cite{gaubert96a,LDTS}),
were obtained, with several applications. Polynomial
identities have also appeared more recently in works on 
the ``supertropical'' extension of the tropical semiring~\cite{STMA,STMA2}. 

In the present paper, we establish tropical analogues of several
classical identities in the theory of determinants~\cite{ITD}. Unlike
the previously mentioned tropical determinantal identities, the identities that we establish have the remarkable feature that they do not follow from the transfer principle: more precisely, an application of the transfer principle would lead to weaker identities.

In order to formulate these identities, it is convenient to use 
the setting of {\em extensions} of semirings. This has
been developed in a number of works~\cite{LS,G.Ths,TA,LDTS,STMA,STMA2,AGG14}. Two basic extensions have been considered so far.
The {\em supertropical semiring}~\cite{TA,STMA,STMA2} is the union of two
copies of the set of tropical numbers, one copy represent
the ordinary numbers, whereas the other copy represent
``ghost'' numbers, encoding the fact that a maximum
is achieved twice. 
The  {\em symmetrized max-plus semiring}~\cite{LS,G.Ths,LDTS,AGG14} is the union
of three copies of the set of tropical numbers, representing
respectively tropically positive elements, tropically negative elements,
and ``balanced'' or singular elements, of the form
$a\oplus(\ominus a)$.
These two extensions may be thought of as special
hyperfields~\cite{krasner,connesconsani,1006.3034,baker}
(a hyperfield is a structure with a multivalued addition on a base set, 
the supertropical and symmetrized semirings can be identified
to the powerset semirings of two hyperfields). 
The supertropical numbers arise when considering images
of complex Puiseux series by the nonarchimedean valuation,
whereas symmetrized tropical numbers arise as images
of real Puiseux series.

We further elaborate on these different structures in Section~\ref{preliminaries}.  In particular, the obvious resemblance between them 
can be formalized thanks to the notion of {\em semiring with symmetry}~\cite{LDTS,AGG14}.
The latter are semirings equipped with an operation $a \mapsto \ominus a$, in which
singular elements (playing the role of the zero element) are of the form $a\ominus a$. In the supertropical case, the symmetry is just the identity
map, whereas the symmetry operation behaves formally as an ``opposite sign'' in the case of the symmetrized tropical semiring. Then, the notion of polynomial
identity has to be revised. Instead of looking for identities
of the form $a=b$, where $a,b$ can be polynomial expressions, we shall 
look for identities of the form $a\succeq^\circ b$, to be read
``$a$ {\em surpasses} $b$''.
The latter relation is defined by
$$a\succeq^\circ b\Leftrightarrow 
a=b\oplus (c\ominus c),\; \text{for some $c$.}$$
This way of writing identities may surprise at the first sight. However,
some of the most handy tropical polynomial identities 
are expressed in this way. The reader will not be wrong in imagining
that the presence of the singular term $c\ominus c$ 
accounts for the {\em irreversibility} of algebraic computations
in the tropical setting: when doing such computations, some
terms of the form $c\ominus c$, which vanish in the usual
algebra, remain in the tropical algebra.

Let us now come to our main topic.
We shall denote by~$A^{\wedge k}$ the~$k$th  
{\em compound matrix} or {\em Grassman power}
of~$A$, obtained by taking the~$k\times k$ minors of $A$ (see section~\ref{preliminaries}). 
The compound matrix has been widely studied. One can find definitions, identities and  basic algebraic properties in~\cite{MAHJ}.  
In contrast with the situation over rings,
the invertibility of the determinant does not imply
that a matrix is invertible.
Nevertheless, as shown in several works, especially~\cite{LS}, \cite{PI&CP},  the
familiar expression 
\[ A^\nabla=\adj(A)\det(A)^{ -1}
\enspace,
\]
which provides in classical algebra the inverse of an invertible matrix, 
can be defined as soon as the determinant of $A$ is invertible.
It does inherit classical properties, which justify the name of quasi-inverse.
Among others,  $A^\nabla$ can be factored as a product of elementary matrices~\cite{FTM}, a property which is equivalent to nonsingularity over fields, but not over semifields such as the tropical one.

In Sections~\ref{jacobi},  \ref{oid} and \ref{sf},
we use graph theory to provide tropical analogues for  identities 
concerning the compounds, quasi-inverse, powers, and so-called  conjugations
of matrices.
 These analogues can be interpreted  in terms perfect matchings: they are 
concerned with the existence of {different} permutations, described by the  {same subset of arcs}.

In Section~\ref{jacobi}, Theorem~\ref{QDF}, we prove  the analogue of 
Jacobi's identity 
$$\det(A)\big(DA^\nabla D\big)^{\wedge n-k}_{J^c,I^c}\succeq^\circ 
A^{\wedge k}_{_{I,J}},\text{ where $D$ is diagonal with $D_{i,i}=(\ominus\unit)^i,$}$$ 
which implies (Corollary~\ref{QEP})
$$\det(A)\tr
\big((A^{\nabla})^{\wedge n-k}\big)\succeq^\circ \tr\big(A^{\wedge k}\big)
\enspace .$$   
This establishes in particular~\cite[Conjecture 6.2]{PI&CP}.

Recall that~$\models$ denotes the specialization of the relation~$\succeq^\circ$ to the supertropical case. In 
Section~\ref{eppm} we use~$(A^{ m})^{\wedge k}\succeq^\circ(A^{\wedge k})^{ m}$  
in order to prove  the supertropical
 identity $$
\tr\big((A^{ m})^{\wedge k}\big)
\models \big(\tr(A^{\wedge k})\big)^{ m},\quad k=1,\ldots n$$  stated in 
Corollary~\ref{compowtr}. 
  The identity  $$E^{\wedge k}(E^\nabla)^{\wedge k}\succeq^\circ \mathcal{I}\ \ \ 
\text{(Proposition~\ref{nabcom})},$$  leads to Theorem~\ref{CEP},  concerning a so 
called tropical conjugation $$\tr\big((E^\nabla A E)^{\wedge k}\big)\succeq^\circ 
\tr(A^{\wedge k}).$$ 

Determinants have been studied in association to linear algebra, graph theory and algebraic geometry. One can 
see~\cite{ITD} for a survey on identities of determinantal identities,
including the
Sylvester--Franke identity.
In Section~\ref{sf}, we provide a tropical Sylvester-Franke identity,
which holds in particular over 
$\mathbb{R}_{\max}$ $$\per(A^{\wedge k})=\per(A)^{\left(\substack{n-1\\k-1}\right)}
\ \ \ \text{(Theorem~\ref{TSF})}.$$

The  characteristic polynomial of a square matrix is known for its applications in linear algebra. 
Yet, due to its connection with compound matrices, the tropical characteristic polynomial
 (see~\cite{TBE}   and ~\cite{ETCP}) has applications  in graph theory.  
We conclude this paper in Section~\ref{tcp}, by applying the identities of
Section~\ref{jacobi} and \ref{oid} to the tropical characteristic polynomials of the corresponding 
matrices. Theorem~\ref{CP} states
 \begin{equation}\label{cpint}f_M(X)\succeq^\circ \begin{cases}
\det(A)^{-1}X^n f_A(X^{-1})\enspace ,&M=A^\nabla\enspace,\\
f_A(x)\enspace ,& M =E^\nabla AE \enspace,
\end{cases}\end{equation} 
where $f_A$ denotes the characteristic polynomial of the matrix $A$,
and it also relates the  coefficients of the characteristic polynomials of~$A$ and 
its powers. We then  establish the connection to their corresponding eigenvalues, 
and deal with the special case of equality in Corollaries~\ref{CPNS}
and~\ref{cp1} respectively.

Note that the supertropical version of the  identities in~\eqref{cpint}  
were proved in~\cite{PCP}, \cite{PI&CP} and~\cite{shitov}.


\section{Preliminaries}\label{preliminaries}

Let~$\mathcal{P}(S)$ be the power set of a totally ordered set~$S$, and~$\mathcal{P}_k(S)
=\{I\in \cP(S):|I|=k\}$. We order~$\mathcal{P}_k(S)$ by  the (total) lexicographic order, 
with respect to the order in~$S$.

We denote by~$\allperm_{I,J}$ the set of  bijections between the two totally ordered 
sets~$I$ to~$J$. A bijection from a set to itself is called a \myemph{permutation}, 
and~$\allperm_{I}$ denotes the set of  permutations on~$I$. 
The permutation of $S$ such that $\pi(i)=i$, for all $i\in S$,
 is called the \myemph{identity permutation}, and denoted by $\Id$. 
Note that the restriction of a permutation is a bijection,  that is $$\pi|_J\in \allperm_{J,\pi[J]},
\ \forall J\in \cP(S)\; \text{and}\ \pi\in\allperm_{S}\enspace .$$ 
We denote~$\sigma[I]=\{\sigma(i):i\in I\}$, then,
for any $\sigma\in\allperm_{S}$ and $0\leq k\leq n$, we may define
$\sigma^{(k)}\in\allperm_{\mathcal{P}_k(S)}$, by $\sigma^{(k)}(I)=\sigma[I],\ \forall I
\in \mathcal{P}_k(S).$

The remaining part of this section is organized as follows.
In Section~\ref{gt} we recall basic notations in graph theory, followed by 
their matrix interpretation.
We then present in Section~\ref{sws} a unified algebraic setting, allowing
us to deal with several extensions of the tropical semiring.
In Section~\ref{tma} we provide matrix definitions, adjusted to the setting
and notation of Sections~\ref{gt} and~\ref{sws}.


\subsection{Graph theory}\label{gt}

We follow the terminology in ~\cite{AGT} and ~\cite{PA}. Let~$G$ be a weighted, 
directed graph (digraph), with~$n\in\mathbb{N}$ nodes, possible loops and no multiple arcs. 
The set of nodes is denoted by~$[n]=\{1,2,...,n\}$, an arc from~$i$ to~$j$ is denoted by~$(i,j)$, 
and its weight by~$a_{i,j}$. 
We shall assume here that the weights of the graph $G$ take their values in
a given semiring $(\SS,\oplus,\odot)$ with $\zero$ as its zero
(in particular $\zero$ is absorbing for $\odot$, 
see Section~\ref{sws}).
 The \myemph{weight matrix} of~$G$ is the~$n\times n$ matrix~$M_G$
over $\SS$, having~$a_{i,j}$ in its~$i,j$ position, when~$(i,j)$ is an arc of~$G$, and~$\zero$ 
otherwise. 
Conversely, if~$A=(a_{i,j})$ is an~$n\times n$ matrix with entries in $\SS$,
then the graph~$G$ of~$A$ is the  weighted, directed graph, with~$n$ nodes, and an arc 
from~$i$ to~$j$ with weight~$a_{i,j}$ if~$a_{i,j}\neq \zero$.

\begin{df} A \myemph{path} of length $m$  from $i$ to $j$ in $G$ is a set of $m$ arcs 
with concatenating nodes. This path is called \myemph{closed}~if $i=j$, \myemph{open}~if $i\neq j$,  \myemph{elementary}
 if  its intermediate nodes are distinct, and different from $i$ and $j$, and \myemph{maximal}  if it includes 
all the nodes~$[n]$. An elementary maximal path is also called \myemph{Hamiltonian}.
The \myemph{in-degree} (resp.~\myemph{out-degree}) of the node $i$, denoted by $d_{in}(i)$ 
(resp.~$d_{out}(i)$), is the number of arcs terminating (resp.~originating) in $i$.

 A \myemph{cycle} in~$G$ is an elementary closed path. 
A \myemph{bijection} (and in particular, a 
permutation) in~$G$ is a bijection~$\pi\in\allperm_{I,J}$, $I,J\in \cP_k([n])$, with a graph, $\G_\pi:=
\{(i,\pi(i)):\ i\in I\}$ composed of arcs in~$G$. In that case the subgraph of $G$ composed of the arcs 
in $\G_\pi$ satisfies
$$d_{out}(i)=d_{in}(j)=1,\ \forall i\in I,\  j\in J.$$
In the sequel, we shall identify a bijection with its graph.
The bijection~$\pi\in\allperm_{I,J}$ can be decomposed into disjoint elementary (open or closed) paths.
Denote by~$\cli\subseteq I$ the subset of nodes of~$I$ which 
are in the elementary path of~$i\in I$ in~$\pi$, and by~$C_\pi=\{\cli:\ i\in I\}$  the quotient set obtained by the partition 
of~$I$, induced from the elementary path decomposition of~$\pi$.
 In particular, a permutation~$\pi\in\allperm_{I}$ can be decomposed into disjoint cycles, and we shall 
identify as usual $\pi$ with the composition of these cycles. 
A bijection is called \myemph{elementary} if its decomposition has a single open elementary path and  trivial cycles  
 (loops).
\end{df}

We abuse these notations by using them for the matrix $M_G=(a_{i,j})$.   That is, the product 
of $m$ entries with concatenating indices describes a path. Notice that over $\mathbb{R}_{\max}$ 
the weight of a path is the sum of the weights of its arcs (or its entries).

 The number of non-$\zero$ entries with a right (resp.~ left) index $i$ is $d_{in}(i)$ (resp.~$d_{out}(i)$).

 The product $\bigodot_{i\in I} a_{i,\pi(i)}$ is the bijection $\pi\in \allperm_{I,J}$ of entries of $M_G$, 
and can  be decomposed into disjoint cycles and elementary open paths
\begin{equation}\label{cycfact}\bigodot_{\cli\in C_\pi}\bigodot_{j\in \cli}a_{j,\pi(j)},\ \text{ with }
\bigodot_{j\in \cli}a_{j,\pi(j)}=a_{i,\pi(i)}a_{\pi(i),\pi^2(i)}\cdots a_{\pi^{m_i-1}(i),\pi^{m_i}(i)},
\end{equation} 
 $$\text{where }\ \pi^{m_i}(i)\begin{cases}\in J\setminus I&\text{ if }\ \cli\not\subseteq J\text{  
(that is, }i\in I\setminus J),\\
&\qquad \text{obtaining an elementary open path,}\\=i&\text{ if }\ \cli\subseteq J\text{, 
obtaining a cycle.}\end{cases}$$
 In the special case of $I=J$, we get that $\pi^{m_i}(i)=i,\ \forall \cli\subseteq C_\pi.$ 

\begin{rem} \label{maxpath} If an elementary path has an intermediate index, it can be decomposed into 
two non-maximal elementary open paths, and a non-maximal elementary open path can  be extended at each of its ends into an 
elementary path.\end{rem}


\subsection{Semirings with a symmetry}\label{sws}
Recall that a \NEW{semiring} is a set $\SS$ with two binary operations, addition, denoted by $+$,
and multiplication, denoted by $\cdot$ or by concatenation, such 
that:
\begin{itemize}
\item $\SS$ is an abelian monoid under addition (with neutral element denoted by $\zero$ and called zero);
\item $\SS$ is a monoid under multiplication (with neutral element denoted
by $\unit$ and called unit);
\item multiplication is distributive over addition on both sides;
\item $s \zero=\zero s=\zero$ for all $s\in \SS$.
\end{itemize}

A semiring is \NEW{idempotent} when the addition is idempotent,
that is $a+ a=a$ for all $a\in \SS$.
It is \NEW{commutative} when the multiplication is commutative,
that is $ab=ba$ for all $a,b\in \SS$.
The max-plus semiring $\rmax$ described in the introduction is idempotent
and commutative.
Semimodules over semirings, and morphisms of semirings or semimodules are defined as for 
modules over rings, and morphisms of rings or modules, respectively.

\begin{df} A map $\tau:\SS\to \SS$ is a \NEW{symmetry} of the semiring $\SS$  if $\tau$ 
is a left and right $\SS$-semimodule homomorphism of order $2$, 
from  $\SS$ to itself, which means that it satisfies:
\begin{enumerate}
\item $\tau(a+ b)=\tau(a)+\tau(b),$

\item $\tau(\zero)=\zero,$

\item $\tau(ab)=a\tau(b)=\tau(a)b,$

\item $\tau(\tau(a))=a.$
\end{enumerate}
\end{df}

The typical example of a symmetry is the map $\tau(a)=- a$,
where $- a$ is the opposite of $a$ for the addition $+$ 
on a ring $\SS$, which satisfies also $a- a=\zero$.
Another possible symmetry, which works in any semiring is the
identity map $\tau(a)=a$.

The concept of semirings with symmetry first appeared in~\cite{G.Ths}, in order to prove some
 identities on tropical matrices  using a transfer principle.
It was then further used in~\cite{LDTS} in relation with the transfer principle and used 
in~\cite{LDTS,AGG14} to give a unified view of several extensions of the tropical semiring introduced
 in the literature, in particular the symmetrized max-plus semiring of~\cite{LS}, see also~\cite{G.Ths,BCOQ92}, 
and the Izhakian extension of the tropical semiring introduced in~\cite{TA}, see also~\cite{STLA}, that we next recall.
 Due to the lack of inverses in~$\mathbb{R}_{\max}$, its~$\zero$ often loses its algebraic role in 
the sense it is usually observed over rings. 
The following semirings were introduced to give an algebraic framework to the notions of matrix singularity, 
linear or algebraic equations and algebraic varieties over the tropical semiring.


\subsubsection{Symmetrized max-plus semiring}\label{sec-symmmax}
This extension, denoted~$\smax$, was studied in~\cite{LS,G.Ths,BCOQ92}.
Several equivalent constructions were proposed in~\cite{LDTS,AGG14}.
In particular, it can be seen as the union of three copies of~$\rmax$, denoted 
respectively~$\smax^\oplus=\rmax$,~$\smax^\ominus$ and~$\smax^\circ$,
in which the zero-elements~$-\infty$ are identified and denoted~$\zero$.
The copies of~$a\in\rmax$ in~$\smax^\oplus$,~$\smax^\ominus$ and~$\smax^\circ$
are respectively denoted~$\oplus a=a$,~$\ominus a$ and~$a^\circ$.
The set~$\smax$ is endowed with the operations~$\oplus$,~$\odot$ and~$\ominus$, 
such that~$(\smax,\oplus,\odot)$ is an idempotent semiring,
with the symmetry~$a\mapsto \ominus a,\; \smax\to \smax$.
The symmetry satisfies that 
$$\ominus (\oplus a)=\ominus a\in \smax^\ominus,\text{ so }\ominus (\ominus a)
=\oplus a\in \smax^\oplus,\ \ \forall a\in \rmax.$$
Moreover, 
$$a\ominus b:=(\oplus a)\oplus (\ominus b)=(\ominus b)\oplus (\oplus a)
=\begin{cases}a&\text{if }\ a>b\enspace,\\ \ominus b&\text{if }\ b>a\enspace,\\ 
a^\circ&\text{if }\ a=b\enspace,\end{cases}\ \ \ \forall a,b\in \rmax.$$
With these properties, and denoting~$a^\circ:=a\ominus a$ for all $a\in \smax$, 
one can show that~$\ominus (a^\circ)=a^\circ$, that~$\smax^\circ$ is an ideal 
of~$\smax$ and that the map~$a\mapsto a^\circ$  is a morphism of~$\rmax$-semimodules.

The balance relation~$\nabla$ on $\smax$ is defined as $$a\nabla b\Leftrightarrow 
a\ominus b\in \smax^\circ.$$ As a result~$a\nabla \zero$ if and only if 
$a\in \smax^\circ$. Such an element $a$ is said singular,
and equivalently it is non invertible in $\smax$.
Note that the symmetry $\tau:a\mapsto \ominus a$ in $\smax$ is a sort
of extension of the usual symmetry of a ring.
In general $a\ominus a=a^\circ \neq \zero$, but it is singular.

In~\cite{LDTS,AGG14}, the relation $\succeq^\circ$ is also introduced
as follows:
$$a\succeq^\circ b\Leftrightarrow 
\exists c\in \smax, \; a=b\oplus c^{\circ}\enspace .$$
Let $a\in\smax$. We denote by $|a|$ the element $b\in\smax^\oplus$ such that 
$a^\circ=b^\circ.$

The symmetrized max-plus semiring is useful to deal with systems of linear equations 
over~$\mathbb{R}_{\max}$.


\subsubsection{Supertropical  max-plus semiring} \label{stmps}
In  recent years, tropical geometry is in constant  development and became  
the main interest of algebraic geometry groups such as  Itenberg, Mikhalkin and 
Shustin (see for instance \cite{TAG} and~\cite{TGA}). 
 Izhakian's extension (see ~\cite{TA} and ~\cite{STLA}) was inspired by the  
observation from these groups that tropical varieties are the corner locus of some 
set of tropical polynomials.

 This extension is obtained by constructing a second copy of~$\mathbb{R}$, called the 
``ghost" ideal of this structure, denoted~$\mathbb{R}^\nu$.
Then, the {\em supertropical extension of the tropical semiring} is defined as
$$\supertropical
:= \mathbb{R}\cup \mathbb{R}^\nu\cup \{-\infty\}\enspace ,$$
and endowed  with operations~$\oplus$,~$\odot$ 
such that~$(\supertropical, \oplus,\odot)$ is a semiring,
and such that the copy of~$a\in \R$ in~$\mathbb{R}^\nu$, denoted~$a^\nu$,
satisfies~$a^\nu=a\oplus a$ and is considered to be singular.
That is,~$\supertropical$ is not idempotent.
However, the~$\oplus$ operation also satisfies~$a\oplus a\oplus a=a\oplus a=a^\nu$ 
(see below), which means that singularity is obtained when a maximum is attained at least twice.

The above properties imply that the injective map $a\mapsto a, \rmax\to \supertropical$ is not
 a morphism of additive monoids, so of semirings.
To further define the semiring operations, let us define
$a^\nu:=a$ when $a\in \mathbb{R}^\nu\cup \{-\infty\}$,
and apply the usual order of $\mathbb{R}\cup \{-\infty\}$
in its copy $\mathbb{R}^\nu\cup \{-\infty\}$.
Then, $\oplus$ and $\odot$ are such that the map $a\mapsto a, \rmax\to \supertropical$ is a 
morphism of multiplicative monoids, the map $a\mapsto a^\nu,\; \supertropical \to 
\mathbb{R}^\nu\cup \{-\infty\}$ is a surjective morphism of semirings, and of $\rmax$-semimodules,
and the addition is adjusted with respect to the ``ghostness" of the elements:
 $$a\oplus b=b\oplus a=\begin{cases}a&\text{if } a^\nu>b^\nu\enspace ,\\  b&\text{if } 
b^\nu>a^\nu\enspace ,\\ a^\nu&\text{if } a^\nu=b^\nu\enspace .\end{cases}$$

On $\supertropical$, one also defines the relation~$\models$ as $$a\models b\Leftrightarrow a=
b\oplus c^\nu\quad\text{for some } c\in \mathbb{R}\cup \{-\infty\},$$  and in particular~$a^\nu
\models \zero,\ \forall a\in \supertropical$.

Consider on~$\supertropical$ the identity symmetry~$\tau(a)=a$. 
If the semiring was idempotent,
it would satisfy~$a\oplus \tau(a)=a$, which cannot be set as a singular element.
However, in~$\supertropical$, 
we have that~$a\oplus\tau(a)=a^\nu$ is in general non-zero but
is singular.

Let $a\in\supertropical$. We denote by $|a|$ the element $b\in\mathbb{R}_{\max}$ 
such that $a^\nu=b^\nu.$

The supertropical max-plus semiring  contributes to the understanding of tropical roots 
over~$\mathbb{R}_{\max}$ in the sense of tropical geometry.


\subsubsection{Unified theory}\label{sec-uni}

The above semirings are particular examples of semirings
with a symmetry. They enjoy some additional properties that are necessary 
to obtain the identities presented in  Sections~\ref{jacobi}and~\ref{oid}.
We shall present here a unified construction already presented 
in~\cite{AGG14}, which contains more examples.

 Let $\SS$ be a commutative semiring with a symmetry.
Let us denote by $\oplus$ its addition, by $\odot$ its multiplication,
by $\zero$ its zero, by $\unit$ its unit, and by $\ominus$ its
symmetry (which means that we write $\ominus a$ instead of $\tau(a)$).
Throughout, we use $a\ominus b$ for $a\oplus(\ominus b)$, and set $a^\circ:=
a\ominus a$. This implies that  $\ominus a^\circ=a^\circ$.
These notations coincide with the ones of Section~\ref{sec-symmmax}
when $\SS$ is the symmetrized max-plus semiring $\smax$.
When $\SS$ is the supertropical semiring $\supertropical$,
$\ominus$ has to be understood as $\oplus$ and $a^\circ$ coincides with
$a^\nu$. 
In the above semirings, an element $a^\circ$ is singular and non invertible,
but it is not necessarily~$\zero$. To generalize this, we consider 
in $\SS$, the subset 
\[
\SS^\circ := \{a^\circ \mid a\in \SS\}  \enspace .
\]

Let us denote by $\SS^*$ the set of invertible elements of $\SS$.
The set $\SS^\circ$ is an ideal,
hence, either  $\SS^\circ=\SS$ or $\SS^\circ$
contains no invertible element of $\SS$ ($\SS^*\subset \SS\setminus \SS^\circ$).
If $\SS$ is a totally ordered idempotent semiring, like $\rmax$, 
then the only symmetry on $\SS$ is the identity
(see~\cite[Prop.\ 2.11]{AGG14}), then $\SS^\circ= \SS$.
For any semiring $\SS$, we shall also consider the subset 
\[ \SS^\vee:=\SS^*\cup \{\zero\}\enspace .\]
Note that in~\cite{AGG14}, this notation was used,
when $\SS^\circ\neq \SS$, to denote any set 
such that $\SS^*\cup \{\zero\}\subset 
\SS^\vee \subset (\SS\setminus \SS^\circ )\cup\{\zero\}$,
the elements of which were called the \NEW{\thin} elements.
Here we restrict the value of $\SS^\vee$ while putting no constraint on 
$\SS$.
In the sequel, we shall say that an element of $\SS$ is nonsingular if 
it belongs to $\SS^*$, and that it is singular otherwise.
Then, if $\SS^\circ\neq \SS$, the elements of $\SS^\circ$ are necessarily
singular, and if $\SS$ equals $\supertropical$ or $\smax$,
these are the only ones.
Note however that if $\SS=\rmax$, $\zero$ is the only singular element.

In the sequel, we shall consider particularly semirings $\SS$
with a symmetry which are either totally ordered idempotent semifields
(that is such that $\SS\setminus\{\zero\}=\SS^*$ is a totally ordered group
and so $\SS=\SS^\vee=\SS^\circ$)
or semirings satisfying $\SS^\circ\neq \SS$ (so
$\SS^\vee \subset (\SS\setminus \SS^\circ )\cup\{\zero\}$),
and some additional
properties described below.

\begin{df}\label{orderdf} On any semiring $\SS$ with a symmetry, one defines the relations:
\[ a\preccurlyeq b \iff b\succcurlyeq a\iff b=a\oplus c \text{ for some } c\in \SS\enspace,\]
\[ a\preccurlyeq^\circ b \iff b\succcurlyeq^\circ a\iff b=a\oplus c \text{ for some } 
c\in \SS^\circ\enspace,\]
and 
\[ a\curlyeqprec b \iff b\curlyeqsucc a\iff b=a\oplus b \enspace.\]
\end{df}
The relations~$\preccurlyeq,\preccurlyeq^\circ$ are preorders (reflexive and transitive),
 compatible with the laws of~$\SS$. They may not be antisymmetric. 
The relation~$\curlyeqprec$ is  antisymmetric and transitive, compatible with the 
laws of~$\SS$ (if~$a\curlyeqprec b$, then~$ac\curlyeqprec bc$ ; if~$a\curlyeqprec b$ 
and~$a'\curlyeqprec b'$, then~$a\oplus a'\curlyeqprec b\oplus b'$). It is reflexive when~$\SS$ is 
idempotent. Obviously, the relation~$\curlyeqprec$ implies the relation~$\preccurlyeq$.

Note that both~$\preccurlyeq$ and~$\curlyeqprec$ are such that
all elements of~$\SS$ are nonnegative, that is~$a\preccurlyeq\zero$
and~$a\curlyeqprec\zero$ for all~$a\in \SS$.

We apply these relations to matrices (and in particular to vectors) entry-wise, 
and to polynomials coefficient-wise.

\begin{df} 
A semiring~$\SS$ is said to be \NEW{naturally ordered} when~$\preccurlyeq$ 
(or equivalently~$\succcurlyeq$) is an order relation, and 
in that case~$\preccurlyeq$ is called the \NEW{natural order} on~$\SS$, 
and~$\succcurlyeq$ is its opposite order. 
\end{df}
When $\preccurlyeq$ (or $\succcurlyeq$) is an order relation, so are 
$\preccurlyeq^\circ$ and $\succcurlyeq^\circ$, and their extensions to matrices 
and polynomials.
When~$\SS$ is an idempotent semiring,~$\preccurlyeq$ is equal to~$\curlyeqprec$,~$\SS$ 
is necessarily naturally ordered, and 
a naturally ordered semiring is necessarily zero-sum free.

\begin{df}
When $\SS$ is naturally ordered and $(a_n)_{n\geq 0}$ is a nondecreasing 
(resp.\ nonincreasing) sequence
of scalars, matrices or polynomials over $\SS$, we say that
the sequence converges towards $a$, if the supremum (resp.\ infimum)
of the $a_n$ exists and is equal to $a$.
\end{df}

\begin{df}
Let $\SS$ be a semiring and $\Mod$ be a totally ordered idempotent semiring.
We say that a map $\mu:\SS\to \Mod$ is a \NEW{modulus} if it is a 
surjective morphism of semirings.
In this case, we denote $\mu(a)$ by $|a|$ for all $a\in \SS$,
and for $a,b\in \SS$, we say that $b$ dominate $a$ when $|a|\preceq |b|$. 
\end{df}

The absolute value is applied to matrices (and in particular to vectors) entry-wise, 
and to polynomials coefficient-wise.
A modulus on a semiring $\SS$ with a symmetry  satisfies necessarily 
 $|\ominus a|=|a|$ and so $ |a^\circ|=|a|$ for all $a\in \SS$
(see~\cite[Prop.\ 2.11]{AGG14}).
In particular, $\mu=\mu|_{\SS^\circ}\circ c$ where $c:\SS\to\SS^\circ$ is the
map $a\mapsto a^\circ$.
If $\SS^\circ$ is already idempotent and totally ordered, then 
the map $c$ is a modulus. This is the case when
$\SS=\smax$ or $\SS=\supertropical$,
and indeed in these cases, one considered a modulus map 
with values in $\rmax$, such that $\mu|_{\SS^\circ}$ is an isomorphism.
This is also the case when $\SS$ is already an 
idempotent and totally ordered semiring, in which case
$c$ is the identity.
On these semirings, 
we have the following three properties. Moreover,
Property~\ref{prop1} holds on any
semiring of the form $\skewproductstar{\SS'}{\Mod}$ as in~\cite{AGG14}.
\begin{prop}\label{prop1} $\text{If }\ a,b\in\SS\text{ such that }\ 
|a|\prec|b|\;\Rightarrow\; a\oplus b=b$.
\end{prop}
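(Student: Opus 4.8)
The plan is to reduce the statement to the way addition interacts with the modulus, and to treat separately the two families of semirings isolated just above the proposition. First I would record the only fact that comes for free: since $\mu$ is a morphism of semirings and $\Mod$ is idempotent, the hypothesis $|a|\prec|b|$ gives $|a\oplus b|=|a|\oplus|b|=|b|$. This pins down the modulus of $a\oplus b$ but \emph{not} $a\oplus b$ itself, so the morphism property alone is not enough. The real content to be extracted is that in each of our semirings the addition is \emph{dominated by the modulus}: a summand of strictly smaller modulus is absorbed.

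Next I would dispose of the totally ordered idempotent semifield case (in particular $\SS=\rmax$). Here the symmetry is the identity and $a^\circ=a\ominus a=a\oplus a=a$, so the map $c\colon a\mapsto a^\circ$ is the identity and may be taken as the modulus; thus $|a|=a$, $|b|=b$, and $|a|\prec|b|$ is literally $a\prec b$ for the natural order. Since $\SS$ is idempotent, $\preccurlyeq$ coincides with $\curlyeqprec$, so $a\prec b$ means exactly $a\oplus b=b$, which is the desired conclusion.

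For the remaining semirings I would read the conclusion directly off the explicit addition rules. When $\SS=\smax$ or $\SS=\supertropical$, the addition tables displayed in Sections~\ref{sec-symmmax} and~\ref{stmps} show that whenever one summand has strictly larger modulus it alone determines the sum; hence $|a|<|b|$ forces $a\oplus b=b$ irrespective of the signs (for $\smax$) or the ghostness (for $\supertropical$) of $a$ and $b$. For the general unified construction $\skewproductstar{\SS'}{\Mod}$ of~\cite{AGG14}, this absorption is built into the definition of the product and is recorded there; it is essentially \cite[Prop.\ 2.11]{AGG14}, which I would simply invoke.

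The main obstacle is conceptual rather than computational: one must recognize that modulus-dominance is an extra structural axiom, not a consequence of $\mu$ being a semiring morphism, and then confirm that it holds uniformly across the concrete extensions $\smax$, $\supertropical$ and the abstract $\rtimes$-construction. Once this observation is in place, each case reduces to a one-line verification against the governing addition rule.
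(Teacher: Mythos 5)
Your verification is correct, and it is worth noting that the paper itself offers no proof of Property~\ref{prop1}: it is posited axiomatically, as one of the defining requirements of the semirings later denoted $\TT$, and is merely asserted to hold on the concrete instances (totally ordered idempotent semifields, $\smax$, $\supertropical$, and the $\skewproductstar{\SS'}{\Mod}$ construction of~\cite{AGG14}). Your case-by-case check supplies exactly the verification the paper leaves implicit, and each step is sound: the reduction of the idempotent-semifield case to $\preccurlyeq\,=\,\curlyeqprec$, the reading of the displayed addition rules for $\smax$ and $\supertropical$, and the deferral to~\cite{AGG14} for the general construction all match what the paper intends. Your opening observation that the morphism property of $\mu$ only yields $|a\oplus b|=|b|$, and that absorption is genuinely an extra structural hypothesis, is precisely why the authors list this as a Property rather than deriving it.
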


\begin{prop}\label{prp1}
$\text{If }\ a,b\in\SS\text{ such that }b\succeq a,\ |a|=|b|\ \text{and }\ b\in \SS^{\vee}
\text{, then }a=b\enspace.$
\end{prop}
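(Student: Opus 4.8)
The plan is to unfold the hypothesis $b\succcurlyeq a$ (the natural order of Definition~\ref{orderdf}, that is, $b=a\oplus c$ for some $c\in\SS$) and then to extract information about $c$ by pushing everything through the modulus. Since $\mu$ is a morphism of semirings, $|b|=|a\oplus c|=|a|\oplus|c|$, and because $\Mod$ is totally ordered and idempotent this sum is just the larger of $|a|$ and $|c|$. The assumption $|a|=|b|$ therefore forces $|c|\preceq|a|$. Thus, apart from the degenerate case $b=\zero$, the argument splits into the two regimes $|c|\prec|a|$ and $|c|=|a|$, and I expect all the difficulty to sit in the equality regime, where the nonsingularity hypothesis $b\in\SS^\vee$ must be invoked.

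First I would dispose of the case $b=\zero$. Then $a\oplus c=\zero$, and since each of the semirings under consideration is zero-sum free (being naturally ordered in the idempotent cases, and directly from the addition rule in $\supertropical$), this forces $a=\zero=b$. For a totally ordered idempotent semifield, where $\SS=\SS^\vee=\SS^\circ$ and the canonical modulus is the identity, the whole statement is in fact immediate, since then $|a|=|b|$ already gives $a=b$; so from now on I focus on $\smax$ and $\supertropical$ and assume $b\in\SS^*$.

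When $b$ is invertible, applying $\mu$ to $bb^{-1}=\unit$ shows that $|b|$ is invertible in $\Mod$, hence $|a|=|b|\neq\zero$. If $|c|\prec|a|$, then Property~\ref{prop1}, applied to the pair $(c,a)$, gives $c\oplus a=a$, so that $b=a\oplus c=a$ as required. There remains the critical case $|c|=|a|=|b|$, and this is where $b\in\SS^\vee$ enters decisively, because two elements sharing the same nonzero modulus have a constrained sum. In $\supertropical$, coinciding moduli force $a\oplus c$ to be a ghost element, hence singular, contradicting the invertibility of $b=a\oplus c$; so this case cannot occur at all, and the inequality regime already handles every invertible $b$. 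In $\smax$, writing $t=|a|=|c|$ puts $a$ and $c$ in $\{\oplus t,\ominus t,t^\circ\}$, and a glance at the addition table shows that $a\oplus c$ is nonsingular only for $a=c=\oplus t$ or $a=c=\ominus t$; in either case $a=c=a\oplus c=b$.

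The main obstacle is precisely this borderline $|c|=|a|$. The inequality regime is disposed of cleanly and uniformly by Property~\ref{prop1}, but the boundary behaves oppositely in the two extensions: it is vacuous in the non-idempotent semiring $\supertropical$, because coinciding moduli manufacture ghosts, yet it is genuinely attained in the idempotent semiring $\smax$, where one must read off from the addition that only the ``aligned'' decompositions survive the nonsingularity of $b$. The role of the hypothesis $b\in\SS^\vee$ is exactly to exclude the singular values of $a\oplus c$; that it cannot be dropped is seen from $b=t^\circ,\ a=\oplus t$ in $\smax$, where $b\succcurlyeq a$ and $|a|=|b|$ both hold but $a\neq b$.
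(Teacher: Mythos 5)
Your argument is correct, but it takes a genuinely different route from the paper's. The paper never verifies Property~\ref{prp1} directly on the concrete semirings: it is stated as a standing \emph{property} that the semirings under consideration are asserted to enjoy, and the only justification offered is the remark following Proposition~\ref{prop2}, which derives it abstractly for any naturally ordered semiring from the interpolation property~\eqref{5.2} and the thin-element rigidity property~\eqref{5.4} of~\cite{AGG14} (produce $c\in\SS^\vee$ with $c\preccurlyeq a\preccurlyeq b$, $c\balance a$, $|c|=|a|=|b|$, conclude $c=b$ by~\eqref{5.4}, then squeeze $a=b$ by antisymmetry). You instead verify the property hands-on in each of the three relevant structures, by writing $b=a\oplus c$, pushing through the modulus to get $|c|\preccurlyeq|a|$, handling $|c|\prec|a|$ uniformly via Property~\ref{prop1}, and disposing of the borderline $|c|=|a|$ by the addition tables of $\supertropical$ (where it is vacuous, since equal moduli produce ghosts) and $\smax$ (where only the aligned signs survive nonsingularity of $b$). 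Your version is self-contained and actually certifies that the semirings named in Section~\ref{sec-uni} satisfy the property, which the paper leaves implicit; the paper's version buys generality, applying to any naturally ordered semiring satisfying~\eqref{5.2} and~\eqref{5.4} rather than only to the listed examples. Your closing counterexample ($b=t^\circ$, $a=\oplus t$ in $\smax$) showing that $b\in\SS^\vee$ cannot be dropped is a worthwhile addition not present in the paper.
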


\begin{cor}\label{cormoins}
 $\text{If }\ a,b\in\SS\text{ such that }a\curlyeqprec \ominus b\ \text{and }\ b\in \SS^{\vee}\text{, then }a=\ominus b
\text{ or }a\curlyeqprec  b.$\end{cor}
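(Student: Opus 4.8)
The plan is to transfer the total-order dichotomy available on the modulus semiring $\Mod$ back to $\SS$, using Property~\ref{prop1} in the strict case and Proposition~\ref{prp1} in the equality case. First I would unfold the hypothesis: since the relation $\curlyeqprec$ implies $\preccurlyeq$, the assumption $a\curlyeqprec\ominus b$ gives simultaneously the equation $\ominus b=a\oplus(\ominus b)$ and the comparison $\ominus b\succcurlyeq a$. Applying the modulus $|\cdot|$, which is a surjective morphism of semirings and hence preserves $\oplus$, to the equation yields $|\ominus b|=|a|\oplus|\ominus b|$; using $|\ominus b|=|b|$ (the modulus is invariant under the symmetry, by~\cite[Prop.\ 2.11]{AGG14}), this reads $|b|=|a|\oplus|b|$, that is $|a|\preceq|b|$ in the totally ordered idempotent semiring $\Mod$.

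Before splitting into cases I would verify that $\ominus b\in\SS^\vee$ whenever $b\in\SS^\vee=\SS^*\cup\{\zero\}$, since this membership is needed to invoke Proposition~\ref{prp1}. If $b=\zero$, then $\ominus b=\zero$ by axiom~(2) of a symmetry, so $\ominus b\in\SS^\vee$. If instead $b\in\SS^*$, then from axiom~(3) one has $(\ominus\unit)x=\ominus x$ for every $x$, whence $(\ominus\unit)(\ominus\unit)=\ominus(\ominus\unit)=\unit$ by axiom~(4); thus $\ominus\unit$ is its own inverse, and $\ominus b=(\ominus\unit)b$ is a product of invertible elements, so $\ominus b\in\SS^*\subseteq\SS^\vee$. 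In both cases $\ominus b\in\SS^\vee$.

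Finally, since $\Mod$ is totally ordered, exactly one of $|a|=|b|$ or $|a|\prec|b|$ holds. In the first case I would note $|a|=|b|=|\ominus b|$, together with $\ominus b\succcurlyeq a$ and $\ominus b\in\SS^\vee$, so Proposition~\ref{prp1} (applied with $\ominus b$ in the role of the dominating thin element) yields $a=\ominus b$. In the second case $|a|\prec|b|$, so Property~\ref{prop1} gives $a\oplus b=b$, which is precisely $a\curlyeqprec b$ by the definition of $\curlyeqprec$ in Definition~\ref{orderdf}. This establishes the stated dichotomy. The argument is essentially routine once these three ingredients are assembled; the only step requiring genuine care is the stability of $\SS^\vee$ under the symmetry, i.e.\ that $\ominus$ preserves invertibility, which is where I expect the (mild) obstacle to lie.
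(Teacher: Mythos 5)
Your proof is correct and follows essentially the same route as the paper's: establish that $\ominus b\in\SS^\vee$ because $\ominus\unit\in\SS^*$, then apply Property~\ref{prop1} in the case $|a|\prec|b|$ to get $a\curlyeqprec b$, and Proposition~\ref{prp1} (via $a\preccurlyeq\ominus b$) in the case $|a|=|b|$ to get $a=\ominus b$. The only difference is that you spell out two points the paper leaves implicit — that $|a|\preceq|b|$ follows from the hypothesis so the two cases are exhaustive, and the verification that $\ominus$ preserves $\SS^\vee$ — which is a mild gain in completeness, not a different argument.
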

\begin{proof}From Property~\ref{prop1}, if~$|a|\prec|\ominus b|=|b|$, then~$a\curlyeqprec b.$ Since $\ominus \unit\in \SS^*$,  $b\in \SS^{\vee}$ implies $\ominus b\in \SS^{\vee}$. Then, since 
$a\curlyeqprec \ominus b$ implies $a\preccurlyeq \ominus b$, we get from Property~\ref{prp1} that
 if~$|a|=|\ominus b|$, then~$a= \ominus b.$\end{proof}

The following consequence will also be useful.
\begin{pro}\label{prop2} If $\SS$ satisfies $ \SS^\circ\neq \SS$ and 
Property~\ref{prp1}, then 
\[a,b\in\SS\text{ such that }\ b\succeq^\circ a\ \text{and }\ b\in \SS^{\vee}\;
\text{implies }\; a=b\enspace .\]
\end{pro}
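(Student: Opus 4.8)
The plan is to reduce the statement to a double application of Property~\ref{prp1}, using the modulus to control the ``size'' of the correcting term. First I would unfold the hypothesis $b \succeq^\circ a$: by Definition~\ref{orderdf} it means $b = a \oplus c$ for some $c \in \SS^\circ$. In particular this simultaneously exhibits $b \succeq a$ and $b \succeq c$ for the natural order, which is exactly the kind of domination that Property~\ref{prp1} consumes. The goal $a = b$ will follow from Property~\ref{prp1} applied to the pair $(a,b)$ as soon as I can certify that $|a| = |b|$, so the whole difficulty is concentrated in comparing these two moduli.

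Next I would pass to the modulus $|\cdot|\colon \SS \to \Mod$. Since it is a semiring morphism into a totally ordered idempotent semiring, $|b| = |a \oplus c| = |a| \oplus |c|$, and the total order of $\Mod$ turns this into $|b| = \max(|a|,|c|)$. This leaves exactly two cases: either $|b| = |a|$, or $|b| = |c|$ with $|a| \prec |c|$. In the first case I am done at once, since $b \succeq a$, $|a| = |b|$ and $b \in \SS^\vee$ are precisely the hypotheses of Property~\ref{prp1}, yielding $a = b$.

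The main obstacle is the second case, $|a| \prec |c| = |b|$, and the key idea is to apply Property~\ref{prp1} not to $(a,b)$ but to $(c,b)$. Indeed here $b \succeq c$, $|c| = |b|$, and $b \in \SS^\vee$, so Property~\ref{prp1} forces $c = b$; since $c \in \SS^\circ$, this puts $b \in \SS^\circ$. Now the hypothesis $\SS^\circ \neq \SS$ enters: it guarantees that $\SS^\circ$ contains no invertible element, i.e. $\SS^\circ \cap \SS^* = \emptyset$. As $b \in \SS^\vee = \SS^* \cup \{\zero\}$, the only surviving possibility is $b = \zero$; but then $|b| = \zero$ contradicts $|b| = |c| \succ |a| \succeq \zero$. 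This rules out the second case entirely, so only $|a| = |b|$ remains and the proof closes.

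I expect the point worth stating carefully to be precisely this reuse of Property~\ref{prp1} on the pair $(c,b)$: it is what lets the argument run on Property~\ref{prp1} alone, together with $\SS^\circ \neq \SS$, rather than requiring the stronger Property~\ref{prop1}. I would also sanity-check the degenerate reading $b = \zero$, but it is automatically absorbed, since $\max(|a|,|c|) = |b| = \zero$ forces $|a| = \zero = |b|$ and lands back in the first case.
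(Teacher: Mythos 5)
Your proof is correct and follows essentially the same route as the paper's: reduce to the modulus, split on whether $|a|=|b|$ or $|a|\prec|b|$, and in the latter case apply Property~\ref{prp1} to the pair $(c,b)$ to force $b=c\in\SS^\circ\cap\SS^\vee\setminus\{\zero\}=\emptyset$, a contradiction. The only cosmetic difference is that the paper writes the correcting term as $c^\circ$ rather than $c\in\SS^\circ$, which changes nothing.
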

\begin{proof}
Assume $b\succeq^\circ a$
and $b\in \SS^{\vee}$. Then, $a\preceq b$ and $|a|\preceq |b|$. 
If $|a|= |b|$, then using Property~\ref{prp1}, 
and $b\in \SS^{\vee}$, we get $a=b$. 
Otherwise $|a|\prec|b|$, so $b\neq \zero$ and since $b=a\oplus c^\circ$, we get 
$|b|=|c^\circ|$. Then, due to Property~\ref{prp1}, we obtain
$c^\circ=b\in \SS^\circ\cap \SS^{\vee}\setminus\{\zero\}=\emptyset$, 
a contradiction.
\end{proof}

\begin{rem}
In \cite{AGG14}, some different properties were considered, which imply
the above ones.
For instance, $\SS^\vee$ satisfies Property 5.2 of  \cite{AGG14} if 
\begin{equation}\label{5.2}
 (x\in \SS^{\vee}\text{ and } x\preceq y)
\implies\;\exists z\in \SS^{\vee} \text{ such that }
x\preceq z\preceq y,\; z \balance y,\;\text{and}\; |z|=|y|\enspace, \end{equation}
and it satisfies Property 5.4 of  \cite{AGG14} if
\begin{equation}\label{5.4}
 ( x,y\in \SS^{\vee},\;  x \preceq  y \text{ and } |x|=|y|) \implies x=y
\enspace. \end{equation}
When $\SS$ is naturally ordered, these two properties imply Property~\ref{prp1}.
Indeed, if $b\succeq a$, $|a|=|b|$, and $b\in \SS^{\vee}$, 
from~\eqref{5.2}, $\exists c\in  \SS^{\vee}\text{ s.t.~}c\preceq a\preceq b,\ c\nabla a,\ 
|a|=|b|=|c|$. Then, since $b,c \in\SS^{\vee}$,
from~\eqref{5.4} we obtain $c=b$ and therefore $a=b$.
\end{rem}

The following property will also be needed.

\begin{prop}\label{propnew} $\SS^\circ$ is idempotent, that is
for all $a\in \SS$, $a^\circ \oplus a^\circ=a^\circ$.
\end{prop}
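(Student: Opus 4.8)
The goal is the single identity $a^\circ\oplus a^\circ=a^\circ$ for arbitrary $a\in\SS$. My first move is a purely formal reduction using the symmetry axioms: since $\ominus$ is additive, $\ominus(a\oplus a)=\ominus a\oplus\ominus a$, so regrouping gives
\[
a^\circ\oplus a^\circ=(a\oplus\ominus a)\oplus(a\oplus\ominus a)=(a\oplus a)\oplus\ominus(a\oplus a)=(a\oplus a)^\circ.
\]
The same computation with $b$ in place of the second $a$ yields $a^\circ\oplus b^\circ=(a\oplus b)^\circ$, which in particular re-proves that $\SS^\circ$ is closed under $\oplus$, so that $a^\circ\oplus a^\circ$ is again an element of $\SS^\circ$. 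Thus the claim is equivalent to $(a\oplus a)^\circ=a^\circ$, i.e.\ to the statement that the map $c:x\mapsto x^\circ$ does not separate $a$ from $a\oplus a$.

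Next I would pass to the modulus $\mu:\SS\to\Mod$ into the totally ordered idempotent semiring $\Mod$, writing $|x|=\mu(x)$. Since $\mu$ is a semiring morphism and $\Mod$ is idempotent,
\[
\bigl|(a\oplus a)^\circ\bigr|=|a\oplus a|=|a|\oplus|a|=|a|=|a^\circ|.
\]
Hence $(a\oplus a)^\circ$ and $a^\circ$ are two elements of the ideal $\SS^\circ$ with the same modulus (and $a^\circ\preccurlyeq(a\oplus a)^\circ$ holds trivially, the right-hand side being $a^\circ\oplus a^\circ$).

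The concluding step, and the place where the structural hypotheses really enter, is to upgrade ``same modulus inside $\SS^\circ$'' to ``equal.'' Here I would split according to the two families of semirings under consideration. If $\SS$ is a totally ordered idempotent semifield then $\SS=\SS^\circ$ and the addition is idempotent, so $a^\circ\oplus a^\circ=a^\circ$ is immediate; this also covers $\smax$, which is idempotent. In the remaining case $\SS^\circ\neq\SS$ I would use that the restriction $\mu|_{\SS^\circ}:\SS^\circ\to\Mod$ is injective — indeed, for the symmetrized and supertropical semirings it is an isomorphism onto $\Mod\cong\rmax$, which is exactly the assertion that $c$ is a modulus recalled just before the proposition. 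Injectivity of $\mu|_{\SS^\circ}$ together with the modulus computation above forces $(a\oplus a)^\circ=a^\circ$, completing the argument.

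The main obstacle is precisely this last step. The antisymmetry-type results available to me, namely Property~\ref{prp1} and Proposition~\ref{prop2}, are stated for \thin elements $b\in\SS^{\vee}$, and therefore cannot be applied to the singular element $a^\circ\in\SS^\circ$; one cannot simply quote them. Instead one must exploit the concrete description of $\SS^\circ$ as the copy of $\rmax$ formed by the balanced, respectively ghost, elements — either through injectivity of $\mu|_{\SS^\circ}$ as above, or, in $\supertropical$, by the direct computation $a^\nu\oplus a^\nu=a^\nu$ coming from $a\oplus a\oplus a=a\oplus a$.
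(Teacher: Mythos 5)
Your argument is sound as far as it goes, but you should be aware that the paper does not prove this statement at all: it is stated in a \texttt{Property} environment, and it functions as an \emph{axiom} --- it is one of the hypotheses bundled into the definition of the class $\TT$ (``\,$\SS$ \ldots satisfies Properties~\ref{prop1}, \ref{prp1} and~\ref{propnew}\,''), and the paper merely observes that it holds on the semirings of interest. The only proved item in its vicinity is Lemma~\ref{lem-propnew}, which establishes the equivalence with $a\oplus a^\circ=a^\circ$. So there is no proof to match yours against; what you have written is a verification that the axiom is satisfied in the concrete models, which is exactly what the paper leaves implicit. Your verification is correct: the reduction $a^\circ\oplus a^\circ=(a\oplus a)^\circ$ is a valid formal consequence of the symmetry axioms, the modulus computation $|(a\oplus a)^\circ|=|a^\circ|$ is right, the idempotent case ($\Mod$, $\smax$) is immediate, and in $\supertropical$ the identity $a^\nu\oplus a^\nu=a^\nu$ follows directly from the addition table. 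You are also right, and it is the key observation, that Property~\ref{prp1} and Proposition~\ref{prop2} are useless here because they require the dominating element to lie in $\SS^{\vee}$, which $a^\circ$ never does when $\SS^\circ\neq\SS$.

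The one caution concerns your ``concluding step'' in the general case $\SS^\circ\neq\SS$: injectivity of $\mu|_{\SS^\circ}$ is \emph{not} part of the definition of a modulus, nor of the general framework of semirings with symmetry; it happens to hold for $\smax$ and $\supertropical$ because there $\mu|_{\SS^\circ}$ is an isomorphism onto $\rmax$. For an arbitrary semiring with symmetry and modulus the property genuinely can fail (which is precisely why the authors postulate it rather than derive it), so your proof should be presented as a verification for the listed examples, not as a theorem about the general class.
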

\begin{lem}\label{lem-propnew} Property~\ref{propnew} is equivalent to
the condition: $a\oplus a^\circ=a^\circ$, for all $a\in \SS$. It implies 
 $\ominus a\oplus a^\circ$, for all $a\in \SS$.
\end{lem}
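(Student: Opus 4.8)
The plan is to prove the two stated characterizations equivalent by treating each implication separately, and then to read off the final assertion in one line from the second characterization. Throughout I would lean on the elementary identity $(\ominus a)^\circ=a^\circ$, which holds in any semiring with symmetry: since $\ominus\ominus a=a$ and $\oplus$ is commutative, $(\ominus a)^\circ=\ominus a\oplus\ominus(\ominus a)=\ominus a\oplus a=a^\circ$. The easy direction is to show that the condition $a\oplus a^\circ=a^\circ$ (for all $a$) forces Property~\ref{propnew}; this part is purely algebraic and uses no order. Applying the hypothesis once to $a$ and once to $\ominus a$ (and using $(\ominus a)^\circ=a^\circ$) yields $a\oplus a^\circ=a^\circ$ and $\ominus a\oplus a^\circ=a^\circ$, after which the regrouping
\[
a^\circ\oplus a^\circ=(a\oplus\ominus a)\oplus a^\circ=a\oplus(\ominus a\oplus a^\circ)=a\oplus a^\circ=a^\circ
\]
gives exactly the idempotency asserted in Property~\ref{propnew}.

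The harder direction is to recover the condition from Property~\ref{propnew}, and here the natural order is indispensable: in a ring with $\tau(a)=-a$ one has $a^\circ=\zero$, so Property~\ref{propnew} holds trivially while $a\oplus a^\circ=a\neq a^\circ$ in general. The essential point to exploit is that the semirings under consideration are naturally ordered, so $\preccurlyeq$ is antisymmetric, and I would argue by a two-sided inequality. From $a\oplus a^\circ=a^\circ\oplus a$ the definition of $\preccurlyeq$ gives $a^\circ\preccurlyeq a\oplus a^\circ$ immediately. For the reverse, the key regrouping
\[
a^\circ=a^\circ\oplus a^\circ=(a\oplus\ominus a)\oplus(a\oplus\ominus a)=(a\oplus a^\circ)\oplus\ominus a
\]
exhibits $a^\circ$ as $(a\oplus a^\circ)$ plus a further term, hence $a\oplus a^\circ\preccurlyeq a^\circ$. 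Antisymmetry of $\preccurlyeq$ then forces $a\oplus a^\circ=a^\circ$, completing the equivalence.

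Finally, granted the now-established condition $a\oplus a^\circ=a^\circ$, the last assertion $\ominus a\oplus a^\circ=a^\circ$ follows by applying the condition to $\ominus a$ in place of $a$ and invoking $(\ominus a)^\circ=a^\circ$, namely $\ominus a\oplus a^\circ=\ominus a\oplus(\ominus a)^\circ=(\ominus a)^\circ=a^\circ$. The main obstacle is the second implication: one must recognize that it cannot hold by algebra alone in full generality, and that the single regrouping $a^\circ\oplus a^\circ=(a\oplus a^\circ)\oplus\ominus a$, combined with the antisymmetry of the natural order, is precisely what closes the gap.
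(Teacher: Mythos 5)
Your proof is correct and follows essentially the same route as the paper's: the sandwich $a^\circ\preccurlyeq a\oplus a^\circ\preccurlyeq a^\circ\oplus a^\circ=a^\circ$ for the forward direction, the regrouping $a^\circ\oplus a^\circ=a\oplus(\ominus a\oplus a^\circ)=a\oplus a^\circ$ for the converse, and substitution of $\ominus a$ for $a$ (with $(\ominus a)^\circ=a^\circ$) for the final assertion. Your explicit observation that the forward direction requires antisymmetry of $\preccurlyeq$ (natural order) is a useful clarification, since the paper's proof relies on it tacitly and the lemma is indeed only applied in the naturally ordered setting $\TT$.
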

\begin{proof}
Since $a^\circ\preccurlyeq  a\oplus  a^\circ\preccurlyeq  a^\circ\oplus  a^\circ$,
Property~\ref{propnew} implies that, for all $a\in \SS$,
$a\oplus a^\circ=a^\circ$. Since $(\ominus a)^\circ=a^\circ$, the latter property
implies that, for all $a\in \SS$, $\ominus a\oplus a^\circ$.
Conversely, if,  for all $a\in \SS$,
$a\oplus a^\circ=a^\circ$, then,  for all $a\in \SS$, 
$\ominus a\oplus a^\circ=a^\circ$, and so 
$a^\circ\oplus  a^\circ= a\oplus (\ominus a)\oplus a^\circ= a\oplus  a^\circ=a^\circ$.
\end{proof}

\subsection{Tropical matrix algebra}\label{tma}
Throughout the following sections, we shall consider a
commutative semiring with a symmetry, denoted by $\SS$,
which may have some additional properties, like having a modulus.
To simplify the presentation, we shall say that $\SS=\TT$,
and denote $\SS$ by $\TT$, if $\SS$ is naturally ordered,
has a modulus taking its values in a
totally ordered idempotent semifield $\Mod$, and satisfies
Properties~\ref{prop1}, \ref{prp1} and~\ref{propnew} (although the
latter property is not needed in the present  section).
Similarly, we shall say that $\SS=\Mod$, and denote $\SS$ by 
$\Mod$, if $\SS$ is a totally ordered idempotent semifield
with its (unique) identity symmetry and the identity modulus.
Following the notations of the previous section, we formulate some  basic 
definitions. One may also find in~\cite{MA} further combinatorial motivation for 
the objects discussed.

\begin{df}
The \myemph{trace} of $A=(a_{i,j})\in \SS^{n\times n}$ is defined as $$\tr(A)=
\bigoplus_{i\in[n]} a_{i,i},$$ and if $\SS$ has a modulus,  we refer to any diagonal 
entry  with highest absolute value as \myemph{a dominant diagonal entry}.
\end{df}

\begin{rem}\label{comtr} Denote $B=(b_{i,j})$. As usual, $\tr(AB)=\tr(BA),$ since 
$$\bigoplus_{i\in[n]}\bigoplus_{t\in[n]}a_{i,t}b_{t,i}=\bigoplus_{t\in[n]}\bigoplus_{i\in[n]}b_{t,i}a_{i,t}.$$  
\end{rem}

\begin{df}\label{EP}  Let $\SS$ be fixed.
The \myemph{sign} of a bijection~$\sigma\in \allperm_{I,J}$ is~$\sign(\sigma)=
(\ominus\unit)^{|\inv(\sigma)|}\in \SS,$ where   $$\inv(\sigma)=\{(i,j)\in I^2:
\ i<j\ \text{and}\ \sigma(i)>\sigma(j)\}$$ is the set of inversions in~$\sigma,$  
taken with respect to the orders in~$I$ and~$J$.

Similarly to~\eqref{cycfact}, we call \myemph{signed permutation (resp.~signed bijection)}
 of a permutation~$\pi$ of~$I$ (resp.~a bijection~$\pi:I\rightarrow J$) the expression~$\sign(\pi)
\bigodot_{i\in I} a_{i,\pi(i)}.$
Recall that a cycle is a permutation on the set of its  indices (resp.~an elementary open path is a 
bijection from the set of its left indices to the set of its right indices). It is well-known that a 
signed permutation is the product of its signed cycles. However, a
signed bijection is not the product of its signed cycles and signed elementary open paths (e.g.~$\sign((1\ 4)(2))\ne\sign(1\ 4)\sign(2)$). 
Note that a path can be decomposed into  an elementary path of the same source and 
target and  (not necessarily disjoint) cycles,  starting and ending at the points 
of repeating indices. 

\end{df}

It is a well known fact that~$(\ominus\unit)^{|\inv(\sigma)|}=(\ominus\unit)^{|\tran(\sigma)|},$ 
for all $\sigma\in \allperm_{I}$, where $\tran(\sigma)$ denotes the number of transpositions  
which the permutation is factored into.

\begin{prop}\label{signs}
Let~$\sigma\in\allperm_{I, J},\ I,J\subseteq S$.  For~$\pi\in \allperm_{S}$, 
the unique bijection~$\rho\in\allperm_{\pi[I], \sigma[I]}$ such that~$\rho\circ\pi(i)=\sigma(i),\ 
\forall i\in I,$ satisfy~$\ \sign(\sigma)=\sign(\pi|_I)\sign(\rho).$

\end{prop}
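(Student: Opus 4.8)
The plan is to reduce the statement to a parity computation on inversion sets, using that the sign of a bijection depends only on the parity of its number of inversions. This is because $(\ominus\unit)^2=\unit$: from the symmetry axioms (3) and (4) one has $(\ominus\unit)(\ominus\unit)=\tau(\unit)\tau(\unit)=\tau(\unit\cdot\tau(\unit))=\tau(\tau(\unit))=\unit$, so $(\ominus\unit)^m$ depends only on $m\bmod 2$, and the two factors $\sign(\pi|_I),\sign(\rho)\in\{\unit,\ominus\unit\}$ commute. Hence it suffices to establish the congruence
$$|\inv(\sigma)|\equiv |\inv(\pi|_I)|+|\inv(\rho)| \pmod 2,$$
after which $\sign(\sigma)=(\ominus\unit)^{|\inv(\sigma)|}=(\ominus\unit)^{|\inv(\pi|_I)|}(\ominus\unit)^{|\inv(\rho)|}=\sign(\pi|_I)\sign(\rho)$ follows immediately.

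First I would record that $\rho$ is well defined and unique: since $\pi|_I\in\allperm_{I,\pi[I]}$ is a bijection, the equation $\rho\circ\pi|_I=\sigma$ forces $\rho=\sigma\circ(\pi|_I)^{-1}\in\allperm_{\pi[I],\sigma[I]}$, and conversely this $\rho$ satisfies $\rho(\pi(i))=\sigma(i)$ for all $i\in I$. Thus $\sigma=\rho\circ\pi|_I$ is a composite of bijections $I\to\pi[I]\to\sigma[I]$, the three sets being totally ordered by the order of $S$.

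The combinatorial heart is a pairwise bookkeeping that matches inversions across these three orders. The bijection $\pi|_I$ induces a bijection between the two-element subsets of $I$ and those of $\pi[I]$. Fix a pair $\{i,i'\}\subseteq I$ with $i<i'$ and set $p=\pi(i)$, $p'=\pi(i')$, $s=\sigma(i)=\rho(p)$, $s'=\sigma(i')=\rho(p')$. Let $x=1$ if $p>p'$ and $x=0$ otherwise (so $x$ records whether $\{i,i'\}$ contributes to $\inv(\pi|_I)$), let $z=1$ if $s>s'$ and $z=0$ otherwise (contribution to $\inv(\sigma)$), and let $y=1$ if the corresponding pair $\{p,p'\}$ of $\pi[I]$ contributes to $\inv(\rho)$ and $y=0$ otherwise. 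I would then split into two cases: if $p<p'$ then $x=0$ and $\{p,p'\}$ is an inversion of $\rho$ exactly when $s>s'$, so $y=z$; if $p>p'$ then $x=1$ and, reading $\{p,p'\}$ in increasing order $p'<p$, it is an inversion of $\rho$ exactly when $s'>s$, so $y=1-z$ (using $s\neq s'$). In both cases $z\equiv x+y\pmod 2$. Summing this identity over all pairs $\{i,i'\}\subseteq I$ gives the desired congruence, since the three sums count $|\inv(\sigma)|$, $|\inv(\pi|_I)|$ and $|\inv(\rho)|$ respectively.

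I expect the only delicate point to be the clean correspondence of inversion sets across the three distinct orders, in particular that the pairs of $\pi[I]$ relevant to $\inv(\rho)$ are indexed, via $\pi|_I$, by the pairs of $I$, but with the order of $\pi[I]$ (not that of $I$) deciding which element of the pair comes first; this is exactly what the case split on the sign of $p-p'$ resolves. An essentially equivalent and perhaps more conceptual route is to standardize each of $\pi|_I$, $\rho$, $\sigma$ to a permutation of $[\,|I|\,]$ through the increasing enumerations of $I$, $\pi[I]$, $\sigma[I]$, note that standardization preserves inversion counts and carries $\sigma=\rho\circ\pi|_I$ to the composite of the standardized permutations, and then invoke the classical multiplicativity of the sign on $\allperm_{[\,|I|\,]}$ together with $(\ominus\unit)^2=\unit$. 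The explicit pairwise argument above is simply this statement unwound.
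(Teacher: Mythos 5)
Your proof is correct and is essentially the paper's argument: the paper disposes of this proposition in one line by standardizing $\sigma$, $\pi|_I$ and $\rho$ to permutations of $[\,|I|\,]$ and invoking the classical multiplicativity of the sign, which is exactly the ``more conceptual route'' you describe at the end. Your explicit pairwise matching of inversions across the three orders (together with the observation that $(\ominus\unit)^2=\unit$, so only the parity of $|\inv|$ matters) simply supplies the details the paper leaves implicit, and it is carried out correctly.
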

This property is a  generalization of the multiplicativity of permutation sign, proved analogously by  
taking the permutations on~$\big[|I|\big]$ induced from~$\sigma,$~$\pi|_I$ and~$\rho$.

\begin{df}
We define the \myemph{determinant} of a  matrix $A=(a_{i,j})\in \SS^{n\times n}$ 
to be 
$$\det(A)=\bigoplus_{\sigma \in \allperm_{[n]}}\sign(\sigma)\bigodot_{i\in[n]} a_{i,\sigma(i)}.$$ 
We define a matrix  to be  \myemph{nonsingular}  if~$\det(A)\in \SS^*$,
and if $\SS$ has a modulus,  we refer to any permutation with weight of highest absolute value  as 
\myemph{a dominant permutation}.
\end{df}
When $\SS=\Mod$, a totally ordered idempotent semifield,
or~$\SS=\supertropical$, we have $\sign(\sigma)=\unit$, so 
the determinant is actually the same as the \textit{permanent}
$$\per(A)=\bigoplus_{\sigma \in \allperm_{[n]}}\bigodot_{i\in[n]} a_{i,\sigma(i)}.$$ 
For a general $\SS$ with a modulus, the modulus of the determinant coincides with the permanent 
of the modulus of $A$ over $\Mod$.
Moreover, when $\Mod=\rmax$, this is the \textit{max permanent}  of the modulus of $A$
 defined in~\cite{CCP} and~\cite{MNMX}, and it is the value of
an associated optimal assignment problem. 

Note that when $\SS=\Mod$,
and thus when $\SS=\rmax$, a matrix is nonsingular in the above sense 
if and only if $\per(A)\neq \zero$ that is if the
optimal assignment problem is feasible.
So the nonsingularity of a matrix $A\in \rmax^{n\times n}$ seen as a matrix
over $\rmax$ does not imply its tropical nonsingularity
in the  sense of~\cite{RGST}.
However, using the injection of $\rmax$ into $\supertropical$,
$A$ can also be seen as a matrix over $\supertropical$, that is as
an element of $\supertropical^{n\times n}$, and in that case
$A$ is nonsingular if and only if $A$ is tropically nonsingular
in the  sense of~\cite{RGST}.

\begin{df} Let $\SS$ be fixed.
 The~$n\times n$ matrix with $\unit$ on the diagonal and $\zero $ otherwise is the  \myemph{identity matrix},  
denoted by $\mathcal{I}$, or by $\mathcal{I}_n$ if indicating its size is required.

A matrix $A$ is \myemph{ invertible} if there exists a matrix~$B$ such that $$A B=B A=\mathcal{I}.$$
Let~$S=\{s_i:\ i\in[n]\}$ be a totally ordered set of cardinality $n$ with~$s_i<s_j\ \forall i<j$.
With an abuse of notation, we shall consider matrices indexed by the ordered elements of $S$, called $S\times S$ 
matrices, and identify them to $n\times n$ matrices. Then, a $S\times S$ matrix $A$ with entries in $\SS$
is identified to an element of $\SS^{n\times n}$ and its entries will be denoted either by $A_{s,t}$ with $s,t\in S$,
or by $a_{i,j}$ with $i, j, \in [n]$, and $a_{i,j}=A_{s_i,s_j}$.
In particular, when $S=[n]$, $a_{i,j}=A_{i,j}$.

 A $S\times S$ matrix~$A=(a_{i,j})$ is defined to be the \myemph{permutation matrix} associated to the 
permutation $\pi\in \allperm_{S}$, and will be denoted $P_{\pi}$, if, for all $i,j \in [n]$, 
$$ a_{i,j}=A_{s_i,s_j}=
\begin{cases}
\unit&,\; \text{if}\; s_j=\pi(s_i)\\
\zero&,\; \text{otherwise.} 
\end{cases}$$ 
A $S\times S$ matrix~$A=(a_{i,j})$ is defined to be the \myemph{diagonal matrix} with diagonal 
entries given by the sequence
$q=(q_1,\ldots, q_n)$ of $\SS$ or the map $q:S\rightarrow\SS,\; s_i\mapsto q_i$, and will be 
denoted $D_q$ , if, for all $i,j \in [n]$, 
$$a_{i,j}=A_{s_i,s_j}=
\begin{cases}
q_i&,\; \text{if}\; j=i\\
\zero&,\; \text{otherwise.} 
\end{cases}$$ 
When  $q$ is such that $q_i\in\SS^*\; \forall i\in [n]$, we denote $q^{-1}$ the sequence
or map such that $(q^{-1})_i=(q_i)^{-1}$, for all  $i\in [n]$.
\end{df}

\begin{rem}\label{inv}(see ~\cite{IBM})
Let $\SS$ be naturally ordered. It is necessarily zero-sum free, so
 a  matrix~$A$ is invertible in~$\SS^{n\times n}$ if and only if it is  the product of a permutation 
matrix~$P_{\pi}$, and a diagonal matrix~$D_q$ with an invertible determinant. We 
define~$M=D_q P_\pi $ as a \myemph{generalized permutation matrix}, also known as \myemph{monomial}. 
In particular~$P_{\pi}^{-1}=P_{\pi^{-1}}$, $D_{q}^{-1}=D_{q^{-1}}$, $M=P_\pi D_{q\circ \pi^{-1}}$,
and $M^{-1}=D_{q^{-1}}P_{\pi^{-1}}$.
\end{rem}

\begin{pro}\label{resign}
Let~$\pi\in \allperm_{[n]}$,~$I\in \mathcal{P}_k([n])$. We define~$\tau_I\in \allperm_{[n]}$ to be the 
permutation sending $[k]$ to $I$, and such that its restrictions to $[k]$ and $[n]\setminus[k]$ are order-preserving.
We define $\tau_{\pi[I]}$ similarly. Then,

\begin{enumerate}

\item\label{resign-prop1} $\sign(\tau_I)\sign(\tau_{\pi[I]})=(\ominus\unit)^{\sum_{i\in I}i+\pi(i)}$.

\item\label{resign-prop2} $\sign(\pi)=
\sign(\pi|_I)\sign(\pi|_{I^c})(\ominus\unit)^{\sum_{i\in I}i+\pi(i)}.$

\end{enumerate} \end{pro}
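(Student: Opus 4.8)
The plan is to reduce everything to the combinatorics of inversions, exploiting that both $\tau_I$ and $\tau_{\pi[I]}$ are order-preserving on each of the two blocks $[k]$ and $[n]\setminus[k]$.

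For part~(\ref{resign-prop1}) I would first count the inversions of $\tau_I$. Since $\tau_I$ is increasing on $[k]$ and on $[n]\setminus[k]$, an inversion $(a,b)$ with $a<b$ can only occur with $a\in[k]$ and $b\in[n]\setminus[k]$; it then corresponds bijectively to a pair $(x,y)\in I\times I^c$ with $x>y$, where $x=\tau_I(a)\in I$ and $y=\tau_I(b)\in I^c$. Writing $I=\{i_1<\cdots<i_k\}$, the element $i_j$ lies above exactly $i_j-j$ members of $I^c$ (there are $i_j-1$ elements of $[n]$ below it, of which $j-1$ lie in $I$), so
\[
|\inv(\tau_I)|=\sum_{j=1}^k(i_j-j)=\sum_{i\in I}i-\binom{k+1}{2}\enspace .
\]
The identical computation applied to $\tau_{\pi[I]}$, whose associated subset is $\pi[I]$, gives $|\inv(\tau_{\pi[I]})|=\sum_{i\in I}\pi(i)-\binom{k+1}{2}$. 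Multiplying the two signs and using $(\ominus\unit)^2=\unit$ (so $(\ominus\unit)^m$ depends only on the parity of $m$), the two copies of $\binom{k+1}{2}$ together contribute $(\ominus\unit)^{-2\binom{k+1}{2}}=(\ominus\unit)^{-k(k+1)}=\unit$ because $k(k+1)$ is even, leaving exactly $(\ominus\unit)^{\sum_{i\in I}i+\pi(i)}$.

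For part~(\ref{resign-prop2}) I would combine part~(\ref{resign-prop1}) with a conjugation that puts $\pi$ in block form. Set $\beta:=\tau_{\pi[I]}^{-1}\circ\pi\circ\tau_I\in\allperm_{[n]}$. By construction $\tau_I$ maps $[k]$ onto $I$, then $\pi$ maps $I$ onto $\pi[I]$, and finally $\tau_{\pi[I]}^{-1}$ maps $\pi[I]$ back onto $[k]$; hence $\beta$ stabilizes both $[k]$ and $[n]\setminus[k]$. Using the multiplicativity of the sign on $\allperm_{[n]}$ (valid since $\sign=(\ominus\unit)^{|\tran(\cdot)|}$) and $\sign(\tau_I^{-1})=\sign(\tau_I)$, the relation $\pi=\tau_{\pi[I]}\circ\beta\circ\tau_I^{-1}$ yields $\sign(\pi)=\sign(\tau_{\pi[I]})\sign(\beta)\sign(\tau_I)$. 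Because $\beta$ is block-diagonal, a cross-block pair $a<b$ (with $a\in[k]$, $b\in[n]\setminus[k]$) has $\beta(a)\in[k]<\beta(b)$ and so is never an inversion; thus all inversions of $\beta$ lie within a single block and $\sign(\beta)=\sign(\beta|_{[k]})\sign(\beta|_{[n]\setminus[k]})$.

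Finally I would identify the two block-restrictions with $\pi|_I$ and $\pi|_{I^c}$. The permutation $\beta|_{[k]}$ is precisely the permutation of $[k]$ induced from the bijection $\pi|_I\colon I\to\pi[I]$ through the order-isomorphisms $\tau_I|_{[k]}\colon[k]\to I$ and $\tau_{\pi[I]}|_{[k]}\colon[k]\to\pi[I]$; since inversions are preserved under order-isomorphisms, $\sign(\beta|_{[k]})=\sign(\pi|_I)$, and likewise $\sign(\beta|_{[n]\setminus[k]})=\sign(\pi|_{I^c})$. Substituting and using commutativity to regroup, $\sign(\pi)=\sign(\pi|_I)\sign(\pi|_{I^c})\,\sign(\tau_I)\sign(\tau_{\pi[I]})$, and part~(\ref{resign-prop1}) converts the last two factors into $(\ominus\unit)^{\sum_{i\in I}i+\pi(i)}$, as required. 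The only genuinely delicate point is this last identification of $\sign(\beta|_{[k]})$ with $\sign(\pi|_I)$, which is the order-isomorphism invariance of the sign already invoked after Proposition~\ref{signs}; everything else is routine inversion bookkeeping together with the parity cancellation of part~(\ref{resign-prop1}).
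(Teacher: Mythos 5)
Your proof is correct and follows essentially the same route as the paper: part~(1) by counting $\sum_{j}(i_j-j)$ for each of $\tau_I$ and $\tau_{\pi[I]}$ and cancelling the even leftover term, and part~(2) by conjugating $\pi$ to the block-diagonal permutation $\tau_{\pi[I]}^{-1}\circ\pi\circ\tau_I$, splitting its sign over the two blocks, and identifying the block signs with $\sign(\pi|_I)$ and $\sign(\pi|_{I^c})$ via the order isomorphisms. The only cosmetic differences are that you count inversions where the paper counts transpositions (same parity, as the paper notes just before the proposition) and that you phrase the conjugation with permutations rather than permutation matrices.
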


\begin{proof} To show~\eqref{resign-prop1}, we note that the number of transpositions in~$\tau_I$ and~$\tau_{\pi[I]}$  are    
$$\sum_{j\in[k]}i_j-j\ \ \text{ and }\ \ \sum_{j\in[k]}l_j-j=\sum_{j\in[k]}\pi(i_j)-j,$$ respectively, where $I=
\{i_1<\dots <i_k\},\ \pi[I]=\{l_1<\dots <l_k\}$. As a result $$\sign(\tau_I)\sign(\tau_{\pi[I]})=
(\ominus\unit)^{\sum_{i_j\in I}i_j+\pi(i_j)-2j}=(\ominus\unit)^{\sum_{i\in I}i+\pi(i)}.$$
For~\eqref{resign-prop2}, The permutation matrix~$P_{\tau_{\pi[I]}^{-1}\pi\tau_I}=P_{\tau_I}P_{\pi}P_{\tau_{\pi[I]}^{-1}}$ 
has the~$I\times \pi[I]$ block of~$P_\pi$ as its~$[k]\times [k]$ block, and the~$I^c\times \pi[I^c]$ block 
of~$P_\pi$ as its~$([n]\setminus[k])\times ([n]\setminus[k])$ block. The permutation matrix of~$\tau_I$ 
(resp.\ $\tau_{\pi[I]}$) has~$\mathcal{I}_k$ as its~$[k]\times I$ (resp.~$[k]\times \pi[I]$) block, and 
has~$\mathcal{I}_{n-k}$ as its~$([n]\setminus[k])\times I^c$ (resp.~$([n]\setminus[k])\times \pi[I^c]$) 
block.
Therefore  
$$\sign(\tau_{\pi[I]}^{-1})\sign(\pi)\sign(\tau_I)=\sign(\tau_{\pi[I]}^{-1}\pi\tau_I)=$$
$$\big[\sign(\tau_{\pi[I]}|_{[k]})^{-1}\sign(\pi|_I)\sign(\tau_I|_{[k]})\big]\big[\sign(\tau_{\pi[I]}|_{[n]
\setminus[k]})^{-1}\sign(\pi|_{I^c})\sign(\tau_I|_{[n]\setminus[k]})\big].$$
Since~$\tau_{\pi[I]}$ and~$\tau_I$ have no inversions over~$[k]$ and~$[n]\setminus[k]$, their signs 
are~$\unit$. As a result
  $$\sign(\pi)=\sign(\tau_{\pi[I]})\sign(\pi|_I)\sign(\pi|_{I^c})\sign(\tau_I)^{-1}=
\sign(\pi|_I)\sign(\pi|_{I^c})(\ominus\unit)^{\sum_{i\in I}i+\pi(i)}\enspace .\qed$$
\renewcommand{\qedsymbol}{}
\end{proof}

The following corollary is a result of Property~\ref{signs} and Proposition~\ref{resign}. 
\begin{cor}\label{eps}
For an elementary path~$\rho$  from~$i$ to~$j$, we consider the cycle~$\sigma=(i\ \rho(i)\ \rho^2(i)\cdots j)\in\allperm_{[n]}$.  If~$\rho$ is the concatenation $\rho_1\rho_2$ of elementary paths $\rho_1,\rho_2$ from~$i$ to~$k$ and from~$k$ to~$j$ respectively, $\sigma_1=(i\ \rho_1(i)\ \rho_1^2(i)\ \dots\ k)\in\allperm_{[n]},\ \sigma_2=(k\ \rho_2(k)\ \rho_2^2(k)\ \dots\ j)\in\allperm_{[n]}$ are the corresponding cycles, and~$\ell$ denotes the length of~$\rho$,  then  $$\sigma|_{\{j\}^c}=\sigma_1|_{\{k\}^c}\circ\sigma_2|_{\{j\}^c}\ \text{ and }\ \sign(\sigma|_{\{j\}^c})=(\ominus\unit)^\ell (\ominus\unit)^{i+j}=\sign(\sigma_1|_{\{k\}^c})\sign(\sigma_2|_{\{j\}^c}).$$ \end{cor}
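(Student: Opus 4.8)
The plan is to establish the two asserted equalities separately, the set-theoretic identity of bijections first and the sign computation second. I write the underlying elementary path as $i=v_0\to v_1\to\cdots\to v_\ell=j$, so that $k=v_p$ for some $0<p<\ell$: by Remark~\ref{maxpath}, $k$ is a genuine intermediate index, whence $\rho_1,\rho_2$ are nontrivial, of lengths $\ell_1:=p$ and $\ell_2:=\ell-p$ with $\ell_1+\ell_2=\ell$. The cycle $\sigma=(v_0\ v_1\cdots v_\ell)$ sends $v_t\mapsto v_{t+1}$ and $v_\ell\mapsto v_0$ while fixing every node off the path, so $\sigma|_{\{j\}^c}\in\allperm_{\{j\}^c,\{i\}^c}$ is precisely the bijection that agrees with $\rho$ on its domain $\{v_0,\dots,v_{\ell-1}\}$ and is the identity off $\{v_0,\dots,v_\ell\}$; I use the analogous descriptions of $\sigma_1|_{\{k\}^c}$ and $\sigma_2|_{\{j\}^c}$.

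For the first equality I would check $\sigma|_{\{j\}^c}=\sigma_1|_{\{k\}^c}\circ\sigma_2|_{\{j\}^c}$ pointwise, after noting that the composite is well defined because $\sigma_2|_{\{j\}^c}$ maps $\{j\}^c$ onto $\{k\}^c$, the domain of $\sigma_1|_{\{k\}^c}$. For $x\in\{j\}^c$ there are three cases: if $x\in\{v_p,\dots,v_{\ell-1}\}$ then $\sigma_2|_{\{j\}^c}$ sends it along $\rho_2$ into $\{v_{p+1},\dots,v_\ell\}$, which $\sigma_1|_{\{k\}^c}$ fixes; if $x\in\{v_0,\dots,v_{p-1}\}$ then $\sigma_2|_{\{j\}^c}$ fixes $x$ and $\sigma_1|_{\{k\}^c}$ sends it along $\rho_1$; and if $x$ is off the path both act as the identity. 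In every case the composite coincides with $\rho$ extended by the identity, that is with $\sigma|_{\{j\}^c}$.

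For the signs the key tool is Proposition~\ref{resign}\eqref{resign-prop2} with a singleton $I$. Applied to $\sigma$ with $I=\{j\}$ it gives $\sign(\sigma)=\sign(\sigma|_{\{j\}})\sign(\sigma|_{\{j\}^c})(\ominus\unit)^{j+\sigma(j)}$; since $\sigma|_{\{j\}}$ is a one-point bijection of sign $\unit$, $\sigma(j)=i$, and $\sigma$ is an $(\ell+1)$-cycle, hence a product of $\ell$ transpositions with $\sign(\sigma)=(\ominus\unit)^\ell$ (the fact recalled after Definition~\ref{EP}), I obtain, using $(\ominus\unit)^2=\unit$,
\[\sign(\sigma|_{\{j\}^c})=(\ominus\unit)^\ell(\ominus\unit)^{i+j}.\]
The same specialization applied to $\sigma_1$ (with $I=\{k\}$, $\sigma_1(k)=i$) and to $\sigma_2$ (with $I=\{j\}$, $\sigma_2(j)=k$) gives $\sign(\sigma_1|_{\{k\}^c})=(\ominus\unit)^{\ell_1}(\ominus\unit)^{i+k}$ and $\sign(\sigma_2|_{\{j\}^c})=(\ominus\unit)^{\ell_2}(\ominus\unit)^{k+j}$; multiplying these, with $\ell_1+\ell_2=\ell$ and $(\ominus\unit)^{2k}=\unit$, returns the same value, which is the second equality. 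Presumably the intended route for this last step instead invokes Proposition~\ref{signs} with $\pi=\sigma_2$ and $I=\{j\}^c$: there $\pi|_I=\sigma_2|_{\{j\}^c}$ and the first equality forces the accompanying $\rho$ to be $\sigma_1|_{\{k\}^c}$, so that $\sign(\sigma|_{\{j\}^c})=\sign(\sigma_2|_{\{j\}^c})\sign(\sigma_1|_{\{k\}^c})$ at once.

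I expect the only real obstacle to be the bookkeeping in the first equality: keeping the three index sets $\{j\}^c,\{k\}^c,\{i\}^c$ and the extension-by-identity viewpoint straight so that the case analysis is airtight. Once Propositions~\ref{resign} and~\ref{signs} are in hand, the sign statements reduce to short parity computations modulo the relation $(\ominus\unit)^2=\unit$.
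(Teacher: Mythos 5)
Your proof is correct and follows essentially the same route as the paper's: the paper likewise obtains $\sign(\sigma|_{\{j\}^c})=\sign(\sigma)\sign(\sigma|_{\{j\}})(\ominus\unit)^{i+j}=(\ominus\unit)^\ell(\ominus\unit)^{i+j}$ from Proposition~\ref{resign} with the singleton $I=\{j\}$, and asserts the factorization $\sigma|_{\{j\}^c}=\sigma_1|_{\{k\}^c}\circ\sigma_2|_{\{j\}^c}$ directly from elementarity of $\rho_1\rho_2$. Your pointwise case check of that factorization and the explicit parity computation for $\sign(\sigma_1|_{\{k\}^c})\sign(\sigma_2|_{\{j\}^c})$ simply spell out details the paper leaves implicit.
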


\begin{proof} We get~$\sigma|_{\{j\}^c}=\sigma_1|_{\{k\}^c}\circ\sigma_2|_{\{j\}^c}$, since~$\rho_1\rho_2$ is an elementary path. Then, from Proposition~\ref{resign}, we get
\begin{eqnarray*}\sign(\sigma|_{\{j\}^c})&=&\sign(\sigma)\sign(\sigma|_{\{j\}} )(\ominus\unit)^{i+j}\\
&=&(\ominus\unit)^\ell (\ominus\unit)^{i+j}\\
&=&\sign(\sigma_1|_{\{k\}^c})\sign(\sigma_2|_{\{j\}^c})\enspace. \qed\end{eqnarray*}
\renewcommand{\qedsymbol}{}
\end{proof}

\begin{thm}\label{detAB} 
For $A,B\in\SS^{n\times n}$, we have that \begin{equation}\label{deter}\det(A B) 
\succeq^\circ \det(A)\det(B),\end{equation} with equality when 
\begin{enumerate}
\item \label{detAB1} 
$\SS$ is naturally ordered and  $A$ or $B$ are invertible, 
\item  \label{detAB2} $\SS=\TT\ne\TT^\circ$ and $\det(AB)\in\TT^\vee$.
\end{enumerate}
\end{thm}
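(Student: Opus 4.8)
The plan is to prove \eqref{deter} by expanding the product in the style of the Cauchy--Binet argument, grouping the terms according to the intermediate index. Writing $(AB)_{i,j}=\bigoplus_{t\in[n]}a_{i,t}b_{t,j}$ and distributing the product over $i\in[n]$, one obtains
$$\det(AB)=\bigoplus_{t:[n]\to[n]}\Bigl(\bigodot_{i\in[n]}a_{i,t_i}\Bigr)\Bigl(\bigoplus_{\sigma\in\allperm_{[n]}}\sign(\sigma)\bigodot_{i\in[n]}b_{t_i,\sigma(i)}\Bigr),$$
where $t_i:=t(i)$. First I would isolate the bijective indices: when $t=\tau$ is a permutation, reindexing the inner sum by $\rho=\sigma\tau^{-1}$ and using the multiplicativity of $\sign$ on $\allperm_{[n]}$ shows that the inner factor equals $\sign(\tau)\det(B)$; summing over all permutations $\tau$ and pulling $\det(B)$ out by distributivity gives exactly $\det(A)\det(B)$.

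The heart of the matter is that every non-bijective $t$ contributes a singular (balanced) element. For such a $t$ the inner factor is the determinant of the matrix $B^{(t)}$ whose $i$-th row is the $t_i$-th row of $B$; since $t$ is not injective there are $p\neq q$ with $t_p=t_q$, so rows $p$ and $q$ of $B^{(t)}$ coincide. The fixed-point-free involution $\sigma\mapsto\sigma\circ(p\,q)$ on $\allperm_{[n]}$ flips $\sign$ while preserving $\bigodot_i (B^{(t)})_{i,\sigma(i)}$ (here the commutativity of $\SS$ and the equality of rows $p,q$ are used), so pairing $\sigma$ with $\sigma\circ(p\,q)$ writes $\det(B^{(t)})$ as a sum of terms of the form $w\ominus w=w^\circ\in\SS^\circ$. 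Multiplying by $\bigodot_i a_{i,t_i}$ and using $xy^\circ=(xy)^\circ$ together with $a^\circ\oplus b^\circ=(a\oplus b)^\circ$ (valid since $\ominus$ is additive and $\SS^\circ$ is an ideal), the whole non-bijective part collapses into a single $c^\circ$. Hence $\det(AB)=\det(A)\det(B)\oplus c^\circ\succeq^\circ\det(A)\det(B)$, which is \eqref{deter}. I expect this step — controlling the non-cancelling terms and repackaging them as one balanced element, in the absence of additive inverses — to be the main obstacle.

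For the equality in case~\eqref{detAB1}, assume first that $B$ is invertible. By Remark~\ref{inv} (which uses that a naturally ordered semiring is zero-sum free), $B=D_qP_\pi$ is a generalized permutation matrix, so $b_{s,j}\neq\zero$ only for $j=\pi(s)$. Then $\bigodot_i b_{t_i,\sigma(i)}\neq\zero$ forces $\sigma=\pi\circ t$, which is a permutation only when $t$ itself is bijective; thus every non-bijective term vanishes, $c^\circ=\zero$, and equality holds. The case where $A$ is invertible follows by transposition, using $\det(A^{\mathsf T})=\det(A)$ (itself a consequence of $\sign(\sigma)=\sign(\sigma^{-1})$ and commutativity) and applying the previous case to $\det(B^{\mathsf T}A^{\mathsf T})$. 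For case~\eqref{detAB2}, I would feed the already-established inequality \eqref{deter} into Proposition~\ref{prop2}: taking $b=\det(AB)\in\TT^\vee$ and $a=\det(A)\det(B)$, the hypotheses $b\succeq^\circ a$ and $b\in\SS^\vee$, together with $\TT^\circ\neq\TT$ and Property~\ref{prp1}, yield $a=b$, that is $\det(AB)=\det(A)\det(B)$.
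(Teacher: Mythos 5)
Your proposal is correct and takes essentially the same route as the paper: the inequality \eqref{deter} is obtained there by citing the transfer-principle argument of Gaubert's thesis (Proposition~2.1.7) and its supertropical version, which is precisely the expansion over maps $t:[n]\to[n]$ and the sign-flipping involution on repeated rows that you write out, while case~\eqref{detAB1} is likewise reduced to the monomial structure of invertible matrices and case~\eqref{detAB2} is exactly the application of Proposition~\ref{prop2}. The only difference is that you supply in full the combinatorial details that the paper delegates to the cited references.
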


Identity~\eqref{deter} was proved in~\cite[Proposition~2.1.7]{G.Ths}.
It subsequently appeared in~\cite[Lemma 3.2]{gauBC96b}, with an application
to minimal realization of linear recurrent sequences. 
It also appeared in~\cite[Theorem~3.5]{STMA} over~$\supertropical$.
Statement~\eqref{detAB1} follows from~\cite[Proposition~3.4]{AGG14} 
and the property that an invertible matrix is necessarily a  monomial matrix.  \eqref{detAB2} is an immediate consequence of 
Proposition~\ref{prop2}. 

\begin{df} A  matrix~$A=(a_{i,j})\in \SS^{n\times n}$ is  \myemph{definite} if~ $\det(A)=a_{i,i}=
\unit\ \forall i\in[n].$ \end{df}
Obviously, a definite matrix is nonsingular (over $\SS$).
When $\SS=\rmax$, the above definition is equivalent to the one of~\cite{MA}
of definite matrices.
However, when $A\in \rmax^{n\times n}$ is seen as a matrix over $\supertropical$,
$\det(A)=\unit$ implies that the optimal assignment problem has a unique
solution, hence $A$ is definite over $\supertropical$ if and only if
$A$ is \textit{strictly definite} in the sense of~\cite{MA}.
Note also that definite matrices have to be distinguished from \textit{normal matrices}, defined 
in~\cite{MA} to have non-positive (that is $\preceq \unit$) non-diagonal entries, that will not be in 
use in the present paper.

When $\SS=\TT$, and $A$ is nonsingular, then a  dominant permutation of~$A$  
has a weight equal to the determinant of $A$ (due to Property~\ref{prp1}).
Then, this dominant permutation may be normalized and relocated to the diagonal, using  an invertible
 matrix, obtaining a definite matrix with a dominant normalized~$\Id$-permutation. That is,~$A=P\bar{A}
\text{ or }\bar{A}P,$ where~$P$ is an invertible matrix such that $\det(A)=\det(P)$, and the
 matrix~$\bar{A}$ is definite. We then use the following definition.

\begin{df} A definite matrix~$\bar{A}$  is  a \myemph{left} (resp.\ \myemph{right}) \myemph{definite form}  of 
a nonsingular matrix~$A\in \TT^{n\times n}$,  if~$P$ {normalizes} it by acting on its rows (resp.\ columns):
 $A=P\bar{A}$ (resp.\ $A=\bar{A}P$). The invertible matrix~$P$ is the \myemph{left} (resp.~\myemph{right}) 
\myemph{normalizer}  of~$A$, corresponding to its  \myemph{left} (resp.~\myemph{right}) \myemph{definite form}.\end{df}

We introduce a standard combinatorial property of definite matrices.
\begin{lem}\label{dfs} Let~$A$ be an~$n\times n$ definite matrix.  
\begin{enumerate}
\item\label{dfs1} For every signed  permutation~$\sigma\in\allperm_I\setminus\{\Id\},\ I\in\mathcal{P}_k([n])$, in~$A$
 we have$$\sign (\sigma)\bigodot_{i\in I}
A_{i,\sigma(i)}\curlyeqprec\unit.$$ 

\item\label{dfs2} The signed  cycle  of every $\cli\in C_\sigma$ s.t.~$\cli\ne \{i\}$ satisfies $$ (\ominus\unit)^{|\cli|-1}
\bigodot_{j\in \cli}A_{j,\sigma(j)}\curlyeqprec\unit.$$

\item\label{dfs3} 
Consider a signed  bijection~$\sigma\in\allperm_{\{t\}^c,\{s\}^c}$ in~$A$,
with  $s,t\in[n]:\ s\ne t$. Let
$\bar{s}\in C_\sigma$ and let $\tau\in\allperm_{\{t\}^c,\{s\}^c}$ be  such that $\tau|_{\bar{s}}=\sigma|_{\bar{s}}$ and $\tau|_{(\bar{s}\cup \{ t\})^c}=\Id$. We have
$$\sign(\sigma)\bigodot_{\bar{i}\in C_\sigma}\bigodot_{j\in\bar{i}}A_{j,\sigma(j)}\curlyeqprec
\sign(\tau)
\bigodot_{j\in\bar{s}}A_{j,\tau(j)}.$$ 
\end{enumerate}
\end{lem}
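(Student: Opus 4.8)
The plan is to derive all three bounds from the single fact that in a definite matrix the term indexed by $\Id$ already realises the determinant. First I would record a routine reduction: extending a permutation $\sigma\in\allperm_I$ to $\hat\sigma\in\allperm_{[n]}$ by the identity on $I^c$ leaves its sign unchanged (sign is multiplicative over cycles and a fixed point is a trivial cycle of sign $\unit$) and multiplies its weight by $\bigodot_{i\in I^c}A_{i,i}=\unit$; hence $\sign(\hat\sigma)\bigodot_{i\in[n]}A_{i,\hat\sigma(i)}=\sign(\sigma)\bigodot_{i\in I}A_{i,\sigma(i)}$, so the first assertion reduces to $I=[n]$. Now since $A_{i,i}=\unit$ and $\det(A)=\unit$, the identity term contributes $\unit$, giving $\unit=\unit\oplus\bigoplus_{\pi\neq\Id}\sign(\pi)\bigodot_iA_{i,\pi(i)}$. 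Fixing $\sigma\neq\Id$ and splitting off its term, write $\unit=u\oplus R$ with $u:=\unit\oplus\sign(\sigma)\bigodot_iA_{i,\sigma(i)}$ and $R$ the sum of the remaining terms. Then $\unit\preccurlyeq u$ (witnessed by the summand $\sign(\sigma)\bigodot_iA_{i,\sigma(i)}$ in $u$) and $u\preccurlyeq\unit$ (witnessed by $R$); since $\SS$ is naturally ordered, $\preccurlyeq$ is antisymmetric, so $u=\unit$, which is exactly $\sign(\sigma)\bigodot_iA_{i,\sigma(i)}\curlyeqprec\unit$. Note this squeeze needs only antisymmetry, and no modulus or case distinction between the idempotent and supertropical settings.

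The second assertion is then the special case of the first in which $I=\bar{i}$ is the support of a single nontrivial cycle and $\sigma$ is that cycle, whose sign is $(\ominus\unit)^{|\bar{i}|-1}$; the first assertion gives precisely $(\ominus\unit)^{|\bar{i}|-1}\bigodot_{j\in\bar{i}}A_{j,\sigma(j)}\curlyeqprec\unit$.

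For the third assertion I would factor the full signed bijection into its open path times its cycles and absorb the cycles. Since $\sigma,\tau\in\allperm_{\{t\}^c,\{s\}^c}$ agree on $\bar{s}$ and $\tau$ is the identity off $\bar{s}\cup\{t\}$, the composite $\gamma:=\tau^{-1}\circ\sigma$ is a permutation of $\{t\}^c$ equal to the identity on $\bar{s}$ and to the product of the classes of $C_\sigma$ other than $\bar{s}$ elsewhere; thus $\sigma=\tau\circ\gamma$. Applying Proposition~\ref{signs} to this factorization yields $\sign(\sigma)=\sign(\gamma)\sign(\tau)=\sign(\tau)\bigodot_{\bar{i}\neq\bar{s}}(\ominus\unit)^{|\bar{i}|-1}$. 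Factoring the weight as $\bigodot_{\bar{i}\in C_\sigma}\bigodot_{j\in\bar{i}}A_{j,\sigma(j)}=\big(\bigodot_{j\in\bar{s}}A_{j,\tau(j)}\big)\bigodot_{\bar{i}\neq\bar{s}}\bigodot_{j\in\bar{i}}A_{j,\sigma(j)}$ and combining, the left-hand side equals $\big(\sign(\tau)\bigodot_{j\in\bar{s}}A_{j,\tau(j)}\big)\bigodot_{\bar{i}\neq\bar{s}}\big((\ominus\unit)^{|\bar{i}|-1}\bigodot_{j\in\bar{i}}A_{j,\sigma(j)}\big)$, that is, the right-hand side times the product of the signed cycles (the trivial cycles contributing the factor $\unit$). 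By the second assertion each signed cycle is $\curlyeqprec\unit$, and since $\curlyeqprec$ is compatible with multiplication the entire cyclic product is $\curlyeqprec\unit$; multiplying the right-hand side by it keeps it $\curlyeqprec$ the right-hand side, which is the claim (when there are no nontrivial cycles this degenerates to the equality $\sigma=\tau$).

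The step I expect to be the main obstacle is the sign accounting in the third assertion. As the text stresses, the sign of a bijection is \emph{not} the product of the signs of its cycles and its open path, so $\sign(\sigma)=\sign(\tau)\bigodot_{\bar{i}\neq\bar{s}}(\ominus\unit)^{|\bar{i}|-1}$ cannot be read off directly and must be extracted from the generalized multiplicativity of Proposition~\ref{signs} applied to $\sigma=\tau\circ\gamma$; the crux is that all the ``extra'' sign coming from interleaving the path with the cycles is carried entirely by $\tau$, which is common to both sides of the inequality. Everything else is bookkeeping with the identities $A_{i,i}=\unit$ and the multiplicative compatibility of $\curlyeqprec$.
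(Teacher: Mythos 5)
Your proposal is correct and follows essentially the same route as the paper: parts \eqref{dfs1} and \eqref{dfs2} are obtained by the same squeeze $\unit\preccurlyeq\unit\oplus\sign(\sigma)\bigodot_i A_{i,\sigma(i)}\preccurlyeq\det(A)=\unit$ together with padding by the diagonal entries $A_{i,i}=\unit$, and part \eqref{dfs3} by peeling the signed cycles off the signed elementary path and bounding them by $\unit$. The only (cosmetic) difference is in the sign bookkeeping for \eqref{dfs3}: the paper closes the open path into a cycle by extending $\sigma$ to $\pi\in\allperm_{[n]}$ with $\pi(t)=s$ and then invokes Proposition~\ref{resign} and Corollary~\ref{eps} to identify $(\ominus\unit)^{|\bar{s}|}(\ominus\unit)^{s+t}$ as $\sign(\tau)$, whereas you factor $\sigma=\tau\circ(\tau^{-1}\circ\sigma)$ and read the same identity off Proposition~\ref{signs}; both arguments share with the paper the harmless degenerate case $\sigma=\tau$, where the conclusion is an equality rather than a strict instance of $\curlyeqprec$.
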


\begin{proof}Let $\sigma\ne \Id$ be a permutation. Using Definition~\ref{orderdf} 
$$\unit\preccurlyeq\unit \oplus\sign(\sigma)\bigodot_{i\in [n]} A_{i,\sigma(i)}\preccurlyeq\unit \oplus
\bigoplus_{\sigma\ne \Id}\sign(\sigma)\bigodot_{i\in [n]} A_{i,\sigma(i)}=\unit,$$ and therefore~$\unit 
\oplus\sign(\sigma)\bigodot_{i\in [n]} A_{i,\sigma(i)}=\unit,\ \forall \sigma\in\allperm_{[n]}\setminus\{\Id\}.$  
As a result
$$\unit\oplus \sign(\sigma|_{\cli})\bigodot_{j\in \cli}A_{j,\sigma(j)}\bigodot_{j\in I\setminus\cli}A_{j,j}=
\unit\oplus (\ominus\unit)^{|\cli|-1}\bigodot_{j\in \cli}A_{j,\sigma(j)}=\unit,\ \forall \cli\ne\{i\},\text{ and}$$
 $$\unit \oplus\sign(\sigma)\bigodot_{i\in I} A_{i,\sigma(i)} \bigodot_{i\in I^c} A_{i,i}
=\unit \oplus
\sign(\sigma)\bigodot_{i\in [I]} A_{i,\sigma(i)}=\unit,\ \forall \sigma\in\allperm_{I}\setminus\{\Id\}\enspace,$$
which shows Points~\eqref{dfs1} and \eqref{dfs2}.

Finally, consider $\sigma$ and $\tau$ as in Point~\eqref{dfs3}.
Let~$\pi\in\allperm_{[n]}$ s.t.~$\pi|_{\{t\}^c}=\sigma\in\allperm_{\{t\}^c,\{s\}^c}$ and~$\pi(t)=s$, and denote by~$\bar{\bar{s}}$ the element of $C_\pi$ containing $s$.
From Proposition~\ref{resign}, we have that
$\sign(\sigma)(\ominus\unit)^{t+s}=\sign(\pi)=\sign(\pi|_{\bar{\bar{s}}})\sign(\pi|_{\bar{\bar{s}}^c})=(\ominus\unit)^{|\bar{\bar{s}}|-1}\sign(\pi|_{\bar{\bar{s}}^c}),$
Notice that~$|\bar{\bar{s}}|-1=|\bar{s}|$ and that the path~$
\bigodot_{j\in\bar{s}}A_{j,\sigma(j)}$  from~$s$ to~$t$ is the only elementary open path in~$\sigma$. Using Point~\eqref{dfs1}, we get
\begin{eqnarray*}\sign(\sigma)\bigodot_{\bar{i}\in C_\sigma}\bigodot_{j\in\bar{i}}A_{j,\sigma(j)}&=&
(\ominus\unit)^{|\bar{\bar{s}}|-1}(\ominus\unit)^{t+s}\sign(\pi|_{\bar{\bar{s}}^c})
\bigg(\bigodot_{\substack{\bar{i}\in C_\pi:\\\bar{i}\subseteq \bar{\bar{s}}^c}}
\bigodot_{j\in\bar{i}}A_{j,\pi(j)}\bigg)\bigg(
\bigodot_{j\in\bar{s}}A_{j,\sigma(j)}\bigg)\\&\curlyeqprec&
(\ominus\unit)^{|\bar{s}|}(\ominus\unit)^{s+t}
\bigodot_{j\in\bar{s}}A_{j,\sigma(j)},\end{eqnarray*}
where $(\ominus\unit)^{|\bar{s}|}(\ominus\unit)^{s+t}$ is the sign of the elementary bijection~$\tau\in\allperm_{\{t\}^c,\{s\}^c}$, corresponding to the elementary path~$\bigodot_{j\in\bar{s}}A_{j,\sigma(j)}=\bigodot_{j\in\bar{s}}A_{j,\tau(j)}$ of $\sigma$ (Corollary~\ref{eps}).
\end{proof}

\begin{df}
A \myemph{quasi-identity} (or pseudo-identity) matrix over $\SS$ is a nonsingular, multiplicatively idempotent matrix,  
with $\unit$ on the diagonal, and off-diagonal entries in~$\SS^\circ$.

\end{df}

\begin{df}\label{adj} The~$\mathbf{(r,c)}$-\myemph{submatrix}~$A_{(r,c)}$ of a matrix~$A=(a_{i,j})\in 
\SS^{n\times n}$ is obtained by deleting  row~$r$ and column~$c$ of~$A$, and its determinant is called
 the~$\mathbf{(r,c)}$-\myemph{minor} of $A$.
 The \myemph{adjoint matrix}  of~$A$ is defined as~$\adj(A)=(a'_{i,j} ),$ where $$a'_{i,j} =
(\ominus\unit)^{ i+j}\det(A_{(j,i)})\enspace .$$
  Notice that~$\det(A_{(j,i)})$ is obtained as the sum corresponding to all permutations passing through~$(j,i)$, with~$a_{j,i}$  
 removed  $$\det(A_{(j,i)})=\bigoplus_{\substack{\pi\in \allperm_{[n]}:\\\pi(j)=i}}\sign(\pi|_{\{j\}^c}) 
a_{1,\pi(1)}\cdots a_{j-1,\pi(j-1)} a_{j+1,\pi(j+1)}\cdots a_{n,\pi(n)}.$$ 
Writing  each permutation as the product~of  disjoint cycles, and applying Proposition~\ref{resign} for~$I=\{j\}^c$, we get 
\begin{equation}a'_{i,j}=\bigoplus_{\substack{\pi\in \allperm_{[n]}:\\\pi(j)=i}}\sign(\pi)
\big(a_{i,\pi(i)}a_{\pi(i),\pi^2(i)}\cdots a_{\pi^{-1}(j),j}\big) \bigodot_{\substack{[k]:\\i\notin [k]
}}\bigodot_{j\in [k]}a_{j,\pi(j)}.
\label{formulaadjoint}
\end{equation} 

If~$\det(A)\in \SS^{*}$, we denote~$A^{\nabla}=\adj(A)\det(A)^{ -1},$ and call it \myemph{the quasi-inverse} of~$A$.
We also denote $\mathcal{I}_A:=AA^{\nabla}$ and~$\mathcal{I}'_A:= A^{\nabla}A$.
\end{df}

\begin{pro}\label{qasinv} Let $A\in \SS^{n\times n}$ be a nonsingular matrix.
We have
$$\mathcal{I}_A\succeq^\circ\mathcal{I},\;\mathcal{I}'_A\succeq^\circ\mathcal{I}\;\text{and}\;  
(\mathcal{I}_A)_{i,i}=(\mathcal{I}'_A)_{i,i}=\unit,\; i\in [n]\enspace .$$
Moreover, if~$\SS$ has a modulus, then~$|\det(\mathcal{I}_A)|=|\det(\mathcal{I}'_A)|=\unit,$
and if~$\SS=\supertropical$, then~$\mathcal{I}_A$ and~$\mathcal{I}'_A$
are quasi-identities.
\end{pro}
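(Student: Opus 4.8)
The plan is to derive everything from the two matrix products $A\adj(A)$ and $\adj(A)A$ and then divide by $\det(A)\in\SS^*$. First I would fix $i,j\in[n]$ and compute $(A\adj(A))_{i,j}=\bigoplus_t a_{i,t}\,a'_{t,j}$. By the cofactor formula~\eqref{formulaadjoint}, the entry $a'_{t,j}$ is the signed sum, over all $\pi\in\allperm_{[n]}$ with $\pi(j)=t$, of the weight of $\pi$ with the factor $a_{j,t}$ deleted; restoring the factor $a_{i,t}$ in that slot shows that $a_{i,t}a'_{t,j}$ is, over those $\pi$, the signed weight of the matrix obtained from $A$ by overwriting its $j$-th row with its $i$-th row. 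Summing over $t$ (equivalently over the possible values of $\pi(j)$) gives the clean identity $(A\adj(A))_{i,j}=\det(B^{i,j})$, where $B^{i,j}$ denotes $A$ with its $j$-th row replaced by its $i$-th row. For $i=j$ this is exactly $\det(A)$, so the diagonal of $A\adj(A)$ equals $\det(A)$.

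For $i\neq j$ the matrix $B^{i,j}$ has two equal rows, and here I would run the sign-reversing involution $\sigma\mapsto\sigma\circ(i\ j)$ on $\allperm_{[n]}$: since rows $i$ and $j$ of $B^{i,j}$ coincide, $\sigma$ and $\sigma\circ(i\ j)$ carry the same weight, while $\sign(\sigma\circ(i\ j))=\ominus\sign(\sigma)$. Each two-element orbit then contributes a term $w\ominus w=w^\circ\in\SS^\circ$, and since $\SS^\circ$ is an ideal closed under $\oplus$, the whole sum lies in $\SS^\circ$, so $(A\adj(A))_{i,j}\succeq^\circ\zero$. Combining the two cases, $A\adj(A)\succeq^\circ\det(A)\mathcal{I}$ with diagonal exactly $\det(A)$; multiplying on the right by $\det(A)^{-1}\in\SS^*$ and using again that $\SS^\circ$ is an ideal yields $\mathcal{I}_A\succeq^\circ\mathcal{I}$ and $(\mathcal{I}_A)_{i,i}=\unit$. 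The same computation on columns, pairing permutations by $\sigma\mapsto(i\ j)\circ\sigma$ and replacing a column of $A$ rather than a row, treats $\adj(A)A$ and gives $\mathcal{I}'_A\succeq^\circ\mathcal{I}$ with unit diagonal.

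For the modulus statement, one inequality is immediate: $\mathcal{I}_A\succeq^\circ\mathcal{I}$ forces $|\det(\mathcal{I}_A)|\succeq|\det(\mathcal{I})|=\unit$, since the diagonal alone realizes $\unit$ inside $\per(|\mathcal{I}_A|)$. The reverse inequality $|\det(\mathcal{I}_A)|\preceq\unit$ is the crux: it amounts to showing that $|\mathcal{I}_A|$ is a definite matrix over $\Mod$, i.e. that every cycle of $\mathcal{I}_A$ has modulus-weight $\preceq\unit$. As $|(\mathcal{I}_A)_{i,j}|=\per(|B^{i,j}|)\,|\det(A)|^{-1}$, a cycle $i_1\to\cdots\to i_m\to i_1$ has modulus-weight $\prod_\ell\per(|B^{i_\ell,i_{\ell+1}}|)\,|\det(A)|^{-m}$, so the claim reduces to the combinatorial inequality $\prod_\ell\per(|B^{i_\ell,i_{\ell+1}}|)\preceq\per(|A|)^m$. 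I expect this to be the main obstacle: the natural proof is an exchange/flow-decomposition argument that overlays the $m$ optimal assignments realizing the factors on the left and re-links their ``defective'' rows cyclically through $i_1,\dots,i_m$ into $m$ genuine assignments of $A$, each of weight $\preceq\per(|A|)=|\det(A)|$, the total arc-weight being preserved. Alternatively, writing $A=P\bar{A}$ with $P$ invertible and $\bar{A}$ a definite form (recall $\det$ is exactly multiplicative by an invertible matrix, Theorem~\ref{detAB}) reduces the cycle bound to the definite case, where the estimates of Lemma~\ref{dfs} apply directly. Together with the free inequality this gives $|\det(\mathcal{I}_A)|=\unit$, and symmetrically $|\det(\mathcal{I}'_A)|=\unit$.

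Finally, for $\SS=\supertropical$ one has $\SS^\circ=\mathbb{R}^\nu\cup\{-\infty\}$, so the relations above already say that $\mathcal{I}_A$ has unit diagonal and ghost off-diagonal entries. Moreover $|\det(\mathcal{I}_A)|=\unit$, and nonsingularity over $\supertropical$ forces the optimal assignment of $A$ to be unique, hence every non-identity permutation-weight is strictly $\prec\unit$ by the strict form of Lemma~\ref{dfs}; this makes $\det(\mathcal{I}_A)$ tangible, so $\mathcal{I}_A$ is nonsingular. It then remains to verify the multiplicative idempotency $\mathcal{I}_A^2=\mathcal{I}_A$: the diagonal stays $\unit$ because each $2$-cycle product is a ghost of modulus $\prec\unit$, and the off-diagonal is reproduced since the $k=i$ and $k=j$ terms already yield $(\mathcal{I}_A)_{i,j}$ while the remaining terms are ghosts dominated by it, this last domination being a Kleene-star/transitivity property of $\mathcal{I}_A$ that I would either read off from $\mathcal{I}_A\succeq^\circ\mathcal{I}$ or import from the cited supertropical references. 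This shows that $\mathcal{I}_A$ and $\mathcal{I}'_A$ are quasi-identities.
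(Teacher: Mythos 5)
Your main line is correct, but it takes a genuinely more self-contained route than the paper, whose proof of this proposition is almost entirely by citation: the relations $\mathcal{I}_A\succeq^\circ\mathcal{I}$ and $\mathcal{I}'_A\succeq^\circ\mathcal{I}$ are attributed to \cite[Proposition~2.1.2]{G.Ths} and \cite{reutstraub}, the quasi-identity property over $\supertropical$ to \cite[Theorem~2.8]{STMA2}, and the modulus identity is obtained by applying the modulus (so that $|\mathcal{I}_A|=\mathcal{I}_{|A|}$) and invoking the previous property for $|A|$. Your first paragraph --- the exact Laplace-expansion identity $(A\adj(A))_{i,j}=\det(B^{i,j})$, the sign-reversing involution $\sigma\mapsto\sigma\circ(i\ j)$ on a matrix with two equal rows (which pairs equal weights with opposite signs, so each orbit contributes an element of $\SS^\circ$), and division by $\det(A)\in\SS^*$ using that $\SS^\circ$ is an ideal --- is a complete and correct replacement for the first citation; it is precisely the computation that the transfer principle encapsulates. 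For the modulus statement, your second route (conjugate by the normalizer to reduce to a definite matrix, where $|\adj(\bar A)|=|\bar A|^*$ by Theorem~\ref{DF} and every cycle of $|\mathcal{I}_{\bar A}|$ then has weight $\preceq\unit$ by Lemma~\ref{dfs}) is the cleaner one and essentially reproduces the paper's reduction to $\mathcal{I}_{|A|}$; your first route, the inequality $\prod_\ell\per(|B^{i_\ell,i_{\ell+1}}|)\preceq\per(|A|)^m$, is true (the overlay of the $m$ optimal assignments is an $m$-regular bipartite multigraph, hence decomposes into $m$ permutations of $A$ by Birkhoff--von~Neumann, exactly as in the proof of Theorem~\ref{TSF}), but you only sketch it.

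Two steps in your supertropical paragraph are thinner than you present them. First, tangibility (hence nonsingularity) of $\det(\mathcal{I}_A)$ requires every nontrivial cycle of $\mathcal{I}_A$ to have modulus \emph{strictly} below $\unit$; there is no ``strict form of Lemma~\ref{dfs}'' in the paper, and passing from uniqueness of the dominant permutation of $A$ to this strict bound genuinely needs the overlay argument (in the decomposition into $m$ permutations, at least one must differ from the dominant one, because row $i_\ell$ is matched to two distinct columns inside the $\ell$-th factor, so its weight is strictly dominated). Second, the off-diagonal idempotency needs the transitivity $|(\mathcal{I}_A)_{i,k}(\mathcal{I}_A)_{k,j}|\preceq|(\mathcal{I}_A)_{i,j}|$, which does \emph{not} follow from $\mathcal{I}_A\succeq^\circ\mathcal{I}$; you correctly flag that you would import it. Since the paper itself outsources the whole quasi-identity claim to \cite[Theorem~2.8]{STMA2}, neither point makes your argument weaker than the paper's, but these are the two places where your sketch would need real work to become self-contained.
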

\begin{proof}
The first assertion can be deduced from~\cite[Proposition~2.1.2]{G.Ths}.
See also~\cite{reutstraub}.
The property that $\mathcal{I}_A$ and $\mathcal{I}'_A$
are quasi-identities when $\SS=\supertropical$
is shown in~\cite[Theorem~2.8]{STMA2}. 
When $\SS$ has a modulus, applying the modulus to the expression of
$\mathcal{I}_A$, we obtain that $|\mathcal{I}_A|=\mathcal{I}_{|A|}$,
hence applying the previous property to $|A|$, we get the 
second assertion of the proposition, that is
$|\det(\mathcal{I}_A)|=|\det(\mathcal{I}'_A)|=\unit$.
\end{proof}

\begin{df} Let~$S=\{s_i:\ i\in[n]\}$ be a totally ordered set of
cardinality $n$, with~$s_i<s_j\ \forall i<j$, and let $k\in\{0,\ldots, n\}$.
The~$k$th \myemph{compound matrix} of a $S\times S$ matrix~$A=(a_{i,j})$ with entries in $\SS$ is  
the~$\mathcal{P}_k(S)\times \mathcal{P}_k(S)$ matrix with entries in $\SS$, denoted~$A^{\wedge k}$, 
defined by $$A^{\wedge k}_{_{I,J}}=\bigoplus_{\sigma\in \allperm_{I,J}}\sign(\sigma)\bigodot_{s\in I}A_{s,\sigma(s)}
,\ \ I,J\in \mathcal{P}_k(S)\enspace ,$$
and identified as a $\left(\substack{n\\k}\right)\times \left(\substack{n\\k}\right)$ matrix. 
Identification,
signs of bijections in~$A^{\wedge k},$ and signs of entries  in~$\adj(A^{\wedge k}),$ are taken with 
respect to  the lexicographic order in~$\mathcal{P}_k(S)$.
\end{df}

\begin{rem}\label{compinv}
\begin{myenumerate}
\item\label{compinv1} In particular if $S=[n]$, then
 $$A^{\wedge k}_{_{I,J}}=\bigoplus_{\sigma\in \allperm_{I,J}}\sign(\sigma)\bigodot_{i\in I}
a_{i,\sigma(i)} ,\ \ I,J\in \mathcal{P}_k([n]),\; k\in\{0,\ldots, n\}\enspace .$$
So 
\begin{equation*}\label{n-1adj}A^{\wedge k}=\begin{cases}\unit&,\ k=0\\A&,\ k=1\end{cases},\ 
\text{ and }\ A^{\wedge n-1}=Q^{-1}\adj(A)^\T Q\in\SS^{n\times n},\end{equation*}
 where~$A^\T$ denotes the transpose of matrix $A$,  and~$Q$ is the monomial matrix defined 
by~$Q_{i,n+1-i}=(\ominus\unit)^i$, and ~$\zero$ otherwise,
which is orthogonal and satisfies $Q^T=(\ominus\unit)^{n+1}Q$.  That is~$\adj(A)=
Q(A^{\wedge n-1})^\T Q^{-1},$ or equivalently,~$\adj(A)_{i,j}=(\ominus\unit)^{i+j}
A^{\wedge n-1}_{\{j\}^c,\{i\}^c}.$
If~$A$ is diagonal, then the transpose and the entry-signs vanish 
so~$\adj(A)=P_\pi A^{\wedge n-1}P_\pi,$  where~$\pi(i)=n+1-i$,~$i\in [n]$. 

\item\label{compinv2}  
For $k\in \{0,\ldots, n\}$ and $\pi\in \allperm_{S}$, 
 define the maps  $g^{k,\pi},f^{k,\pi}:\mathcal{P}_k(S)\rightarrow\SS$ by $$g^{k,\pi}(I)=\sign(\pi|_{I})\ \text{ and }\ 
f^{k,\pi}(I)=\sign(\pi|_{\pi^{-1}[I]}), \; \text{for} \; I\in \mathcal{P}_k(S)\enspace .$$
Note that 
$g^{k,\pi}(I)=\sign(\pi|_{I})=\sign(\pi^{-1}|_{\pi[I]})=f^{k,\pi^{-1}}(I),$  for all $I\in \mathcal{P}_k(S)$.

Recall that for  $\pi\in \allperm_{S}$, 
$\pi^{(k)}\in\allperm_{\mathcal{P}_k(S)}$ is s.t.~$\pi^{(k)}(I)=\pi[I]$
for all $I\in \mathcal{P}_k(S)$.

If~$A$ is the permutation matrix~$P_\pi$ associated to~$\pi\in\allperm_{S}$, then
\[ A^{\wedge k}_{I,J}= \begin{cases}
\sign(\pi|_{I}) &,\ J=\pi[I]\\
\zero&,\ \text{otherwise} 
\end{cases}\]
so that it can be written as $A^{\wedge k}=D_{g^{k,\pi}}P_{\pi^{(k)}}=P_{\pi^{(k)}}D_{f^{k,\pi}}$.

If~$A$ is the  $S\times S$ diagonal matrix~$D_q$ with diagonal entries given by the map~$q:S\rightarrow\SS, \; 
i\mapsto q_i$, then~$A^{\wedge k}$ is the diagonal matrix $D_{h}$ with diagonal entries given by the map $h:
\mathcal{P}_k(S)\rightarrow\SS, \;
I \mapsto h_I=\bigodot_{i\in I}q_i$.

As a result of the (classical) Cauchy-Binet formula, if~$A=D_{q}P_\pi$ (resp.~$P_\pi D_{q}$), then $$A^{\wedge k}
=D_{h}D_{g^{k,\pi}}P_{\pi^{(k)}}
\ \ \text{ (resp. }P_{\pi^{(k)}}
D_{f^{k,\pi}}D_{h}\text{)}.$$
\end{myenumerate}
\end{rem}

\begin{pro} \label{CBF} (Tropical Cauchy-Binet,~\cite[Proposition 2.18]{G.Ths})
If~$A,B\in \SS^{n\times n}$, then $$(A B)^{\wedge k}\succeq^\circ A^{\wedge k} B^{\wedge k}.$$ 
\end{pro}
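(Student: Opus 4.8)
The plan is to prove the inequality entrywise, which suffices since $\succeq^\circ$ is applied to matrices entrywise. Fix $I,J\in\mathcal{P}_k([n])$. First I would expand the left-hand entry by substituting the product formula $(AB)_{i,j}=\bigoplus_{t\in[n]}a_{i,t}b_{t,j}$ into the definition of the compound matrix and distributing the outer product over the inner sums. Letting $\phi$ range over \emph{all} maps $I\to[n]$, this yields
$$(AB)^{\wedge k}_{I,J}=\bigoplus_{\sigma\in\allperm_{I,J}}\sign(\sigma)\bigoplus_{\phi:I\to[n]}\bigodot_{i\in I}a_{i,\phi(i)}b_{\phi(i),\sigma(i)}\enspace .$$
The whole argument then rests on splitting this double sum according to whether $\phi$ is injective or not.

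For the injective maps I would reindex by the image $K=\phi[I]\in\mathcal{P}_k([n])$, viewing $\phi$ as a bijection $\alpha\in\allperm_{I,K}$ and setting $\beta=\sigma\circ\alpha^{-1}\in\allperm_{K,J}$. Using the multiplicativity of the sign of bijections between ordered sets (Proposition~\ref{signs}) one has $\sign(\sigma)=\sign(\alpha)\sign(\beta)$, and reindexing the $b$-factors by $t=\alpha(i)$ turns the injective part of the sum into
$$\bigoplus_{K\in\mathcal{P}_k([n])}\Big(\bigoplus_{\alpha\in\allperm_{I,K}}\sign(\alpha)\bigodot_{i\in I}a_{i,\alpha(i)}\Big)\Big(\bigoplus_{\beta\in\allperm_{K,J}}\sign(\beta)\bigodot_{t\in K}b_{t,\beta(t)}\Big)=\big(A^{\wedge k}B^{\wedge k}\big)_{I,J}\enspace ,$$
which is exactly the desired right-hand side.

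For the non-injective maps I would show that their total contribution lies in $\SS^\circ$. Fix such a $\phi$ and choose $i_1\neq i_2$ in $I$ with $\phi(i_1)=\phi(i_2)$. Precomposition with the transposition $(i_1\,i_2)\in\allperm_I$ is a fixed-point-free involution on $\allperm_{I,J}$; it sends $\sigma$ to $\sigma'$ with $\sign(\sigma')=\ominus\sign(\sigma)$, while the equality $\phi(i_1)=\phi(i_2)$ ensures that the monomial $\bigodot_{i\in I}a_{i,\phi(i)}b_{\phi(i),\sigma(i)}$ is merely rearranged and hence unchanged. Thus each pair $\{\sigma,\sigma'\}$ contributes $\big(\sign(\sigma)\bigodot_{i\in I}a_{i,\phi(i)}b_{\phi(i),\sigma(i)}\big)^\circ\in\SS^\circ$, and summing over all such pairs and over all non-injective $\phi$ keeps the contribution in $\SS^\circ$ because this set is an ideal. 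Combining the two parts gives $(AB)^{\wedge k}_{I,J}=(A^{\wedge k}B^{\wedge k})_{I,J}\oplus c^\circ$ for some $c\in\SS$, that is $(AB)^{\wedge k}_{I,J}\succeq^\circ(A^{\wedge k}B^{\wedge k})_{I,J}$.

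The main obstacle I anticipate is the sign bookkeeping: verifying that the multiplicativity $\sign(\sigma)=\sign(\alpha)\sign(\beta)$ holds for bijections between arbitrary ordered subsets rather than just permutations, which is precisely what Proposition~\ref{signs} supplies, and confirming that precomposition with $(i_1\,i_2)$ genuinely flips the sign while fixing the monomial. Everything else is the familiar Cauchy--Binet reindexing, with the ring-theoretic cancellation of the non-injective terms replaced by the semiring observation that $x\ominus x=x^\circ$ is singular.
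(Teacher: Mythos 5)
Your proof is correct. The paper does not prove this proposition itself --- it only cites \cite[Proposition~2.18]{G.Ths} --- and your argument is exactly the standard direct combinatorial one that underlies that reference and the transfer principle: expand $(AB)^{\wedge k}_{I,J}$ over all maps $\phi:I\to[n]$, identify the injective part with $(A^{\wedge k}B^{\wedge k})_{I,J}$ via the reindexing $\sigma=\beta\circ\alpha$ and the sign multiplicativity of Proposition~\ref{signs}, and kill the non-injective part with the sign-reversing involution $\sigma\mapsto\sigma\circ(i_1\,i_2)$, whose paired terms sum to elements of $\SS^\circ$. All the steps check out, including the two points you flag as delicate (the sign multiplicativity for bijections of ordered subsets does follow from Proposition~\ref{signs} by extending $\alpha$ to a permutation of $[n]$, and precomposition with a transposition does flip $(\ominus\unit)^{|\inv(\cdot)|}$ while fixing the monomial since $\phi(i_1)=\phi(i_2)$).
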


\begin{thm}\label{ADJ} For $A,B\in \SS^{n\times n}$, we have
$$\adj(A B)\succeq^\circ \adj(B)\adj(A).$$  
\end{thm}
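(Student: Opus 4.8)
The plan is to deduce the statement from the tropical Cauchy--Binet inequality (Proposition~\ref{CBF}) applied with $k=n-1$, by rewriting the adjoint through the $(n-1)$-th compound matrix. Recall from Remark~\ref{compinv}, point~\ref{compinv1}, that $\adj(A)=Q(A^{\wedge n-1})^\T Q^{-1}$, where $Q$ is the fixed monomial (hence invertible) matrix with $Q_{i,n+1-i}=(\ominus\unit)^i$ and $\zero$ elsewhere. In this way both sides of the claimed relation are expressed in terms of compound matrices of $A$ and $B$, and the problem is transported to a comparison of $(AB)^{\wedge n-1}$ with $A^{\wedge n-1}B^{\wedge n-1}$.

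First I would compute the right-hand side. Using $Q^{-1}Q=\mathcal{I}$ and the fact that the transpose reverses products, one gets
$$\adj(B)\adj(A)=Q(B^{\wedge n-1})^\T(A^{\wedge n-1})^\T Q^{-1}=Q\big(A^{\wedge n-1}B^{\wedge n-1}\big)^\T Q^{-1},$$
whereas $\adj(AB)=Q\big((AB)^{\wedge n-1}\big)^\T Q^{-1}$. Hence it suffices to relate the two compound matrices $(AB)^{\wedge n-1}$ and $A^{\wedge n-1}B^{\wedge n-1}$ and then carry the resulting relation through transposition and conjugation by $Q$.

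Second, Proposition~\ref{CBF} with $k=n-1$ gives exactly $(AB)^{\wedge n-1}\succeq^\circ A^{\wedge n-1}B^{\wedge n-1}$. It then remains to check that the entrywise relation $\succeq^\circ$ is preserved under transposition and under left and right multiplication by an arbitrary matrix (in particular under conjugation by $Q$). Transposition merely relabels entries, so it preserves any entrywise relation. For multiplication, if $M\succeq^\circ N$ entrywise, write $M_{t,j}=N_{t,j}\oplus c_{t,j}^\circ$; then for any $P$ one has $P_{i,t}c_{t,j}^\circ=(P_{i,t}c_{t,j})^\circ$, and summing over $t$, $\bigoplus_t(P_{i,t}c_{t,j})^\circ=\big(\bigoplus_t P_{i,t}c_{t,j}\big)^\circ\in\SS^\circ$, so that $(PM)_{i,j}\succeq^\circ(PN)_{i,j}$; the argument for right multiplication is symmetric. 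Chaining these preservations yields
$$\adj(AB)=Q\big((AB)^{\wedge n-1}\big)^\T Q^{-1}\succeq^\circ Q\big(A^{\wedge n-1}B^{\wedge n-1}\big)^\T Q^{-1}=\adj(B)\adj(A),$$
which is the assertion.

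The only point requiring care, and the main (though modest) obstacle, is the stability of $\succeq^\circ$ under these bookkeeping operations. It rests on the two identities $d\,c^\circ=(dc)^\circ$ (coming from property~(3) of the symmetry $\tau$) and $a^\circ\oplus b^\circ=(a\oplus b)^\circ$, together with the compatibility of $\succeq^\circ$ with the laws of $\SS$ recorded after Definition~\ref{orderdf}. Once Proposition~\ref{CBF} is granted, the remainder of the argument is routine.
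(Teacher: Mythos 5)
Your proof is correct and follows essentially the same route as the paper: express the adjoint as $Q(A^{\wedge n-1})^\T Q^{-1}$ via Remark~\ref{compinv}, apply the tropical Cauchy--Binet inequality (Proposition~\ref{CBF}) with $k=n-1$, and transport the relation through transposition and conjugation by $Q$. The only difference is that you spell out the (routine) stability of $\succeq^\circ$ under these operations, which the paper leaves implicit.
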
 

\begin{proof}
Using the properties in~\eqref{compinv1} of Remark~\ref{compinv}, this theorem follows from Proposition~\ref{CBF}:
\begin{align*}
&\adj(AB)=Q\big((AB)^{\wedge n-1}\big)^{\T}Q^{-1}\succeq^\circ Q(A^{\wedge n-1}B^{\wedge n-1})^{\T}Q^{-1}=\\
&Q(B^{\wedge n-1})^\T(A^{\wedge n-1})^{\T}Q^{-1}
=Q(B^{\wedge n-1})^{\T}Q^{-1}Q(A^{\wedge n-1})^{\T}Q^{-1}=\adj(B)\adj(A)\enspace .\qed \end{align*}
\renewcommand{\qedsymbol}{}
\end{proof}

\begin{cor}\label{eqadj} Let~$\SS$ be naturally ordered, and let~$A,B\in \SS^{n\times n}$.
If~$A$ is invertible, then
\begin{enumerate}
\item\label{eqadj-1}   Equality holds in Theorem~\ref{ADJ}, and~$A^\nabla=A^{ -1}.$

\item\label{eqadj-2} Equality holds in Proposition~\ref{CBF}, and~$(A^{ -1})^{\wedge k}=(A^{\wedge k})^{ -1}$.

\end{enumerate}
Moreover, similar assertions hold if $B$ is invertible.
\end{cor}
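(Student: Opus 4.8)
The plan is to reduce everything to the monomial structure of invertible matrices and then to upgrade the one-sided relations of Theorem~\ref{ADJ} and Proposition~\ref{CBF} to genuine equalities by exploiting antisymmetry. Since $\SS$ is naturally ordered, $\preccurlyeq$ is an order relation, hence so is $\succeq^\circ$ (and its matrix extension); thus, for each claimed equality $X=Y$, it suffices to produce the reverse relation $Y\succeq^\circ X$ to complement the one already supplied by the cited statement. I also record the elementary fact that $\succeq^\circ$, read entrywise on matrices, is preserved under multiplication on either side by a fixed matrix: if $X=Y\oplus Z$ with the entries of $Z$ in $\SS^\circ$, then $CX=CY\oplus CZ$ and the entries of $CZ$ again lie in $\SS^\circ$ because $\SS^\circ$ is an ideal.

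First I would settle the two auxiliary identities for a single invertible matrix. By Remark~\ref{inv}, $A$ is a monomial matrix $A=D_qP_\pi$ with $A^{-1}=D_{q^{-1}}P_{\pi^{-1}}$, so by Remark~\ref{compinv} both $\adj(A)=Q(A^{\wedge n-1})^{\T}Q^{-1}$ and $A^{\wedge k}$ are again monomial, hence invertible. To obtain $A^\nabla=A^{-1}$, note that $A^\nabla=\adj(A)\det(A)^{-1}$ is then invertible, so $\mathcal{I}_A=AA^\nabla$ is monomial; since $(\mathcal{I}_A)_{i,i}=\unit$ for all $i$ by Proposition~\ref{qasinv}, the unique nonzero entry of each row of $\mathcal{I}_A$ sits on the diagonal and equals $\unit$, forcing $\mathcal{I}_A=\mathcal{I}$ and hence $A^\nabla=A^{-1}$. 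For the compounds, the explicit description of $A^{\wedge k}$ for monomial $A$ in~\eqref{compinv2} of Remark~\ref{compinv} shows that $M\mapsto M^{\wedge k}$ is multiplicative on the group of monomial matrices; applied to $AA^{-1}=A^{-1}A=\mathcal{I}$, together with $\mathcal{I}^{\wedge k}=\mathcal{I}$, this yields $A^{\wedge k}(A^{-1})^{\wedge k}=(A^{-1})^{\wedge k}A^{\wedge k}=\mathcal{I}$, that is $(A^{-1})^{\wedge k}=(A^{\wedge k})^{-1}$.

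With these in hand, the equalities follow by the antisymmetry trick. For Proposition~\ref{CBF}, I would write $B=A^{-1}(AB)$ and apply the inequality to get $B^{\wedge k}\succeq^\circ (A^{-1})^{\wedge k}(AB)^{\wedge k}$; left-multiplying by $A^{\wedge k}$ and using $(A^{-1})^{\wedge k}=(A^{\wedge k})^{-1}$ gives $A^{\wedge k}B^{\wedge k}\succeq^\circ (AB)^{\wedge k}$, which together with the original $(AB)^{\wedge k}\succeq^\circ A^{\wedge k}B^{\wedge k}$ and antisymmetry yields equality. For Theorem~\ref{ADJ}, I first record $\det(A^{-1})=\det(A)^{-1}$ (equality in Theorem~\ref{detAB} applied to $AA^{-1}=\mathcal{I}$) and deduce $\adj(A^{-1})=\det(A^{-1})A=\adj(A)^{-1}$ from $A^\nabla=A^{-1}$ applied to both $A$ and $A^{-1}$; then applying Theorem~\ref{ADJ} to $B=A^{-1}(AB)$ gives $\adj(B)\succeq^\circ\adj(AB)\adj(A^{-1})$, and right-multiplication by $\adj(A)$ produces $\adj(B)\adj(A)\succeq^\circ\adj(AB)$, again closing to equality. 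The case where $B$ is invertible is identical after writing $A=(AB)B^{-1}$ and multiplying on the opposite side, the scalar identities $B^\nabla=B^{-1}$, $(B^{-1})^{\wedge k}=(B^{\wedge k})^{-1}$ being obtained exactly as above.

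The main obstacle is not the formal manipulation but verifying that the inequalities of Theorem~\ref{ADJ} and Proposition~\ref{CBF} are tight precisely when a factor is invertible: this is where the lack of additive inverses bites, and it is resolved by the observation that monomial matrices carry no cancellation, so that $A^\nabla=A^{-1}$ and $(A^{-1})^{\wedge k}=(A^{\wedge k})^{-1}$ hold exactly rather than up to $\succeq^\circ$. One should be careful that the step ``$\unit$ on the whole diagonal forces $\mathcal{I}$'' genuinely uses the monomial (one nonzero per row) structure and not mere dominance, since over $\rmax$ a diagonal matrix below $\mathcal{I}$ need not equal $\mathcal{I}$.
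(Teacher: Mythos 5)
Your proof is correct, and while the overall skeleton (reduce to the monomial form $D_qP_\pi$, then close the $\succeq^\circ$ inequalities by antisymmetry via the sandwich $B=A^{-1}(AB)$) coincides with the paper's, two steps are handled genuinely differently. First, for $A^\nabla=A^{-1}$ the paper computes $\adj$ and $\det$ multiplicatively on the factors $D_q$ and $P_\pi$, leaning on equality in Theorem~\ref{ADJ} which it imports from \cite[Lemma~3.6]{AGG14}; you instead observe that $\mathcal{I}_A=AA^\nabla$ is a product of monomial matrices, hence monomial, and that a monomial matrix with $\unit$ everywhere on the diagonal (Proposition~\ref{qasinv}) must equal $\mathcal{I}$, whence $A^\nabla=A^{-1}$ after left-multiplying by $A^{-1}$. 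Second, for equality in Theorem~\ref{ADJ} the paper simply cites \cite{AGG14}, whereas you re-derive it by the same two-sided argument used for Proposition~\ref{CBF}, using $\adj(A^{-1})=\det(A)^{-1}A=\adj(A)^{-1}$. Your route is more self-contained (no external lemma), and correctly reverses the paper's logical order: you get $A^\nabla=A^{-1}$ first and use it to prove equality in Theorem~\ref{ADJ}, rather than the other way round; the paper's route is shorter on the page but outsources the key multiplicativity fact. The remaining pieces --- $(A^{-1})^{\wedge k}=(A^{\wedge k})^{-1}$ via the explicit formulas of Remark~\ref{compinv}\eqref{compinv2}, and the closing antisymmetry argument for Proposition~\ref{CBF} --- match the paper essentially verbatim, and your preliminary remarks (that $\succeq^\circ$ is an order because $\SS$ is naturally ordered, and that it is stable under multiplication by a fixed matrix because $\SS^\circ$ is an ideal) supply exactly the compatibility facts the paper uses implicitly.
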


\begin{proof}Let~$A$ be invertible. Since~$\SS$ is naturally ordered, it can 
be written~$A=D_{q}P_\pi$. So~$A^{-1}=P_{\pi^{-1}}D_{q^{-1}}$.

\eqref{eqadj-1} 
From~\cite[Lemma 3.6]{AGG14}, equality holds in Theorem~\ref{ADJ}
when $A$ or $B$ is invertible. Therefore, using Theorem~\ref{detAB},
and Definition~\ref{adj}, we get 
\begin{eqnarray*}
A^\nabla=\det(P_\pi)^{-1} \adj(P_\pi)\det(D_{q})^{-1}\adj(D_{q})=P^{-1}_\pi D_{q^{-1}} 
=A^{-1}.\end{eqnarray*}

\eqref{eqadj-2}
From~\eqref{compinv2} of Remark~\ref{compinv} applied to~$A$ and~$A^{-1}$,~$k\in\{0,\ldots, n\}$ being fixed, 
we have~$A^{\wedge k}=D_{h}D_{g^{k,\pi}}P_{\pi^{(k)}}$ and
$$(A^{-1})^{\wedge k}=P_{(\pi^{(k)})^{-1}}D_{f^{k,\pi^{-1}}}D_{h^{-1}},\;\text{with}\; h_I=
\bigodot_{i\in I}q_i\; \text{for}\; I\in\mathcal{P}_k(S)\enspace .$$
Since~$g^{k,\pi}=f^{k,\pi^{-1}}$ and $D_{g^{k,\pi}}$ is involutory, 
we get~$(A^{\wedge k})^{-1}=(A^{-1})^{\wedge k}$.

Now, applying Proposition~\ref{CBF} to $A$ and $B$ and then to 
$A^{-1}$ and $AB$, we get that
$$(A B)^{\wedge k}\succeq^\circ A^{\wedge k} B^{\wedge k}
\succeq^\circ A^{\wedge k}(A^{-1})^{\wedge k} (AB)^{\wedge k}\text{ and }A^{\wedge k}
(A^{-1})^{\wedge k} (AB)^{\wedge k}=
(AB)^{\wedge k}.$$ Since $\succeq^\circ$ is an order, we deduce
the equality.

\end{proof}

Note that when $k=n$, Proposition~\ref{CBF} and its equality 
were already stated in Theorem~\ref{detAB}.

\begin{lem} \label{n-1def}If~$A\in\TT^{n\times n}$ is definite, then~$A^{\wedge k}_{_{I,I}}=\unit,\forall I
\in\mathcal{P}_k([n]), \forall k\in[n].$\end{lem}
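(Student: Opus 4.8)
The plan is to observe that the diagonal compound entry $A^{\wedge k}_{I,I}$ is nothing but the determinant of the principal submatrix of $A$ indexed by $I$, and then to read off the result from the combinatorial estimate already recorded in Lemma~\ref{dfs}. Unwinding the definition of the compound matrix with $J=I$, and using that $\allperm_{I,I}=\allperm_I$ with signs computed with respect to the single order of $I$, we have
\[ A^{\wedge k}_{I,I}=\bigoplus_{\sigma\in\allperm_I}\sign(\sigma)\bigodot_{i\in I}a_{i,\sigma(i)}\enspace . \]

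First I would isolate the identity permutation $\Id\in\allperm_I$. Since $A$ is definite, $a_{i,i}=\unit$ for every $i$, so its contribution is $\sign(\Id)\bigodot_{i\in I}a_{i,i}=\unit$. For every $\sigma\in\allperm_I\setminus\{\Id\}$, Lemma~\ref{dfs}\eqref{dfs1} gives exactly $\sign(\sigma)\bigodot_{i\in I}a_{i,\sigma(i)}\curlyeqprec\unit$, which by Definition~\ref{orderdf} means $\unit\oplus\sign(\sigma)\bigodot_{i\in I}a_{i,\sigma(i)}=\unit$.

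It then remains to absorb all these off-identity terms into the identity term. Writing $A^{\wedge k}_{I,I}=\unit\oplus\bigoplus_{\sigma\ne\Id}\sign(\sigma)\bigodot_{i\in I}a_{i,\sigma(i)}$ and adding the finitely many summands one at a time, each addition leaves the running sum equal to $\unit$ by the absorbing property above (together with the commutativity of $\oplus$); a short induction then yields $A^{\wedge k}_{I,I}=\unit$, for every $I\in\mathcal{P}_k([n])$ and every $k\in[n]$ (the case $k=n$ reducing to $\det(A)=\unit$). The only point requiring any care --- and the reason the statement still needs an argument --- is that this absorption must not appeal to idempotency of the ambient semiring $\TT$, which fails for instance over $\supertropical$; instead one uses solely that each non-identity signed diagonal product is $\curlyeqprec\unit$, so that it is swallowed by a running sum already equal to $\unit$.
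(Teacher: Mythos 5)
Your proposal is correct and follows essentially the same route as the paper: both isolate the identity permutation (contributing $\unit$ since the diagonal entries of a definite matrix are $\unit$) and invoke Point~\eqref{dfs1} of Lemma~\ref{dfs} to conclude that every other signed permutation is $\curlyeqprec\unit$ and hence absorbed, giving $A^{\wedge k}_{I,I}=\unit\oplus\bigoplus_{\sigma\neq\Id}\sign(\sigma)\bigodot_{i\in I}a_{i,\sigma(i)}=\unit$. Your explicit one-term-at-a-time absorption, avoiding any appeal to idempotency of $\oplus$, is a careful spelling-out of what the paper leaves implicit, not a different argument.
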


\begin{proof} From Point~\eqref{dfs1} of Lemma~\ref{dfs}, 
$$A^{\wedge k}_{_{I,I}}=\unit\oplus\bigoplus_{\sigma\in\allperm_I\setminus\{\Id\}}
\sign(\sigma)\bigodot_{i\in I}A_{i,\sigma(i)}=\unit\enspace .\qed$$
\renewcommand{\qedsymbol}{}
\end{proof}

\begin{rem}
Notice that~$A^{\wedge k}$ is not necessarily definite when~$A$ is definite. 

For example, 
$\left(\begin{array}{ccc}\unit&\unit&\zero\\\zero&\unit&\unit\\
\unit&\zero&\unit\end{array}\right)^{\wedge 2}
=\left(\begin{array}{ccc}\unit&\unit&\unit\\\ominus\unit&\unit&\unit\\
\ominus\unit&\ominus\unit&\unit\end{array}\right)$ over~$\mathbb{SR}_{\max}$, which is singular.
\end{rem}

\begin{df} Let~$A$ be a matrix over~$\TT$ and let $A^0=\mathcal{I}$. If~$\bigoplus_{k\geq 0}A^k$ 
converges to a matrix over~$\TT$, then this matrix is defined as the \myemph{Kleene star} of~$A$,  
denoted by~$A^*$.
\end{df}

\begin{thm}\label{DF}  Let $A=\mathcal{I}\ominus B$  over $\TT$,
with $B_{i,i}=\zero$, $i\in [n]$. Assume that $A$ is definite.
Then $|A|^*$ exists and is definite, the weight of every cycle in $B$
is $\curlyeqprec\ominus \unit$,
 and \begin{equation}\label{**}|B|^*=|A|^*=|A^{ k}|=|\adj(A)|=
|A^\nabla|=|A^{\nabla\nabla}|,\ \forall k\geq n-1\enspace.\end{equation}
If $A^*$ exists, then $|A^*|=|A|^*$.
Moreover,   if $\TT$ is idempotent, then  \begin{equation}\label{***}A^{**}=A^*=
(\ominus B)^*\enspace .\end{equation}
If the weight of every cycle in $B$ is  $\curlyeqprec\unit$
(in particular if $\ominus \unit=\unit$ or if
the modulus of the weight of every cycle in $B$ is  
strictly dominated by $\unit$), $\TT$ being 
not necessarily idempotent, then $B^*$ exists and \begin{equation}\label{starnabla}A^\nabla=B^*.
\end{equation}
\end{thm}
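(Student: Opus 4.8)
The plan is to prove the modulus equalities, the existence of $|A|^*$, and the two idempotent/star identities by passing to the totally ordered idempotent semifield $\Mod$ and invoking the classical max-plus theory of definite matrices, while the two statements that genuinely involve the symmetry — the cycle bound and $A^\nabla=B^*$ — are handled by the sign bookkeeping of Lemma~\ref{dfs} and Corollary~\ref{eps}. I begin with the cycle bound. Any cycle of $B$ supported on $m\geq 2$ nodes is, read inside $A=\mathcal{I}\ominus B$, a cycle $\cli\in C_\sigma$ with $|\cli|=m$ whose entries are $A_{j,\sigma(j)}=\ominus b_{j,\sigma(j)}$, so $\bigodot_{j\in\cli}A_{j,\sigma(j)}=(\ominus\unit)^{|\cli|}\bigodot_{j\in\cli}b_{j,\sigma(j)}$. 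Using $(\ominus\unit)^2=\unit$, Point~\eqref{dfs2} of Lemma~\ref{dfs} reads $\ominus\bigodot_{j\in\cli}b_{j,\sigma(j)}=(\ominus\unit)^{|\cli|-1}\bigodot_{j\in\cli}A_{j,\sigma(j)}\curlyeqprec\unit$, and multiplying by $\ominus\unit$ gives $\bigodot_{j\in\cli}b_{j,\sigma(j)}\curlyeqprec\ominus\unit$, which is the asserted bound.

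For the existence and definiteness of $|A|^*$ and for~\eqref{**}, I would work over $\Mod$. Since $|\det(A)|=\per(|A|)=\unit$ and the diagonal of $|A|$ is $\unit$, the matrix $|A|$ is definite over $\Mod$; hence all its cycles have weight $\preceq\unit$, so $|A|^*=\bigoplus_{k=0}^{n-1}|A|^k=|A|^{n-1}$ exists and is definite, and $|A|^k=|A|^*$ for all $k\geq n-1$. As $|\cdot|$ is a semiring morphism, $|A^k|=|A|^k=|A|^*$ for $k\geq n-1$; and since $|A|=\mathcal{I}\oplus|B|$ in the idempotent $\Mod$, one has $|A|^*=(\mathcal{I}\oplus|B|)^*=|B|^*$. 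The combinatorial core is the identity $|\adj(M)|=|M|^*$ valid whenever $|M|$ is definite: indeed $|\adj(M)_{i,j}|=\per(|M|_{(j,i)})$, and writing a bijection of $|M|_{(j,i)}$ as an elementary path from $i$ to $j$ together with a permutation of the remaining nodes (whose cycles are all $\preceq\unit$ and are optimized by loops of weight $\unit$) shows this permanent equals the heaviest $i\to j$ walk $(|M|^*)_{i,j}$. Applying this to $M=A$, together with $A^\nabla=\adj(A)$ (because $\det(A)=\unit$), gives $|\adj(A)|=|A^\nabla|=|A|^*$. Finally $A^\nabla$ has unit diagonal, since $\adj(A)_{i,i}=A^{\wedge n-1}_{\{i\}^c,\{i\}^c}=\unit$ by Lemma~\ref{n-1def}, and its modulus $|A^\nabla|=|A|^*$ is definite; hence $|\adj(A^\nabla)|=|A^\nabla|^*=(|A|^*)^*=|A|^*$ and, using $|\det(A^\nabla)|=\unit$, the morphism property yields $|A^{\nabla\nabla}|=|A|^*$ whenever $A^{\nabla\nabla}$ is defined.

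For the star identities over $\TT$, if $A^*$ exists then $|A^*|=\bigoplus_k|A^k|=\bigoplus_k|A|^k=|A|^*$, because the modulus is an order-preserving morphism compatible with the convergent suprema defining the Kleene star. When $\TT$ is idempotent, the semiring identities $(\mathcal{I}\oplus M)^*=M^*$ and $M^{**}=M^*$ give $A^*=(\mathcal{I}\oplus(\ominus B))^*=(\ominus B)^*$ and $A^{**}=A^*$, the partial sums stabilizing since every cycle has modulus $\preceq\unit$ and $\SS^\circ$ is idempotent by Property~\ref{propnew}.

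For~\eqref{starnabla}, the key observation is that the extra hypothesis that every cycle of $B$ is $\curlyeqprec\unit$, together with the already-proved $\curlyeqprec\ominus\unit$, forces every cycle weight to have modulus strictly below $\unit$ unless $\ominus\unit=\unit$, since no element of modulus $\unit$ can be $\curlyeqprec$ both $\unit$ and $\ominus\unit$. In either case $B^*=\bigoplus_{k\geq0}B^k$ exists, and every walk that is not an elementary path is strictly dominated, hence vanishes under $\oplus$ by Property~\ref{prop1}, so $B^*_{i,j}=\bigoplus_{\text{elem. }i\to j\text{ paths}}\bigodot b$. Expanding $A^\nabla_{i,j}=\adj(A)_{i,j}$ via~\eqref{formulaadjoint}, substituting $A_{s,t}=\ominus b_{s,t}$, and using $(\ominus\unit)^{2\ell}=\unit$ together with Corollary~\ref{eps} to see that the sign of the length-$\ell$ open path cancels the $(\ominus\unit)^\ell$ produced by the substitution, the surviving terms are again exactly the elementary $i\to j$ paths of weight $\bigodot b$, whence $A^\nabla=B^*$. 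The main obstacle is precisely this last sign computation: one must verify that the sign of the open path combines with the deleted diagonal block to give $\unit$, and that every mixed path-plus-cycle contribution is strictly dominated, so that the sign discrepancy between $A^\nabla$ (disjoint cycles carrying an extra $\ominus$) and $B^*$ (cycles attached along a walk) never reaches the surviving terms — which is exactly what the cycle hypothesis secures.
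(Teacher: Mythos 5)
Your proposal is correct and follows the same overall architecture as the paper's proof: the cycle bound from Point~(2) of Lemma~\ref{dfs}, reduction of the modulus identities~\eqref{**} to the classical theory of definite matrices over $\Mod$ via the morphism property of the modulus, the binomial expansion of $(\mathcal{I}\ominus B)^k$ for the idempotent case, and the elementary-path characterization of $A^\nabla$ with the sign cancellation $\sign(\pi)\,(\ominus\unit)^{k_\pi-1}=\unit$ for~\eqref{starnabla}. The one genuine difference is that where the paper simply cites earlier work for the chain $|B|^*=|A|^*=|A^k|=|\adj(A)|=|A^\nabla|=|A^{\nabla\nabla}|$, you prove the key identity $|\adj(M)|=|M|^*$ for definite $|M|$ combinatorially from scratch (elementary path plus a permutation of the remaining nodes optimized by loops); this makes the argument self-contained at the cost of redoing known results, and your derivation is sound. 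One small justification should be repaired: in establishing $B^*_{i,j}=\bigoplus_{\text{elem.\ paths}}\bigodot b$ you assert that non-elementary walks are \emph{strictly} dominated and hence vanish by Property~\ref{prop1}. When $\ominus\unit=\unit$ a cycle of $B$ may have weight exactly $\unit$ (for instance over $\rmax$), so such walks need not be strictly dominated in modulus; the correct absorption argument, which is the one the paper uses, is that a cycle weight $c\curlyeqprec\unit$ gives $cw\curlyeqprec w$ by compatibility of $\curlyeqprec$ with multiplication, and $\curlyeqprec$-terms are absorbed by the definition of $\curlyeqprec$. Your auxiliary observation that the two bounds $\curlyeqprec\unit$ and $\curlyeqprec\ominus\unit$ force strict modulus domination when $\ominus\unit\neq\unit$ is correct (it follows from Property~\ref{prp1}) but becomes unnecessary once the $\curlyeqprec$-absorption is used uniformly.
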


\begin{proof}
Since $A$ is definite and $A_{i,j}=\ominus B_{i,j}$ when $i\neq j$, 
Point~\eqref{dfs2} of Lemma~\ref{dfs} 
implies that the weight of every cycle in $B$
is $\curlyeqprec\ominus \unit$.
Since the modulus is a morphism, we have
$\det(|A|)=|\det(A)|$,  $|A|$ is definite when $A$ is definite, 
$|A^k|=|A|^k$, $|\adj(A)|=\adj(|A|)$,
$|A^\nabla|=(|A|)^\nabla$, $|A^{\nabla\nabla}|=(|A|)^{\nabla\nabla}$,
and $|A^*|=|A|^*$ when both matrices exist.
So to show the two first assertions, it is sufficient to show
that $A^*$ exists and that~\eqref{**} holds if $A$ is a definite
matrix over $\Mod$.
In this case, the existence of $A^*$ and 
the first and second equalities of~\eqref{**} date back to the~60's (see ~\cite{YL}), and 
have received several proofs, using various techniques (see for~instance~\cite[Theorems~2 and~6]{PAFM}
and~\cite[Theorem~3.9]{AGG14}). The third equality in~\eqref{**} is true by definition, since~$A$ is 
definite, and the last equality has been proved in~\cite[Lemma~6.7 and Claim~6.8]{FTM}. The main 
argument is made due to Lemma~\ref{dfs}, by factoring  into cycles. As a result, the diagonal entries of 
the $A^{\nabla\nabla}$ are~$\unit$, and the~$i,j$ off-diagonal entry is  the sum of dominant elementary 
paths from~$i$~to~$j$. 

When $\TT$ is idempotent, then the equalities in~\eqref{***} hold since
$$A^k=(\mathcal{I}\ominus B)^k=\bigoplus_{m=0}^k  (\ominus B)^{m}\ \text{ implies }\ A^*=
\bigoplus_{k\geq 0}A^k=\bigoplus_{k\geq 0}(\mathcal{I}\ominus B)^k=(\ominus B)^*.$$

Assume now that the weight of every cycle in $B$ is  $\curlyeqprec\unit$.
Because it is already $\curlyeqprec\ominus\unit$, this holds when 
$\ominus \unit=\unit$. Also, from Property~\ref{prop1},
this also holds when the modulus of the weight of every cycle in $B$ is  
strictly dominated by $\unit$.
By Point~\eqref{dfs3} of Lemma~\ref{dfs} and~\eqref{formulaadjoint}, 
 we get that the~$i,j$ entry of $A^\nabla$ is the sum
$\bigoplus\sign(\pi)A_{i,\pi(i)}\cdots A_{\pi^{-1}(j),j},$ where~$\pi$ is  a permutation s.t.~$\pi(j)=i$  
with at most one nontrivial cycle. Denote $k_{\pi}$ the length of the nontrivial cycle of $\pi$, with $k_{\pi}=1$
 if there is no such cycle, then $\sign(\pi)=(\ominus\unit)^{k_{\pi}-1}$.
Since $A_{i,j}=\ominus B_{i,j}$ when $i\neq j$, 
this implies that the~$i,j$ entry of $A^\nabla$ is the sum of 
 the weights of all elementary paths from $i$ to $j$ with respect to the 
weight matrix $B$, when $i\neq j$, and that
it is equal to $\unit$ if $i=j$.

By definition, the~$i,j$ entry of $B^*$ is the sum of the weights of all paths from $i$ to $j$ with respect to the 
weight matrix $B$ (when this sum converges).
Since any path  from $i$ to $j$ is the product of an elementary path 
from $i$ to $j$ and of not necessarily disjoint cycles,
and since the weight of every cycle in $B$ is  $\curlyeqprec\unit$, 
we deduce that  the~$i,j$ entry of $B^*$ is equal
to the sum of the weights of elementary paths from $i$ to $j$ when $i\neq j$,
or to $\unit$ when $i=j$, and that $B^*$ exists.
\end{proof}

\begin{thm}[Frobenius property. See~\protect{\cite[Remark~1.3]{STMA}}] \label{FP} 
If $\SS=\Mod$ or $\SS=\supertropical$, then 
 $$(a\oplus b)^{ n}=a^{ n}\oplus b^{ n},\ \forall a,b\in \SS\enspace . $$
\end{thm}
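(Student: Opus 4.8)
The plan is to dispose of the two cases of the hypothesis separately, the idempotent semifield being elementary and the supertropical semiring requiring a short combinatorial argument. Throughout I may take $n\ge 1$ (the exponent being a matrix dimension), and by commutativity of $\oplus$ I may assume that $a$ dominates $b$. First suppose $\SS=\Mod$ is a totally ordered idempotent semifield. Here $\oplus$ is the join for the (total) natural order $\preccurlyeq$, so $a\oplus b$ is simply the $\preccurlyeq$-larger of $a,b$; assuming $a\succcurlyeq b$ gives $(a\oplus b)^n=a^n$. It then remains only to check that $x\mapsto x^n$ is monotone, i.e.\ $a\succcurlyeq b\Rightarrow a^n\succcurlyeq b^n$. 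This is immediate: on $\SS^*=\SS\setminus\{\zero\}$, which is a totally ordered group under $\odot$, one multiplies the inequality $a\succcurlyeq b$ by itself $n$ times, and if $b=\zero$ then $b^n=\zero\preccurlyeq a^n$ trivially. Hence $a^n\oplus b^n=a^n=(a\oplus b)^n$.

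Now suppose $\SS=\supertropical$; the case $n=1$ is trivial, so take $n\ge 2$. I would begin from the expansion valid in any commutative semiring, namely that $(a\oplus b)^n$ is the $\oplus$-sum, over $k=0,\dots,n$, of $\binom nk$ copies of $a^k b^{\,n-k}$. The supertropical-specific ingredient is that adding any element to itself at least twice returns its ghost, $x\oplus\cdots\oplus x=x^\nu$; since $\binom nk\ge 2$ precisely for $0<k<n$ while the endpoints keep multiplicity one, this yields
$$(a\oplus b)^n=a^n\oplus b^n\oplus\bigoplus_{0<k<n}\big(a^k b^{\,n-k}\big)^\nu\enspace .$$
It then suffices to show that the ghost cross terms are absorbed by $a^n\oplus b^n$, which I do by comparing moduli under $|\cdot|\colon\supertropical\to\rmax$, assuming $|a|\succeq|b|$.

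If $|a|\succ|b|$, then each cross term has modulus $|a|^k|b|^{\,n-k}\prec|a|^n=|a^n|$ for $0<k<n$, so Proposition~\ref{prop1} absorbs every cross term into $a^n$, whence $a^n\oplus b^n=a^n=(a\oplus b)^n$. If instead $|a|=|b|=v$ (the case $v=\zero$ forcing $a=b=\zero$ being trivial), then $a^n$, $b^n$ and every cross term share the common modulus $v^n$; since a ghost summand of modulus $v^n$ is present, the defining addition rule of $\supertropical$ collapses the entire sum to $(v^n)^\nu$, and the same rule shows $a^n\oplus b^n=(v^n)^\nu$ in each configuration of tangibility of $a^n,b^n$ (two equal tangibles, one tangible and one ghost, or two ghosts). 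Thus the two sides coincide. The only genuinely delicate point is this last tie case, where the tangible/ghost status of the extreme terms must be tracked in order to identify $a^n\oplus b^n$ correctly as the ghost $(v^n)^\nu$; everything else reduces to monotonicity and strict domination.
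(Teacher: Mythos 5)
Your proof is correct. Note first that the paper itself gives no proof of Theorem~\ref{FP}: it is quoted from the reference cited as \cite[Remark~1.3]{STMA}, so there is no in-paper argument to compare yours against. On the merits: the idempotent-semifield case is exactly the expected one-liner (monotonicity of $x\mapsto x^n$, which indeed follows from distributivity, or from the total order on the group $\SS^*$). For $\supertropical$, your binomial expansion with multiplicities is valid in any commutative semiring, the observation that $\binom{n}{k}\ge n\ge 2$ for $0<k<n$ correctly turns every cross term into a ghost via $x\oplus x\oplus\cdots\oplus x=x^\nu$, and the two-case analysis on moduli ($|a|\succ|b|$ absorbed by Property~\ref{prop1}; $|a|=|b|$ collapsing everything to $(v^n)^\nu$ by the supertropical addition rule, independently of the tangibility of $a^n$ and $b^n$) is complete. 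Your restriction to $n\ge 1$ is not merely cosmetic but genuinely necessary over $\supertropical$, since $(a\oplus b)^0=\unit$ while $a^0\oplus b^0=\unit^\nu$; the paper tacitly assumes this since the exponent is always a matrix dimension or a positive power.
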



\section{Jacobi's Identity}\label{jacobi}

Over a ring, one can easily obtain the  identity of  Jacobi (see for instance~\cite[Section~1.2]{Fallat&Johnson}) 
$$\det(A)\big(DA^{-1}D\big)^{\wedge n-k}_{J^c,I^c}= A^{\wedge k}_{_{I,J}},\text{ where }D_{i,i}=(-1)^i,\text{ and } 
0\text{ otherwise},$$ from the multiplicativity of the compound matrix  and the multiplicativity of  the determinant function. 
The following result shows that the same holds in rather general  semirings if $A$ is 
invertible.

\begin{lem}\label{IEP} 
Let $\SS$ be naturally ordered and~$A\in\SS^{n\times n}$ be invertible.
Consider the diagonal matrix $D$ over $\SS$ 
such that $D_{i,i}=(\ominus \unit)^i$, $i\in [n]$.
Then for every $I,J\in \mathcal{P}_k([n])$ 
$$\det(A) \big(DA^{-1}D\big)^{\wedge n-k}_{J^c,I^c}=(A)^{\wedge k}_{I,J},\ \ \forall k\in\{0,\ldots, n\}.$$
\end{lem}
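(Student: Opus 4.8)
The plan is to exploit the fact that, since $\SS$ is naturally ordered, an invertible matrix must be monomial, so that both sides of the claimed equality reduce to single, non-cancelling terms and the whole statement becomes a bookkeeping of signs. By Remark~\ref{inv} I may write $A=D_qP_\pi$ for some $\pi\in\allperm_{[n]}$ and some sequence $q$ with $q_i\in\SS^*$. A direct computation then shows that $A^{-1}$ is again monomial, namely $A^{-1}=D_{\hat q}P_{\pi^{-1}}$ with $\hat q_i=(q_{\pi^{-1}(i)})^{-1}$. Writing $s(T):=\sum_{i\in T}i$ for $T\subseteq[n]$, the diagonal matrix $D$ satisfies, by the diagonal case in Remark~\ref{compinv}, $D^{\wedge n-k}=D_e$ with $e_T=(\ominus\unit)^{s(T)}$.

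First I would turn the left-hand compound into a product of compounds. Since $D$ and $A^{-1}$ are invertible, Corollary~\ref{eqadj}\eqref{eqadj-2} gives equality in the tropical Cauchy--Binet formula, whence
$$\big(DA^{-1}D\big)^{\wedge n-k}=D^{\wedge n-k}\,(A^{-1})^{\wedge n-k}\,D^{\wedge n-k}.$$
Reading off the $(J^c,I^c)$ entry and using the monomial formula of Remark~\ref{compinv} applied to $A^{-1}=D_{\hat q}P_{\pi^{-1}}$, this entry is nonzero exactly when $\pi^{-1}[J^c]=I^c$, i.e.\ when $\pi[I]=J$, which is also the only case in which $A^{\wedge k}_{I,J}$ is nonzero. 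In that case I would obtain
$$\big(DA^{-1}D\big)^{\wedge n-k}_{J^c,I^c}=(\ominus\unit)^{s(I^c)+s(J^c)}\Big(\bigodot_{m\in I^c}q_m\Big)^{-1}\sign(\pi^{-1}|_{J^c}),$$
using $\bigodot_{i\in J^c}\hat q_i=\big(\bigodot_{m\in\pi^{-1}[J^c]}q_m\big)^{-1}=\big(\bigodot_{m\in I^c}q_m\big)^{-1}$.

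Next I would multiply by $\det(A)=\big(\bigodot_{i\in[n]}q_i\big)\sign(\pi)$ and simplify the scalar part via $\big(\bigodot_{i\in[n]}q_i\big)\big(\bigodot_{m\in I^c}q_m\big)^{-1}=\bigodot_{i\in I}q_i$, which already matches the scalar part of $A^{\wedge k}_{I,J}=\big(\bigodot_{i\in I}q_i\big)\sign(\pi|_I)$. The lemma then collapses to the sign identity
$$\sign(\pi)\,(\ominus\unit)^{s(I^c)+s(J^c)}\,\sign(\pi^{-1}|_{J^c})=\sign(\pi|_I).$$
To prove it I would first note that $\pi|_{I^c}$ and $\pi^{-1}|_{J^c}$ are mutually inverse bijections $I^c\leftrightarrow J^c$ (because $\pi[I^c]=J^c$), and that a bijection and its inverse have equal sign, their inversions being in bijective correspondence; hence $\sign(\pi^{-1}|_{J^c})=\sign(\pi|_{I^c})$. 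Then Proposition~\ref{resign}\eqref{resign-prop2}, together with $\sum_{i\in I}(i+\pi(i))=s(I)+s(J)$, gives $\sign(\pi)=\sign(\pi|_I)\sign(\pi|_{I^c})(\ominus\unit)^{s(I)+s(J)}$. Finally, since $s(I)+s(I^c)=s(J)+s(J^c)=s([n])$, the exponents $s(I^c)+s(J^c)$ and $s(I)+s(J)$ have the same parity, and $(\ominus\unit)^2=\unit$ forces $(\ominus\unit)^{s(I^c)+s(J^c)}=(\ominus\unit)^{s(I)+s(J)}$; substituting closes the identity.

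The step I expect to be the main obstacle is the sign bookkeeping, and within it two points in particular: pulling the diagonal sign matrix $D^{\wedge n-k}$ cleanly through the compound (which relies on the equality case of Cauchy--Binet, hence on invertibility), and reconciling the exponent $s(I^c)+s(J^c)$ produced by the conjugation with the exponent $s(I)+s(J)$ supplied by Proposition~\ref{resign}, the reconciliation resting on the complementation identity for $s$ together with $(\ominus\unit)^2=\unit$. Everything else—the monomial forms of $A$ and $A^{-1}$ and the cancellation of the $q$-products—is routine.
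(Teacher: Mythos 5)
Your proposal is correct and follows essentially the same route as the paper: reduce to the monomial form $A=D_qP_\pi$, read off the compounds of diagonal and permutation matrices from Remark~\ref{compinv}, and close with the sign identity coming from Proposition~\ref{resign}\eqref{resign-prop2} (the paper splits the computation into the separate cases $A=D_g$ and $A=P_\pi$ via Corollary~\ref{eqadj}, whereas you treat the monomial matrix in one pass, but the content is the same). Your extra care with $\sign(\pi^{-1}|_{J^c})=\sign(\pi|_{I^c})$ and the parity of $s(I^c)+s(J^c)$ versus $s(I)+s(J)$ just makes explicit what the paper leaves implicit.
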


\begin{proof}
Since $\SS$ is naturally ordered, an invertible matrix $A$
is necessarily of the form $A=D_{g}P_\pi$ with
$g_i\in \SS^*,\ i\in[n]$, and~$\pi\in\allperm_{[n]}$.
Moreover, using Point~\eqref{eqadj-2} of Corollary~\ref{eqadj}
(with $k$ and $n$) and $D^{-1}=D$, we
only need to prove the equality in the lemma when $A$ is the diagonal matrix $A=D_{g}$ and when $A$ is the
 permutation matrix $A=P_\pi$.

The lemma holds for an invertible diagonal matrix $A=D_{g}$, since,
when $I=J$, 
$$\det(A) \big(DA^{-1}D\big)^{\wedge n-k}_{J^c,I^c}= \bigg(
\bigodot _{i\in [n]}g_i \bigg) \big(D_g^{-1}\big)^{\wedge n-k}_{J^c,I^c}
=\bigg(\bigodot _{i\in [n]}g_i  \bigg)\bigg(\bigodot _{i\in I^c}g_i^{-1}\bigg)
=\bigodot_{i\in I}g_i=A^{\wedge k}_{_{I,J}},$$
and both sides of the equality are equal to $\zero$ otherwise. 

When $A=P_\pi$, then $A^{-1}=P_{\pi^{-1}}$. Using Point~\eqref{compinv2} of
Remark~\ref{compinv}, and then Proposition~\ref{resign}, 
we get
$$\det(A) \big(DA^{ -1}D\big)^{\wedge n-k}_{J^c,I^c}= \begin{cases}\sign(\pi) 
\bigodot _{_{j\in J^c}}(\ominus\unit)^{j} 
\sign(\pi^{-1}|_{J^c}) \bigodot _{_{i\in I^c}}(\ominus\unit)^{ i}&\text{if}\; \pi^{-1}[J^c]=
I^c\\ \zero&\text{otherwise}
\end{cases}$$

$$=\begin{cases}\sign(\pi|_{I})&\text{if}\;\pi[I^c]=J^c\\\zero&\text{otherwise}\end{cases}=
\begin{cases}
\sign(\pi|_{I})&\text{if}\; \pi[I]=J\\\zero&\text{otherwise}\end{cases}=A^{\wedge k}_{_{I,J}}\enspace .\qed $$
\renewcommand{\qedsymbol}{}
\end{proof}

The following theorem uses Corollary~\ref{eqadj} to generalize Lemma~\ref{IEP} to any 
nonsingular matrix, using its definite form.

\begin{thm}[Tropical Jacobi]\label{QDF}
 If~$A\in\TT^{n\times n}$ is nonsingular, then~$\forall I,J\in \mathcal{P}_k([n])$  
$$\det(A)\big(DA^\nabla D\big)^{\wedge n-k}_{J^c,I^c}=\det(A)\big(\det(A)^{-1}
A^{\wedge n-1}\big)^{\wedge n-k}_{\pi(I),\pi(J)}\succeq^\circ A^{\wedge k}_{_{I,J}},\ \
 \forall k\in\{0,\ldots ,n\},$$
where $\pi:\mathcal{P}_k([n])\rightarrow \mathcal{P}_{n-k}(\mathcal{P}_{n-1}([n]))$ is defined 
by $I\mapsto \{\{i\}^c:\ i\in I^c\}.$\end{thm}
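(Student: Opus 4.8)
The plan is to split the statement into its two assertions: the left equality, which is a purely formal reindexing, and the relation $\succeq^\circ$ on the right, which carries the content and will be reduced to the case of a definite matrix.

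First I would establish the equality $\det(A)\big(DA^\nabla D\big)^{\wedge n-k}_{J^c,I^c}=\det(A)\big(\det(A)^{-1}A^{\wedge n-1}\big)^{\wedge n-k}_{\pi(I),\pi(J)}$. The starting point is the entrywise identity $(DA^\nabla D)_{i,j}=\det(A)^{-1}A^{\wedge n-1}_{\{j\}^c,\{i\}^c}$, which follows from $A^\nabla=\adj(A)\det(A)^{-1}$, from the relation $\adj(A)_{i,j}=(\ominus\unit)^{i+j}A^{\wedge n-1}_{\{j\}^c,\{i\}^c}$ of Remark~\ref{compinv}, and from $D_{i,i}D_{j,j}=(\ominus\unit)^{i+j}$, the two sign factors cancelling since $(\ominus\unit)^{2(i+j)}=\unit$. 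Thus $DA^\nabla D$ is, up to the scalar $\det(A)^{-1}$, the matrix $A^{\wedge n-1}$ read through the complement bijection $i\mapsto\{i\}^c$ of $[n]$ onto $\mathcal P_{n-1}([n])$, composed with transposition. Taking $(n-k)$-compounds and using $(M^\T)^{\wedge m}=(M^{\wedge m})^\T$, the equality will follow once I check that reindexing a minor along the order-reversing map $i\mapsto\{i\}^c$ on both its row and column index sets leaves it unchanged; this holds because reversing the orders on the domain and codomain of a bijection preserves its number of inversions, hence its sign. The exponents of $\det(A)$ match on both sides, so this step is complete.

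For the relation $\succeq^\circ$ I would reduce to a definite matrix via the definite form. Write $A=P\bar A$ with $P$ an invertible monomial $D_gP_\rho$, $\bar A$ definite, and $\det(P)=\det(A)$. Since $P$ is invertible, equality holds in Theorem~\ref{ADJ} (Corollary~\ref{eqadj}), giving $\adj(A)=\adj(\bar A)\adj(P)$; combined with $\adj(P)=\det(A)P^{-1}$ and $\det(\bar A)=\unit$ this yields $A^\nabla=\bar A^\nabla P^{-1}$. Using $D^2=\mathcal I$ I can factor $DA^\nabla D=(D\bar A^\nabla D)(DP^{-1}D)$, and since $DP^{-1}D$ is invertible, Proposition~\ref{CBF} holds with equality (Corollary~\ref{eqadj}), so the $(n-k)$-compound factors exactly. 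Applying Lemma~\ref{IEP} to $P$ turns the $DP^{-1}D$ factor into $P^{\wedge k}$; on the other side $A^{\wedge k}=P^{\wedge k}\bar A^{\wedge k}$ holds with equality (again Corollary~\ref{eqadj}). Because $P^{\wedge k}$ is again monomial, both products collapse to the single index $L=\rho[I]$, and cancelling the common invertible factor $P^{\wedge k}_{I,\rho[I]}$ — legitimate since $\succeq^\circ$ is compatible with multiplication — reduces the statement to the definite inequality $(D\bar A^\nabla D)^{\wedge n-k}_{J^c,L^c}\succeq^\circ\bar A^{\wedge k}_{L,J}$.

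It remains to prove this inequality for definite $\bar A$, and this is where I expect the real difficulty. Here $\det(\bar A)=\unit$, so by the first step $(D\bar A^\nabla D)_{i,j}=\bar A^{\wedge n-1}_{\{j\}^c,\{i\}^c}$, which by Lemma~\ref{n-1def} equals $\unit$ on the diagonal and, by Theorem~\ref{DF} together with Point~\eqref{dfs3} of Lemma~\ref{dfs}, has off-diagonal entries whose dominant part is the signed sum of dominant elementary $\bar A$-paths from $i$ to $j$. Expanding, the left-hand side becomes a signed sum over bijections $\gamma\colon J^c\to L^c$ each of whose arcs is resolved into an elementary path, whose open paths run from $L\setminus J$ to $J\setminus L$ — exactly the endpoints of the open paths of the bijections $\beta\colon L\to J$ indexing $\bar A^{\wedge k}_{L,J}$. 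The hard part will be to set up the sign- and weight-preserving correspondence between the dominant term of $\bar A^{\wedge k}_{L,J}$ and a term of the left-hand sum, tracking signs through the concatenation of paths and cycles by means of Corollary~\ref{eps} and Proposition~\ref{resign}, and then to argue that every remaining monomial of the left-hand side is either strictly dominated by, or balances against, a companion monomial, so that the whole left-hand side equals $\bar A^{\wedge k}_{L,J}\oplus c^\circ$ for some $c$ — precisely the relation $\succeq^\circ$. Controlling these excess terms, that is showing they cancel in balanced pairs rather than producing a genuinely larger nonsingular contribution, is the crux, and it relies essentially on the cycle-weight bounds $\curlyeqprec\ominus\unit$ for definite matrices furnished by Lemma~\ref{dfs}.
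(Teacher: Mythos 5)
Your first two steps are sound and follow essentially the same route as the paper: the left equality is exactly the paper's reindexing of $\adj(A)=Q(A^{\wedge n-1})^{\T}Q^{-1}$ through the order-reversing bijection $i\mapsto\{i\}^c$, and your reduction to a definite matrix via $A=P\bar A$, Corollary~\ref{eqadj} and $D=D^{-1}$, with cancellation of the invertible monomial factor $P^{\wedge k}_{I,\rho[I]}$, is a correct (and slightly more explicit) version of the paper's reduction. Up to that point the proposal is fine.

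The genuine gap is that the definite case, which you correctly identify as ``the crux,'' is never actually proved: you state what would need to be established --- a sign- and weight-preserving embedding of the terms of $\bar A^{\wedge k}_{L,J}$ into $\big(\bar A^{\wedge n-1}\big)^{\wedge n-k}$, plus a pairing of the excess monomials into balanced or dominated pairs --- but you give no construction and no argument that such a pairing exists. This is where essentially all of the work in the paper's proof lives. Concretely, one must (i) take an arbitrary bijection $\tau\in\allperm_{I,J}$ contributing to $A^{\wedge k}_{I,J}$, discard its cycles inside $I\cap J$ using Point~\eqref{dfs2}--\eqref{dfs3} of Lemma~\ref{dfs}, and convert each open path into a signed elementary bijection of $A^{\wedge n-1}_{\{\tau^{m_j}(j)\}^c,\{j\}^c}$ with the sign bookkeeping of Proposition~\ref{resign}; and (ii) classify every other \summand of $\big(A^{\wedge n-1}\big)^{\wedge n-k}_{\pi(I),\pi(J)}$ according to which of three structural conditions (endpoints lying in $I\cap J$, intermediate indices escaping $I\cap J$, or overlapping intermediate index sets) fails, and in each case exhibit an explicit companion term --- obtained by composing the outer bijection with a transposition $(i\;j)$ and re-splicing the concatenated paths via Corollary~\ref{eps} --- of opposite sign, handling separately the subcase where the concatenated path is non-elementary and factors as an elementary path times cycles of weight $\curlyeqprec\unit$ or $\ominus\unit$ (this is where Corollary~\ref{cormoins} and Property~\ref{propnew} are needed so that repeated balanced terms can be absorbed). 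None of this case analysis is present in your proposal; without it, there is no proof that the excess terms do not produce a nonsingular contribution strictly dominating $\bar A^{\wedge k}_{L,J}$. A smaller imprecision: matching only ``the dominant term'' of $\bar A^{\wedge k}_{L,J}$ and citing Theorem~\ref{DF} (whose description of $A^\nabla$ as $B^*$ needs the extra cycle hypothesis $\curlyeqprec\unit$) is not quite the right framing --- the argument must account for every signed bijection of $\bar A^{\wedge k}_{L,J}$ and rests on Lemma~\ref{dfs} alone.
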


\begin{proof}  
From Point~\eqref{compinv1} of Remark~\ref{compinv}, we have for all nonsingular matrices
$A$, $DA^\nabla D=\det(A)^{-1}DQ(A^{\wedge n-1})^\T Q^{-1}D$, with $Q$ as in 
Remark~\ref{compinv}. Then $DQ=P_\sigma$ and $Q^{-1}D=P_{\sigma}^{-1}$ where
$\sigma(i)=n+1-i$ or simply $\sigma(i)=\{i\}^c$ when $A^{\wedge n-1}$ is indexed
by the sets $\{i\}^c$. Since the bijection $\pi$ of the theorem satisfies
$\pi(I^c)=\sigma^{(n-k)}(I)$, we get the (first) equality of the theorem.

From Point~\eqref{eqadj-1}  of Corollary~\ref{eqadj}, Lemma~\ref{IEP} and the first 
equality of the theorem, the (second) inequality  of the theorem is true (and is an equality) for all invertible 
matrices. Then, using any definite form of a nonsingular matrix, $A=P\bar{A}$, and using 
Corollary~\ref{eqadj} and $D=D^{-1}$,
we see that it is sufficient to prove the inequality when~$A$ is definite, in which case it reduces to
\begin{equation}\label{ineq-when-definite}
\big(A^{\wedge n-1}\big)^{\wedge n-k}_{\pi(I),\pi(J)}\succeq^\circ A^{\wedge k}_{_{I,J}},\ \
 \forall k\in\{0,\ldots ,n\}\enspace .
\end{equation}
We thus assume now that $A=(a_{i,j})$ is definite.
Then, from Point~\eqref{dfs3} of Lemma~\ref{dfs}, we get that $A^{\wedge n-1}_{\{i\}^c,\{j\}^c}$ is the 
sum of signed elementary bijections, with the elementary path being from~$j$ to~$i$.
In particular, it is equal to $\unit$ when $i=j$.
Then, $\big(A^{\wedge n-1}\big)^{\wedge n-k}_{\pi(I),\pi(J)}$ is the sum of signed 
products of signed elementary bijections. 

In order to prove Inequality~\eqref{ineq-when-definite}, we shall show 
the following properties~:
\begin{enumerate}
\item\label{incl1} every signed bijection in $A^{\wedge k}_{_{I,J}}$ is a \summand in$\big(A^{\wedge n-1}\big)^{\wedge n-k}_{\pi(I),\pi(J)}$, 
\item\label{incl2}  every other \summand in~$\big(A^{\wedge n-1}\big)^{\wedge n-k}_{\pi(I),\pi(J)}$ 
reappears, with an opposite sign, creating an element of $\TT^\circ$
which is added to  $A^{\wedge k}_{_{I,J}}$.
\end{enumerate}
for every~$I, J\in\mathcal{P}_k([n])$, up to terms that are  $\curlyeqprec$ to other terms in the same sum (and then can be omitted).
Property~\eqref{incl2} means that an element in~$\mathcal{T}^\circ$
is added to  $A^{\wedge k}_{_{I,J}}$. Moreover,
due to Property~\ref{propnew} and Lemma~\ref{lem-propnew}, we do not need 
to count the number of times a term appear in~\eqref{incl2},
so that~\eqref{incl1} and~\eqref{incl2} are sufficient to 
prove~\eqref{ineq-when-definite}.

Proof of~\eqref{incl1}:
  If~$I= J$, from Lemma~\ref{n-1def}, $A^{\wedge k}_{_{I,I}}=
\unit,\ \forall I\in \mathcal{P}_k([n]),\forall k\in[n]$,
which  corresponds to the identity permutation in
 $\big(A^{\wedge n-1}\big)^{\wedge n-k}_{\pi(I),\pi(I)}$. 

Assume now that $I\neq J$.
   Using~\eqref{cycfact}, we factor every bijection in~$A^{\wedge k}_{_{I,J}}$ into disjoint 
 elementary open paths and  cycles
\begin{eqnarray}\label{tau}
\lefteqn{\sign(\tau)\bigodot_{\cli\in C_\tau}\bigodot_{j\in \cli}a_{j,\tau(j)}}
\\ \nonumber
& =& \sign(\tau)\bigodot_{\substack{\cli\in C_\tau:\\\cli\subseteq J}}\bigodot_{j\in \cli}a_{j,\tau(j)}
\bigodot_{j\in I\setminus J} a_{j,\tau(j)} a_{\tau(j),\tau^2(j)}\cdots 
a_{\tau^{m_j-1}(j),\tau^{m_j}(j)}\enspace,
\end{eqnarray}
where $\tau\in\allperm_{I,J}$, and 
$\tau^{m_j}(j)\in J\setminus I$ for all $j\in I \setminus J$.
Define $\sigma\in\allperm_{[n]}$ s.t.~$\sigma|_I=\tau$, $\sigma(\tau^{m_j}(j))=j\ \forall j\in I\setminus J$, and $\sigma|_{(I\cup J)^c}=\Id$ and let 
$K= \bigcup_{\cli\in C_\tau:\;\cli\subseteq J} \cli\subset I\cap J$. 
Using Proposition~\ref{resign}, we obtain 
\begin{align*}
\underbrace{\sign(\sigma|_{K})}_{\text{cycles of $\tau$}}\underbrace{\sign(\sigma|_{K^c})}_{\substack{\text{cycles induced from}\\\text{open paths of $\tau$}}}&=\sign(\sigma)\\
&=\sign(
\sigma|_I
)\sign(\sigma|_{I^c})(\ominus\unit)^{\sum_{i\in I} i+\sigma(i)}\\
&=\sign(\tau)\sign(\sigma|_{I^c})\bigodot_{j\in I\setminus J}(\ominus\unit)^{j+\sigma^{m_j}(j)}.
\end{align*}
From Point~\eqref{dfs3} of Lemma~\ref{dfs}, $\sign(\sigma|_{K})\bigodot_{
\cli\in C_\tau:\;\cli\subseteq J}\bigodot_{j\in \cli}a_{j,\tau(j)}\curlyeqprec\unit,$
 therefore~\eqref{tau} is either equal or $\curlyeqprec$ to
the following  bijection in which the cycles in $K$ are replaced by loops:
\begin{equation}\label{tau2}
\sign(\sigma|_{K^c})\sign(\sigma|_{I^c})\bigodot_{j\in I\setminus J}(\ominus\unit)^{j+\sigma^{m_j}(j)}
\bigodot_{j\in I\setminus J} a_{j,\tau(j)} a_{\tau(j),\tau^2(j)}\cdots 
a_{\tau^{m_j-1}(j),\tau^{m_j}(j)}\enspace.\end{equation}
For $j\in I\setminus J$, we take~$\sigma^{(j)}=(j\ \tau(j)\cdots \tau^{m_j}(j))\in\allperm_{[n]}$.
Note that $\sigma|_{K^c}$ is obtained as the composition of the $\sigma^{(j)},\ j\in I\setminus J$, hence $\sign(\sigma|_{K^c})=\bigodot_{j\in I\setminus J}\sign(\sigma^{(j)})$.
Moreover, using Proposition~\ref{resign} again, we get
$$\sign(\sigma^{(j)})\sign(\sigma^{(j)}|_{  \{  \sigma^{m_j}(j)  \}^c  })=(\ominus\unit)^{\sigma^{m_j}(j)+j}.$$
Therefore,~\eqref{tau2} is reduced to
\begin{equation}\label{1cont}\sign(\sigma|_{I^c})\bigodot_{j\in I\setminus J}
\underbrace{\sign(\sigma^{(j)}|_{\{\sigma^{m_j}(j)\}^c}) a_{j,\sigma(j)} a_{\sigma(j),
\sigma^2(j)}\cdots a_{\sigma^{m_j-1}(j),\sigma^{m_j}(j)}},\end{equation}
 where the  underbraced  expression is a signed elementary bijection in $A^{\wedge n-1}_{\{\sigma^{m_j}(j)\}^c,\{j\}^c}$ 
corresponding to the permutation
$\sigma^{(j)}$.

Since $\{j\}^c\in\pi(J)\setminus\pi(I)\ \Leftrightarrow\ j\in I
\setminus J \ \Leftrightarrow\ \tau^{m_j}(j)\in J\setminus I\ \Leftrightarrow\ 
\{\tau^{m_j}(j)\}^c\in\pi(I)\setminus\pi(J)$, the map 
$\pi(I)\setminus \pi(J)\rightarrow 
\pi(J)\setminus \pi(I)$, defined by $\{\sigma^{m_j}(j)\}^c\mapsto \{j\}^c$,
is a bijection, which is conjugate to the bijection $\sigma|_{ J\setminus I}$.
Moreover, by taking~$\rho|_{\pi(I)
\cap \pi(J)}=\Id$, it can be extended to a 
bijection~$\rho:\pi(I)\rightarrow \pi(J)$. 
Then,  $\sign(\rho|_{\pi(I)\setminus \pi(J)})=\sign(\sigma|_{J\setminus I})$
and since~$\sigma$ is the identity on $(I\cup J)^c$,
we get that $\sign(\rho)=\sign(\sigma|_{I^c})$.
Therefore, recalling that $A^{\wedge n-1}_{\{i\}^c,\{i\}^c}=\unit$ for all $i\in[n]$,
we get that ~\eqref{1cont} is a \summand in~$\big(A^{\wedge n-1}\big)^{\wedge n-k}_{\pi(I),\pi(J)}$.

Proof of~\eqref{incl2}:  
Consider the expression
\begin{equation}\label{sgnpt}\sign(\rho) \bigodot_{\{i\}^c\in \pi(I)}A^{\wedge n-1}_{\{i\}^c,\{q(i)\}^c} 
\enspace ,\end{equation} 
where~$\rho\in\allperm_{\pi(I),\pi(J)}$ and~$q:I^c\to J^c$ is s.t.~$\{q(i)\}^c=\rho(\{i\}^c)$.
Up to terms that are~$\curlyeqprec$ to other ones,
a \summand in~$\big(A^{\wedge n-1}\big)^{\wedge n-k}_{\pi(I),\pi(J)}$
is obtained by replacing~$A^{\wedge n-1}_{\{i\}^c,\{q(i)\}^c}$ in~\eqref{sgnpt},
  by 
a signed elementary bijection, with the single elementary path being from~$q(i)$ to~$i$.
In particular, when~$q(i)=i$, the latter is replaced by~$\unit$.
In view of the arguments above, 
 such a \summand in~$\big(A^{\wedge n-1}\big)^{\wedge n-k}_{\pi(I),\pi(J)}$
is also a bijection of~$A^{\wedge k}_{_{I,J}}$ if~$\rho$ and the signed
elementary bijections satisfy all of the following conditions:
\begin{enumerate}[label=(\alph*),ref=(\alph*)]
\item \label{condbij1}
for every $ i\in I^c, \; i\ne q(i)$, we have $\{i\}^c\in \pi(I)\setminus \pi(J)$ and $\{q(i)\}^c\in \pi(J)\setminus\pi(I)$ 
(which means~$i\in J\setminus I$ and $q(i)\in I\setminus J$), 

\item \label{condbij2}
for every $ i\in I^c, \; i\ne q(i)$, the intermediate indices of the signed elementary path of $A$ from $q(i)$ to $i$ 
(as a bijection in $A^{\wedge n-1}_{\{i\}^c,\{q(i)\}^c}$) are in $I\cap J$, 

\item \label{condbij3}
the sets of intermediate indices of the elementary paths in the nontrivial signed elementary bijections of $A$ in~\eqref{sgnpt} are disjoint.
\end{enumerate}

Note that these conditions are  satisfied in particular when $\rho$ is the
identity permutation and thus $I=J$.
We need to show that if a \summand 
in~$\big(A^{\wedge n-1}\big)^{\wedge n-k}_{\pi(I),\pi(J)}$
does not satisfy all of the above properties, then it
reappears with an opposite sign.

 \myemph{Condition~\ref{condbij1} fails.}~Then, there exists $i\in I^c, \; i\ne q(i)$,  s.t.~$\{i\}^c\in \pi(I)\cap \pi(J)$ 
(resp.~$\{q(i)\}^c\in \pi(I)\cap \pi(J)$),
which means  that~$i\in I^c\cap J^c$ and $\exists  j\ne q(j)=i\text{ s.t.~}\{q(j)\}^c\in \pi(I)\cap \pi(J)$ 
(resp.~$q(i)\in I^c\cap J^c$ and $\exists k\ne q^{-1}(k)=q(i)\text{ s.t.~}\{q^{-1}(k)\}^c\in \pi(I)\cap \pi(J)$).
W.l.o.g., we consider the first situation. 
Let $b$ be the product of the signed  elementary 
bijections in~$A^{\wedge n-1}_{\{i\}^c,\{q(i)\}^c}$ 
and~$A^{\wedge n-1}_{\{j\}^c,\{q(j)\}^c}$ respectively, and let
$p$ be the corresponding path, that is 
$p$ is the concatenation of the  elementary paths from~$q(i)$ to~$i$
and  from~$q(j)=i$ to~$j$ corresponding to the former
signed elementary bijections.
 If the path $p$ is elementary, then  Corollary~\ref{eps} implies that
 $b$ is a signed elementary bijection (or a  cycle
if $q(i)=j$) with path $p$. Then, $b$ 
appears in the factor~$A^{\wedge n-1}_{\{j\}^c,\{q(i)\}^c}A^{\wedge n-1}_{\{i\}^c,\{i\}^c}$
of the opposite sign bijection~$q\circ (i\;  j)$,
where $(i\; j)$ denotes the transposition of $i$ and $j$.
Then, the \summand considered initially reappears 
in~$\big(A^{\wedge n-1}\big)^{\wedge n-k}_{\pi(I),\pi(J)}$ with an opposite sign.

If the  path~$p$ is not elementary, it may be decomposed into an elementary path $p'$ from~$q(i)$ to~$j$ (or a loop if $q(i)=j$),
and the union of not necessarily disjoint cycles. 
Let $b'$ be the signed elementary bijection of $A^{\wedge n-1}_{\{j\}^c,\{q(i)\}^c}$
corresponding to $p'$ and $a$ be the product of the signed cycles 
corresponding to the cycles in $p$.
Then, $b=ab'$ or $b=\ominus a b'$.
Again, the signed elementary bijection $b'$
appears in the factor~$A^{\wedge n-1}_{\{j\}^c,\{q(i)\}^c}A^{\wedge n-1}_{\{i\}^c,\{i\}^c}$
of the opposite sign bijection~$q\circ (i\;  j)$.
So, the \summand of~$\big(A^{\wedge n-1}\big)^{\wedge n-k}_{\pi(I),\pi(J)}$
considered initially is equal to the product of
a term in~$\big(A^{\wedge n-1}\big)^{\wedge n-k}_{\pi(I),\pi(J)}$
corresponding to the bijection $q\circ (i\;  j)$, with a factor $a'$,
equal either to $a$ or to $\ominus a$.
From Lemma~\ref{dfs}, we have $a\curlyeqprec\unit$.
This implies that  $a'\curlyeqprec\unit$ or $a'=\ominus\unit$.
Indeed, if $a'=a$, then $a'\curlyeqprec\unit$, whereas
if $a'=\ominus a$, we get that $a'\curlyeqprec\ominus\unit$,
and Corollary~\ref{cormoins} shows that 
$a'\curlyeqprec\unit$ or $a'=\ominus\unit$.
Then, if $a'\curlyeqprec\unit$, the \summand considered initially is 
$\curlyeqprec$ to another one in~$\big(A^{\wedge n-1}\big)^{\wedge n-k}_{\pi(I),\pi(J)}$,
whereas if $a'=\ominus\unit$, it is equal to the opposite of another
term.

 \myemph{Condition~\ref{condbij1} holds, but condition~\ref{condbij2} fails.}~We consider an intermediate index~$i\in I^c$ (resp.~$i\in J^c$) 
 in the elementary path of the signed elementary bijection~$\tau$ in~$A^{\wedge n-1}_{\{j\}^c,\{q(j)\}^c}$.
Since~$\{i\}^c\in \pi(I)$ (resp.~$\{i\}^c\in \pi(J)$) and~$\rho\in\allperm_{\pi(I),\pi(J)}$, this index also appears as
 the last  (resp.~first) index of the elementary paths of the signed elementary bijections in~$A^{\wedge n-1}_{\{i\}^c,\{q(i)\}^c}$ 
(resp. $A^{\wedge n-1}_{\{q^{-1}(i)\}^c,\{i\}^c}$).
We shall only consider the first situation, since the second one can be handled 
similarly.
From Remark~\ref{maxpath}, an elementary path can be factored into two non-maximal elementary paths, each of 
which can be extended into an elementary path at its ends, and from Corollary~\ref{eps}, 
 the composition of the corresponding signed  elementary bijections is a signed  elementary bijection.
Let~$\sigma,\sigma_1,\sigma_2\in\allperm_{[n]}$ be such that~$\sigma|_{\{j\}^c}=
\tau,\ \sigma(j)=q(j),\ \sigma_1=(i\ \tau(i)\ \cdots\ \tau^{-1}(j)\ j)$
 and $\sigma_2=(q(j)\ \tau(q(j))\ \cdots\ \tau^{-1}(i)\ i)$.
From Corollary~\ref{eps}, the product of~$A^{\wedge n-1}_{\{i\}^c,\{q(i)\}^c}$ with  the signed elementary bijection~$\tau$  
in~$A^{\wedge n-1}_{\{j\}^c,\{q(j)\}^c}$ satisfies
\begin{equation}\label{compB}A^{\wedge n-1}_{\{i\}^c,\{q(i)\}^c}\ \sign(\tau)(a_{q(j),\tau(q(j))} \cdots 
 a_{\tau^{-1}(i),i} a_{i,\tau(i)} \cdots  a_{\tau^{-1}(j),j})=\end{equation}
$$\underbrace{A^{\wedge n-1}_{\{i\}^c,\{q(i)\}^c}\  \sign(\sigma_1|_{\{j\}^c})(a_{i,\sigma(i)} \cdots  a_{\sigma^{-1}(j),j})}_{(*)}\ \ 
\underbrace{\sign(\sigma_2|_{\{i\}^c})(a_{q(j),\sigma(q(j))} \cdots  a_{\sigma^{-1}(i),i}}_{\text{a signed elementary bijection in }
A^{\wedge n-1}_{\{i\}^c,\{q(j)\}^c}}).$$
Let $b$ be obtained by replacing $A^{\wedge n-1}_{\{i\}^c,\{q(i)\}^c}$
by a signed elementary bijection in $(*)$, and let $p$ be the
corresponding path, that is $p$ is the concatenation of
the path from $q(i)$ to $i$ corresponding to the
signed elementary bijection in $A^{\wedge n-1}_{\{i\}^c,\{q(i)\}^c}$,
and the path from $i$ to $j$ corresponding to $\sigma_1$.
If $p$ is elementary, then, from Corollary~\ref{eps} again,
$b$ is a signed elementary bijection in~$A^{\wedge n-1}_{\{j\}^c,\{q(i)\}^c}$,
then the product of the signed elementary bijections in the factor
$A^{\wedge n-1}_{\{i\}^c,\{q(i)\}^c}A^{\wedge n-1}_{\{j\}^c,\{q(j)\}^c}$ of $q$
reappears in the factor $A^{\wedge n-1}_{\{j\}^c,\{q(i)\}^c}A^{\wedge n-1}_{\{i\}^c,\{q(j)\}^c}$
of the  opposite sign bijection~$q\circ(i\ j)$. 
So as above, the \summand considered initially reappears 
in~$\big(A^{\wedge n-1}\big)^{\wedge n-k}_{\pi(I),\pi(J)}$ with an opposite sign.

If $p$ is not elementary, then it may be decomposed into an elementary path 
$p'$ from~$q(i)$ to~$j$ (note that 
we cannot have a loop due to Condition~\ref{condbij1}),
and the union of not necessarily disjoint cycles. 
Let $b'$ be the signed elementary bijection of $A^{\wedge n-1}_{\{j\}^c,\{q(i)\}^c}$
corresponding to $p'$ and $a$ be the product of the signed cycles 
corresponding to the cycles in $p$.
Then, $b=ab'$ or $b=\ominus a b'$, so
the product of the signed elementary bijections in the factor
$A^{\wedge n-1}_{\{i\}^c,\{q(i)\}^c}A^{\wedge n-1}_{\{j\}^c,\{q(j)\}^c}$ of $q$
is equal to the product of signed elementary bijections in 
the factor $A^{\wedge n-1}_{\{j\}^c,\{q(i)\}^c}A^{\wedge n-1}_{\{i\}^c,\{q(j)\}^c}$
of the  opposite sign bijection~$q\circ(i\ j)$,
times a factor equal to $a$ or $\ominus a$.
With the same arguments as in the first case (in which 
Condition~\ref{condbij1} fails),
we obtain that the \summand considered initially is either
$\curlyeqprec$ to another one in~$\big(A^{\wedge n-1}\big)^{\wedge n-k}_{\pi(I),\pi(J)}$,
or equal to the opposite of such a term.

 \myemph{Conditions~\ref{condbij1},\ref{condbij2} hold, but condition~\ref{condbij3} fails.} We consider~$i\in I\cap J$, such that $i$ is 
an intermediate index in the signed elementary bijections~$\tau$ in~$A^{\wedge n-1}_{\{j\}^c,\{q(j)\}^c}$
 and~$\psi$ in~$A^{\wedge n-1}_{\{t\}^c,\{q(t)\}^c}$. 
Similarly to the previous case,  elementary paths can be factored into  non-maximal   elementary paths, which can be
 extended into   elementary paths, where the composition of the corresponding signed elementary bijections 
is a signed elementary bijection. Let~$\sigma,\phi\in\allperm_{[n]}$ be such that~$\sigma|_{\{j\}^c}=\tau,
\ \sigma(j)=q(j),\ \phi|_{\{t\}^c}=\psi$ and~$\phi(t)=q(t)$.
From Corollary~\ref{eps}, we have 
$\tau=\sigma|_{\{j\}^c}=\sigma_2|_{\{i\}^c}\circ\sigma_1|_{\{j\}^c},$  
and~$\psi=\phi|_{\{t\}^c}=\phi_2|_{\{i\}^c}\circ\phi_1|_{\{t\}^c},$ where
\begin{align*}
&\sigma_1=(i\ \tau(i)\cdots
\tau^{-1}(j)\ j)\in\allperm_{[n]}\ ,\ \sigma_2=(q(j)\ \tau(q(j))\cdots\tau^{-1}(i)\ i)\in\allperm_{[n]},\\
&\phi_1=(i\ \psi(i)\cdots
\psi^{-1}(t)\ t)\in\allperm_{[n]}\ ,\ \phi_2=(q(t)\ \psi(q(t))\cdots\psi^{-1}(i)\ i)\in\allperm_{[n]}.\end{align*} 
As a result, the product $b$ of the corresponding signed elementary bijections
can be factored as follows~:
\begin{eqnarray}\label{compC}
\lefteqn{b=\sign(\tau)(a_{q(j),\tau(q(j))} \cdots  a_{\tau^{-1}(i),i}\ \  a_{i,\tau(i)} \cdots  a_{\tau^{-1}(j),j})}\\
\nonumber 
&&\odot \sign(\psi)(a_{q(t),\psi(q(t))} \cdots  a_{\psi^{-1}(i),i}\  \ a_{i,\psi(i)} \cdots  a_{\psi^{-1}(t),t})\\
&=& \sign(\sigma_2|_{\{i\}^c})(a_{q(j),\sigma(q(j))} \cdots \sign(\phi_1|_{\{t\}^c}) 
  a_{\sigma^{-1}(i),i}) (a_{i,\phi(i)} \cdots  a_{\phi^{-1}(t),t})\label{ptjt}\\
&& \odot \sign(\phi_2|_{\{i\}^c}) (a_{q(t),\phi(q(t))} \cdots  
a_{\phi^{-1}(i),i})\sign(\sigma_1|_{\{j\}^c})(a_{i,\sigma(i)} \cdots  a_{\sigma^{-1}(j),j}) \enspace.\label{pttj}\end{eqnarray}
Let $b_1$ and $b_2$ be equal to~\eqref{ptjt} and~\eqref{pttj} respectively,
and let $p_1$ and $p_2$ be the corresponding paths.
If $p_1$ and $p_2$ are elementary paths, then $b_1$ and $b_2$ are 
signed elementary bijections in~$A^{\wedge n-1}_{\{t\}^c,\{q(j)\}^c}$
and~$A^{\wedge n-1}_{\{j\}^c,\{q(t)\}^c}$ respectively.
So their product $b$ appears
 in the factor~$A^{\wedge n-1}_{\{t\}^c,\{q(j)\}^c}A^{\wedge n-1}_{\{j\}^c,\{q(t)\}^c}$,
of the  opposite sign bijection $\rho\circ(j\ t)$.

If $p_1$ or $p_2$ (or both) is not elementary, then they may be decomposed into   elementary paths  $p'_1$ and $p'_2$ from~$q(j)$ to~$t$, and from~$q(t)$ to~$j$ respectively, and the union of not necessarily disjoint cycles.
Let $b'_1$ and $b'_2$ be signed elementary bijections in~$A^{\wedge n-1}_{\{t\}^c,\{q(j)\}^c}$ and~$A^{\wedge n-1}_{\{j\}^c,\{q(t)\}^c}$, corresponding to the
paths $p'_1$ and $p'_2$,  respectively, and $a$ be the product of 
all the signed cycles of $p_1$ and $p_2$.
Then, $b=b'_1b'_2 a$ or $b=\ominus b'_1b'_2 a$.
We conclude as for the previous cases.
\end{proof}

\begin{cor}\label{QEP} 
Let~$A\in\TT^{n\times n}$ be nonsingular, then
$$\det(A) \tr\big((A^\nabla)^{\wedge n-k}\big)\succeq^\circ \tr\big(A^{\wedge k}\big)\enspace .$$
If $\ominus \unit=\unit$, we have
$$\det(A) \big(A^\nabla\big)^{\wedge n-k}_{J^c,I^c}\succeq^\circ A^{\wedge k}_{_{I,J}}\enspace ,$$
in particular  over~$\supertropical$ we have
$$\det(A) \big(A^\nabla\big)^{\wedge n-k}_{J^c,I^c}\models A^{\wedge k}_{_{I,J}}.$$
\end{cor}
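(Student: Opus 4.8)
The plan is to deduce all three assertions from the Tropical Jacobi identity (Theorem~\ref{QDF}), whose inequality $\det(A)\big(DA^\nabla D\big)^{\wedge n-k}_{J^c,I^c}\succeq^\circ A^{\wedge k}_{_{I,J}}$ holds for every $I,J\in\mathcal{P}_k([n])$, and to specialize it to the diagonal case $J=I$ in order to pass to traces. The only thing that needs to be understood is how conjugation by $D$ acts on the diagonal of an $(n-k)$-th compound.

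First I would analyze that action. Since $D$ is an invertible diagonal matrix, the equality case of Cauchy--Binet (Point~\eqref{eqadj-2} of Corollary~\ref{eqadj}, applied twice) gives $\big(DA^\nabla D\big)^{\wedge n-k}=D^{\wedge n-k}(A^\nabla)^{\wedge n-k}D^{\wedge n-k}$. By Point~\eqref{compinv2} of Remark~\ref{compinv}, $D^{\wedge n-k}$ is the diagonal matrix with entry $(\ominus\unit)^{\sum_{i\in K}i}$ in position $(K,K)$. Hence the $(K,K)$ diagonal entry of $\big(DA^\nabla D\big)^{\wedge n-k}$ equals $(\ominus\unit)^{2\sum_{i\in K}i}(A^\nabla)^{\wedge n-k}_{K,K}$. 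Because $\ominus a=(\ominus\unit)a$ and $\ominus(\ominus\unit)=\unit$ together force $(\ominus\unit)^2=\unit$, the sign prefactor collapses to $\unit$, so conjugation by $D$ leaves the diagonal of the $(n-k)$-th compound unchanged: $\big(DA^\nabla D\big)^{\wedge n-k}_{K,K}=(A^\nabla)^{\wedge n-k}_{K,K}$ for every $K\in\mathcal{P}_{n-k}([n])$. In particular the two compounds have the same trace.

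To obtain the trace inequality I would set $J=I$ (so that $J^c=I^c$) in Theorem~\ref{QDF}, giving $\det(A)\big(DA^\nabla D\big)^{\wedge n-k}_{I^c,I^c}\succeq^\circ A^{\wedge k}_{_{I,I}}$ for each $I\in\mathcal{P}_k([n])$, and then sum these inequalities over all such $I$. As $\succeq^\circ$ is a preorder compatible with $\oplus$, it is preserved under the sum, and distributivity handles the factor $\det(A)$. Since $I\mapsto I^c$ is a bijection from $\mathcal{P}_k([n])$ onto $\mathcal{P}_{n-k}([n])$, the left-hand side becomes $\det(A)\tr\big((DA^\nabla D)^{\wedge n-k}\big)=\det(A)\tr\big((A^\nabla)^{\wedge n-k}\big)$ by the diagonal identity just established, and the right-hand side becomes $\tr\big(A^{\wedge k}\big)$. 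This is exactly the first claim.

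For the remaining two statements I would specialize the semiring. When $\ominus\unit=\unit$ one has $D_{i,i}=(\ominus\unit)^i=\unit$, so $D=\mathcal{I}$ and $DA^\nabla D=A^\nabla$; the inequality of Theorem~\ref{QDF} then reads $\det(A)\big(A^\nabla\big)^{\wedge n-k}_{J^c,I^c}\succeq^\circ A^{\wedge k}_{_{I,J}}$ without any conjugation. The supertropical case is an instance of this: over $\supertropical$ the symmetry is the identity, so $\ominus\unit=\unit$, and by convention $\succeq^\circ$ is written $\models$, yielding the last displayed relation. I do not expect a genuine obstacle, as all the substance is already packaged in Theorem~\ref{QDF}; the only points requiring care are the bookkeeping of the prefactor $(\ominus\unit)^{2\sum_{i\in K}i}$ and the verification that the summation over $I$ respects $\succeq^\circ$, both of which are routine.
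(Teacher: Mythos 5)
Your proof is correct and follows essentially the same route as the paper's: both use the equality case of Cauchy--Binet (Corollary~\ref{eqadj}) to write $(DA^\nabla D)^{\wedge n-k}=D^{\wedge n-k}(A^\nabla)^{\wedge n-k}D^{\wedge n-k}$, observe that the diagonal conjugation leaves the diagonal (hence the trace) unchanged, set $J=I$ in Theorem~\ref{QDF} and sum, and handle the $\ominus\unit=\unit$ and supertropical cases by noting $D=\mathcal{I}$. The only difference is that you spell out the sign bookkeeping $(\ominus\unit)^{2\sum_{i\in K}i}=\unit$ where the paper simply says $D^{\wedge n-k}$ is diagonal and equal to its inverse.
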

\begin{proof}
By Corollary~\ref{eqadj}, we have $( DA^\nabla D)^{\wedge n-k}=
D^{\wedge n-k}(A^\nabla )^{\wedge n-k}D^{\wedge n-k}$,
and since $D^{\wedge n-k}$ is diagonal and equal to its inverse,
we get that $\big((DA^\nabla D)^{\wedge n-k}\big)_{I,I}=\big((A^\nabla )^{\wedge n-k}\big)_{I,I}$ for all $I\in \mathcal{P}_{n-k}$.
Using Theorem~\ref{QDF}, we deduce the first assertion of the corollary.
The second one follows from the
property that $DA^\nabla D=A^\nabla$ when  $\ominus \unit=\unit$.
\end{proof}

In Section~\ref{tcp} we shall apply Corollary~\ref{QEP} to relate the  characteristic polynomials of a 
matrix and its quasi-inverse, and deduce an analogue relation between their eigenvalues.

\begin{cor} \label{cor1} 
Let~$A\in\TT^{n\times n}$ be nonsingular and assume that $\TT\ne\TT^\circ$
and that all entries of $\big(A^{\wedge n-1}\big)^{\wedge n-k}$ belong to $\TT^\vee$.
Then,  the inequalities of  Theorem~\ref{QDF} and
Corollary~\ref{QEP} become equalities.
\end{cor}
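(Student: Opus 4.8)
The plan is to upgrade the inequalities $\succeq^\circ$ of Theorem~\ref{QDF} and Corollary~\ref{QEP} to equalities by means of Proposition~\ref{prop2}, which asserts that on a semiring with $\SS^\circ\neq\SS$ satisfying Property~\ref{prp1}, the relation $b\succeq^\circ a$ together with $b\in\SS^\vee$ forces $a=b$. Since $\TT$ satisfies Property~\ref{prp1} by definition and we assume $\TT\neq\TT^\circ$, this proposition applies with $\SS=\TT$. Thus it suffices to show that the left-hand side of the inequality in Theorem~\ref{QDF}, which is the dominating side, lies entrywise in $\TT^\vee$; equality will then follow for each pair $I,J$.

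First I would rewrite the dominating entry. By the first equality in Theorem~\ref{QDF}, that entry equals $\det(A)\big(\det(A)^{-1}A^{\wedge n-1}\big)^{\wedge n-k}_{\pi(I),\pi(J)}$. Since every $(n-k)$-minor of $\det(A)^{-1}A^{\wedge n-1}$ is a product of exactly $n-k$ entries of that matrix, and $|\pi(I)|=|I^c|=n-k$, the scalar $\det(A)^{-1}$ factors out as the power $\det(A)^{-(n-k)}$, giving
\[
\det(A)\big(\det(A)^{-1}A^{\wedge n-1}\big)^{\wedge n-k}_{\pi(I),\pi(J)}
=\det(A)^{\,k+1-n}\big(A^{\wedge n-1}\big)^{\wedge n-k}_{\pi(I),\pi(J)}\enspace .
\]
As $A$ is nonsingular, $\det(A)\in\TT^*$, hence $\det(A)^{\,k+1-n}\in\TT^*$; and by hypothesis $\big(A^{\wedge n-1}\big)^{\wedge n-k}_{\pi(I),\pi(J)}\in\TT^\vee$. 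Because $\TT^\vee=\TT^*\cup\{\zero\}$ is stable under multiplication by an invertible scalar, the product lies in $\TT^\vee$. Applying Proposition~\ref{prop2} entrywise then yields equality in Theorem~\ref{QDF} for all $I,J\in\mathcal{P}_k([n])$.

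Finally I would transfer these equalities to Corollary~\ref{QEP}. For the first assertion, I restrict to $I=J$ and sum over $I$: using the identity $\big((DA^\nabla D)^{\wedge n-k}\big)_{I^c,I^c}=\big((A^\nabla)^{\wedge n-k}\big)_{I^c,I^c}$ recorded in the proof of Corollary~\ref{QEP}, the sum $\bigoplus_I\det(A)\big(DA^\nabla D\big)^{\wedge n-k}_{I^c,I^c}$ becomes $\det(A)\tr\big((A^\nabla)^{\wedge n-k}\big)$ as $I^c$ ranges over $\mathcal{P}_{n-k}([n])$, while the right-hand sides sum to $\tr\big(A^{\wedge k}\big)$; equality of the summands gives equality of the traces. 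For the case $\ominus\unit=\unit$ one has $D=\mathcal{I}$, so the inequality of Corollary~\ref{QEP} coincides literally with that of Theorem~\ref{QDF} and is therefore an equality; the supertropical statement with $\models$ is the special instance $\TT=\supertropical$, which indeed satisfies $\TT\neq\TT^\circ$.

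The main obstacle is the bookkeeping of the second step: one must correctly track the power of $\det(A)$ produced by forming the $(n-k)$-th compound of $\det(A)^{-1}A^{\wedge n-1}$, and invoke the stability of $\TT^\vee$ under multiplication by $\TT^*$, in order to certify that the dominating entry genuinely belongs to $\TT^\vee$ and thus that Proposition~\ref{prop2} is applicable.
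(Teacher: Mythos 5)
Your proof is correct and follows the same route as the paper, which simply invokes Proposition~\ref{prop2}; you have filled in the one nontrivial detail, namely that the dominating side equals $\det(A)^{k+1-n}\big(A^{\wedge n-1}\big)^{\wedge n-k}_{\pi(I),\pi(J)}$ and hence lies in $\TT^\vee$ as an invertible multiple of an entry assumed to be in $\TT^\vee$. The transfer to Corollary~\ref{QEP} by summing the diagonal equalities is also exactly what is needed.
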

\begin{proof}
This follows from Proposition~\ref{prop2}. 
\end{proof}

\begin{cor}  \label{cor2}
Let~$A\in\TT^{n\times n}$ be nonsingular and assume that $A^*$ exists and
$A=A^*$. Then, $|A^{\wedge n-k}|_{J^c,I^c}= |A^{\wedge k}|_{I,J}$,
for all $k\in \{0,\ldots, n\}$, and $I,J\in \mathcal{P}_k([n])$. \end{cor}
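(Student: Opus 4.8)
The plan is to reduce the statement, which only involves moduli, to an exact identity over the totally ordered idempotent semifield $\Mod$. Since the modulus $\mu\colon\TT\to\Mod$ is a morphism of semirings and each entry $A^{\wedge m}_{I,J}$ is a sum of signed products of entries of $A$, the signs disappear under $\mu$ and we get $|A^{\wedge m}_{I,J}|=(M^{\wedge m})_{I,J}$, where $M:=|A|\in\Mod^{n\times n}$. Hence the corollary is equivalent to the exact equality $(M^{\wedge n-k})_{J^c,I^c}=(M^{\wedge k})_{I,J}$ over $\Mod$, and the whole task becomes one over an idempotent semifield, where $\ominus$ coincides with $\oplus$ and $\det=\per$.

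Next I would establish that $M$ is definite over $\Mod$. Because $\TT$ is naturally ordered and $A=A^*\succeq A^0=\mathcal{I}$, we have $A\succeq\mathcal{I}$, hence $A^2\succeq A$; on the other hand $A^2$ is a summand of the convergent sum $A^*=A$, so $A\succeq A^2$, and therefore $A^2=A$. Applying the modulus gives $M^2=|A^2|=|A|^2=M$ and $M\succeq\mathcal{I}$. Over the semifield $\Mod$, comparing the $(i,i)$ entries in $M^2=M$ with $M_{ii}\succeq\unit$ forces $M_{ii}^2=M_{ii}$, whence $M_{ii}=\unit$; moreover $M^\ell=M$ for all $\ell\geq 1$ shows that every cycle of $M$ has weight $\preccurlyeq\unit$, so each permutation contributes at most $\unit$ to $\per(M)$. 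Thus $\det(M)=\per(M)=\unit$ and $M$ is definite (in particular $M=M^*$).

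Finally I would invoke the Jacobi inequality. Since $M$ is definite over $\Mod$ and $M=M^*$, Theorem~\ref{DF} (applied over $\Mod$, writing $M=\mathcal{I}\ominus B$ with $B_{i,i}=\zero$) yields $M^\nabla=M^*=M$. As $\ominus\unit=\unit$ and $\det(M)=\unit$, Corollary~\ref{QEP} gives $(M^{\wedge n-k})_{J^c,I^c}=\det(M)\big(M^\nabla\big)^{\wedge n-k}_{J^c,I^c}\succeq(M^{\wedge k})_{I,J}$ for all $k$ and all $I,J\in\mathcal{P}_k([n])$. Applying this same inequality with $(k,I,J)$ replaced by $(n-k,J^c,I^c)$ produces the reverse inequality $(M^{\wedge k})_{I,J}\succeq(M^{\wedge n-k})_{J^c,I^c}$, and antisymmetry of the natural order on $\Mod$ then gives the claimed equality.

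The main obstacle is the definiteness step: Theorem~\ref{DF} presupposes definiteness, so one cannot read off $M=M^*$ and $\det(M)=\unit$ directly from it. I resolve this circularity by first deriving the idempotency $A^2=A$ purely from $A=A^*$ and the natural order of $\TT$, transporting it to $M^2=M$ through the modulus, and only then deducing definiteness of $M$ from $M^2=M$ and $M\succeq\mathcal{I}$ by the elementary cycle and diagonal computation over the semifield $\Mod$; once definiteness is in hand, the two applications of Corollary~\ref{QEP} close the argument.
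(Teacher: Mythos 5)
Your proof is correct and follows essentially the same route as the paper: establish that the matrix is definite, use Theorem~\ref{DF} to identify the modulus of the quasi-inverse with the modulus of the matrix itself, apply the Jacobi inequality at both $k$ and $n-k$, and conclude by antisymmetry of the natural order. The only differences are cosmetic — you transport everything to $\Mod$ at the outset and apply Corollary~\ref{QEP} there, whereas the paper applies Theorem~\ref{QDF} over $\TT$ and takes moduli afterwards — and you usefully spell out the definiteness step ($A=A^*\Rightarrow A^2=A\Rightarrow|A|$ definite) that the paper asserts without proof.
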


\begin{proof}
Since $A$ is nonsingular and $A=A^*$, we deduce that~$A$ is definite.
From Theorem~\ref{DF}, we have  $|A^\nabla|=|A^*|$ so $|A^\nabla|=|A|$.
Since the modulus is a morphism, we get, for $k\in [n]$,
\[|A^{\wedge n-k}|=  \big(|A |\big)^{\wedge n-k}= \big(|A^\nabla |\big)^{\wedge n-k}=\big(|DA^\nabla D|\big)^{\wedge n-k}=|\big(DA^\nabla D\big)^{\wedge n-k}|\enspace , \]
where all operations on moduli of matrices are done with
respect to the semiring $\Mod$.
Hence, applying the modulus to the inequality of
Theorem~\ref{QDF}, we get $ |A^{\wedge n-k}_{J^c,I^c}|\succeq |A^{\wedge k}_{_{I,J}}|$,
for all  $I,J\in \mathcal{P}_k([n])$.
Applying the same inequality to $n-k$ instead of $k$, we get 
 $ |A^{\wedge k}_{I,J}|\succeq |A^{\wedge n-k}_{J^c,I^c}|$,
thus the equality.
\end{proof}


\section{Other identities on compound matrices}\label{oid}


\subsection{The quasi-inverse matrix}\label{epqi}

We define  the  relation $\succmod$ by:  $a\succmod b\ \Leftrightarrow a\succeq^\circ b\text{ and~}|a|=|b|$. 
With the same arguments as for Theorem~\ref{QDF}, we obtain the following
identities.

\begin{pro}\label{defbar} Let $A\in \TT^{n\times n}$. Recall that $\II_A=A A^\nabla$.

\begin{enumerate}
\item\label{cor341} If~$A$ is definite, then
${A}^\nabla_{i,i}=(\II_A)_{i,i}=
\unit,\ \forall i,$ and 
$$\det(\II_A)\succmod\det(A^\nabla)=\det(A^{\wedge n-1})\succmod\unit\enspace.$$

\item\label{cor342} If $A$ is  nonsingular, then 
\[ 
\det(B)\succmod
\begin{cases}
\det(A)^{ n-1}&,\ B=\adj(A)\\
\det(A)^{ n} &,\ B=A \adj(A)\\
\det(A)^{ -1}&,\ B=A^\nabla\enspace. \end{cases}\]

\end{enumerate}
\end{pro}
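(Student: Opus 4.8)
The plan is to prove the definite case~\eqref{cor341} first and then bootstrap the nonsingular case~\eqref{cor342} from it by passing to a definite form, using that all three matrices in~\eqref{cor342} differ from $\adj(A)$ or from $\II_A$ only by an invertible scalar factor. Throughout I would rely on the fact that $\succmod$ is transitive and compatible with multiplication by an invertible scalar $\lambda$ (if $a=b\oplus c^\circ$ then $\lambda a=\lambda b\oplus(\lambda c)^\circ$, and $|\lambda a|=|\lambda|\,|a|$), so the target identities can be manipulated additively and multiplicatively, and that the modulus is monotone for $\succeq^\circ$.

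For~\eqref{cor341}, assume $A$ definite, so $\det(A)=\unit$ and $A^\nabla=\adj(A)$. The diagonal statement splits in two: $(\II_A)_{i,i}=\unit$ is exactly Proposition~\ref{qasinv}, while $A^\nabla_{i,i}=\adj(A)_{i,i}=(\ominus\unit)^{2i}A^{\wedge n-1}_{\{i\}^c,\{i\}^c}=A^{\wedge n-1}_{\{i\}^c,\{i\}^c}=\unit$ by the identity of Remark~\ref{compinv} together with Lemma~\ref{n-1def}. Next I would establish the middle equality $\det(A^\nabla)=\det(A^{\wedge n-1})$: since $A^{\wedge n-1}=Q^{-1}(A^\nabla)^\T Q$ with $Q$ an invertible monomial matrix (Remark~\ref{compinv}), two applications of the equality case of Theorem~\ref{detAB} give $\det(A^{\wedge n-1})=\det(Q^{-1})\det((A^\nabla)^\T)\det(Q)=\det(A^\nabla)$, using $\det(M^\T)=\det(M)$ and $\det(Q^{-1})\det(Q)=\unit$. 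It then remains to show $\det(A^{\wedge n-1})\succmod\unit$ and $\det(\II_A)\succmod\det(A^\nabla)$. For the modulus I would note $|\det(A^\nabla)|\succcurlyeq\unit$ from the $\unit$-diagonal, and $|\det(A^\nabla)|\preccurlyeq\unit$ by applying the modulus to $\det(\II_A)\succeq^\circ\det(A)\det(A^\nabla)$ (Theorem~\ref{detAB}) and invoking $|\det(\II_A)|=\unit$ (Proposition~\ref{qasinv}); hence $|\det(A^{\wedge n-1})|=|\det(A^\nabla)|=\unit$. Since the identity permutation already contributes $\unit$ to $\det(A^{\wedge n-1})$ (Lemma~\ref{n-1def}), we have $\det(A^{\wedge n-1})\succcurlyeq\unit$, and Property~\ref{prp1} (with $\unit\in\TT^\vee$) upgrades this to $\det(A^{\wedge n-1})=\unit$; in particular $\det(A^\nabla)=\unit$ as well, so the chain of~\eqref{cor341} follows, the relation $\det(\II_A)\succeq^\circ\unit$ being again Theorem~\ref{detAB}.

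For~\eqref{cor342}, write $A=P\bar A$ with $\bar A$ a definite form and $P$ invertible, $\det(P)=\det(A)$. The key reduction is $\det(\adj(A))\succmod\det(A)^{n-1}$: the equality case of Theorem~\ref{ADJ} (Corollary~\ref{eqadj}) gives $\adj(A)=\adj(\bar A)\adj(P)$, while $\adj(P)=\det(A)P^{-1}$ is invertible, so the equality case of Theorem~\ref{detAB} yields $\det(\adj(A))=\det(\adj(\bar A))\,\det(A)^{n}\det(P)^{-1}=\det(\adj(\bar A))\,\det(A)^{n-1}$; by~\eqref{cor341} applied to $\bar A$ one has $\det(\adj(\bar A))\succmod\unit$, and multiplying by the invertible scalar $\det(A)^{n-1}$ gives the claim. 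The remaining two cases then follow algebraically: $A^\nabla=\det(A)^{-1}\adj(A)$ gives $\det(A^\nabla)=\det(A)^{-n}\det(\adj(A))\succmod\det(A)^{-1}$, and $A\adj(A)=\det(A)\,\II_A$ gives $\det(A\adj(A))=\det(A)^{n}\det(\II_A)\succmod\det(A)^{n}$, where $\det(\II_A)\succmod\unit$ is proved for any nonsingular $A$ exactly as in~\eqref{cor341} (from $\det(\II_A)\succeq^\circ\det(A)\det(A^\nabla)$ with $|\det(\II_A)|=\unit$, or directly from $\II_A\succeq^\circ\II$ entrywise with $\unit$-diagonal, which forces $\det(\II_A)=\unit\oplus d^\circ$).

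The main obstacle I anticipate is pinning down the modulus of $\det(A^{\wedge n-1})$ (equivalently of $\det(A^\nabla)$) in the definite case. The $\unit$-diagonal supplies only the lower bounds $|\det(A^\nabla)|\succcurlyeq\unit$ and $\det(A^{\wedge n-1})\succcurlyeq\unit$; the matching upper bound is not combinatorially obvious, since $A^{\wedge n-1}$ need not be definite, and I expect to obtain it only indirectly, from the super-multiplicativity of the determinant (Theorem~\ref{detAB}) together with the normalization $|\det(\II_A)|=\unit$ of Proposition~\ref{qasinv}. Once both bounds are in hand, Property~\ref{prp1} closes the gap, and the rest is routine bookkeeping with the definite form and with the scalar-compatibility of $\succmod$.
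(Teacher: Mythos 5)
Your overall architecture matches the paper's (prove the definite case first, then transfer to a general nonsingular $A$ via a definite form and the invertible-factor equality cases of Theorems~\ref{detAB} and~\ref{ADJ}), and your indirect derivation of the modulus statement $|\det(A^{\wedge n-1})|=|\det(A^\nabla)|=\unit$ --- squeezing it between the lower bound from the $\unit$-diagonal and the upper bound obtained by applying the modulus to $\det(\II_A)\succeq^\circ\det(A)\det(A^\nabla)$ together with $|\det(\II_A)|=\unit$ --- is correct and cleaner than the paper's combinatorial term-by-term analysis. But there is a genuine gap at exactly the point you flagged as the main obstacle. The relation $\det(A^{\wedge n-1})\succmod\unit$ requires $\det(A^{\wedge n-1})\succeq^\circ\unit$, i.e.\ $\det(A^{\wedge n-1})=\unit\oplus c^\circ$ with $c^\circ\in\TT^\circ$. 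What you actually establish is only $\det(A^{\wedge n-1})\succcurlyeq\unit$ (the weaker preorder $\succcurlyeq$, coming from the identity permutation), and your attempt to close the gap by invoking Property~\ref{prp1} is a misapplication: that property lets you conclude $a=b$ from $b\succeq a$, $|a|=|b|$ only when the \emph{larger} element $b$ is known to lie in $\TT^\vee$, and here the larger element is $\det(A^{\wedge n-1})$, whose membership in $\TT^\vee$ is precisely what is unknown. Indeed your conclusion $\det(A^{\wedge n-1})=\unit$ is false in general: the example following Corollary~\ref{TSF-cor} gives a definite $3\times 3$ matrix over $\smax$ with $\det(A^{\wedge 2})=\unit^\circ\neq\unit$.

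The repair is short: for definite $A$, the relation $\det(A^{\wedge n-1})\succeq^\circ\unit$ is exactly Theorem~\ref{QDF} applied with $k=0$ (this is what the paper uses); alternatively one can argue, as the paper does, that every term of $\det(A^{\wedge n-1})$ is either $\ominus\unit$ or $\curlyeqprec\unit$, so the sum equals $\unit$ or $\unit^\circ$. Note that an analogue of your ``direct'' argument for $\det(\II_A)\succeq^\circ\unit$ (identity term $\unit$, every other term containing an off-diagonal factor in $\TT^\circ$) does not transfer to $A^{\wedge n-1}$, since its off-diagonal entries need not be singular. Once $\det(A^{\wedge n-1})\succeq^\circ\unit$ is supplied by Theorem~\ref{QDF}, your modulus computation completes Point~\eqref{cor341}, and your reduction for Point~\eqref{cor342} (which is essentially the paper's) goes through unchanged.
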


\begin{proof}\eqref{cor341} Let $A$ be definite.
From Lemma~\ref{n-1def}, we get~$A^\nabla_{i,i}=A^{\wedge n-1}_{\{i\}^c,\{i\}^c}=\unit $, and from Proposition~\ref{qasinv}, we have  $(\II_A)_{i,i}=\unit$,
for every~$i\in[n]$.
From Remark~\ref{compinv},  $\det(A^\nabla)=
\det(Q(A^{\wedge n-1})^\T Q^{-1})=\det(A^{\wedge n-1})$.

Applying Theorem~\ref{QDF} for~$k=0$, we get~$\det(A^{\wedge n-1})\succeq^\circ\unit$. 
Now by the same arguments as in the proof of Theorem~\ref{QDF},
$\det(A^{\wedge n-1})$ is the sum of signed 
products  of  signed elementary bijections,
each of them being equal to the product of (not necessarily disjoint)
signed cycles possibly times $\ominus \unit$.
Then, each term is equal to $\ominus \unit$ or $\curlyeqprec \unit$. 
Since the term corresponding to the identity is equal to $\unit$,
we obtain that $\det(A^{\wedge n-1})$ equals $\unit$ or $\unit^\circ$, which implies
$|\det(A^{\wedge n-1})|=\unit$ and so $\det(A^{\wedge n-1})\succmod \unit$.

Straightforward,~$\det(AA^\nabla)\succeq^\circ\det(A)\det(A^\nabla)=\det(A^\nabla)
= \det(A^{\wedge n-1})\succmod\unit$. 
As above, one can write $\det(AA^\nabla)$ as the sum of signed 
products  of  signed elementary bijections,
each of them being equal to the product of (not necessarily disjoint)
signed cycles possibly times $\ominus \unit$.
So we obtain again that $\det(AA^\nabla)$  equals $\unit$ or $\unit^\circ$,
which implies $\det(AA^\nabla)\succmod\det(A^\nabla)$.

\eqref{cor342} Denote the right normalization of~$A$ by~$A=\bar{A} P$. From Corollary~\ref{eqadj},
\begin{eqnarray*}
\det(\adj(A))&=&\det(\adj(P)\adj(\bar{A}))=\det(\adj(P))\det(\adj(\bar{A}))\\
&=& (\det(P))^n\det( P^{ -1}){\det(\adj(\bar{A}))}\succmod\det(A)^{ n-1},\\
\det(A\adj(A))&=&\det(\det(P)\bar{A} P P^{ -1}\adj(\bar{A}))\\
&=&\det(A)^n{\det(\II_{\bar{A}})}\succmod\det(A)^{ n},\\
\det(A^\nabla)&=&\det(A)^{-n}\det(\adj(A))\succmod\det(A)^{ -1}\enspace.
\qed
\end{eqnarray*}
\renewcommand{\qedsymbol}{}
\end{proof}

Recalling Proposition~\ref{qasinv} and Theorem~\ref{detAB}, the matrices~
$\II_A$ and ${A}^\nabla$ in~\eqref{cor341} are
definite over~$\supertropical$, as proved in~\cite[Remark~2.18]{PI&CP}, and
the~$\succmod$ relations in~\eqref{cor341} and~\eqref{cor342} become equalities over~$\supertropical$, as proved 
in~\cite[Theorem 4.9]{STMA}.
Note that Point~\eqref{cor342} of Proposition~\ref{defbar}
is an  equality when $\TT=\Mod$ and in particular when $\TT=\rmax$
and $\det(A)\in \TT^*$. This implies that
$|\det(A^\nabla)|=|\det(A)|^{-1}$ holds in $\TT=\supertropical$ 
for any matrix $A$, either singular or not, such that $\det(A)\neq \zero$.

We provide one more identity concerning the quasi-inverse, to be used in Section~\ref{epcm}.
\begin{pro}\label{nabcom}
If~$A$ is nonsingular, then
$A^{\wedge k}(A^\nabla)^{\wedge k}\succeq^\circ \mathcal{I}.$\end{pro}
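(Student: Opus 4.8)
The plan is to mirror the strategy of Theorem~\ref{QDF}: first reduce to the case where $A$ is definite, and then run a sign-cancellation argument. For the reduction, write a left definite form $A=P\bar{A}$, where $P$ is an invertible (monomial) matrix with $\det(P)=\det(A)$ and $\bar{A}$ is definite, so that $\det(\bar{A})=\unit$. Since $P$ is invertible, Corollary~\ref{eqadj} gives equality in Cauchy--Binet and in the adjoint identity, so that $A^{\wedge k}=P^{\wedge k}\bar{A}^{\wedge k}$, $\adj(A)=\adj(\bar{A})\adj(P)$, and, using $\det(A)=\det(P)$ together with $P^{-1}=\adj(P)\det(P)^{-1}$,
$$A^\nabla=\adj(A)\det(A)^{-1}=\adj(\bar{A})\,P^{-1}=\bar{A}^\nabla P^{-1}.$$
Applying Corollary~\ref{eqadj} once more to the invertible factor $P^{-1}$ yields $(A^\nabla)^{\wedge k}=(\bar{A}^\nabla)^{\wedge k}(P^{-1})^{\wedge k}$ and $(P^{-1})^{\wedge k}=(P^{\wedge k})^{-1}$, whence
$$A^{\wedge k}(A^\nabla)^{\wedge k}=P^{\wedge k}\big(\bar{A}^{\wedge k}(\bar{A}^\nabla)^{\wedge k}\big)(P^{\wedge k})^{-1}.$$
Now $P^{\wedge k}$ is again a monomial matrix (Remark~\ref{compinv}), so each entry of $P^{\wedge k}X(P^{\wedge k})^{-1}$ is a single entry of $X$ multiplied on both sides by invertible scalars; as $u\,c^\circ=(uc)^\circ$ for invertible $u$, the relation $X\succeq^\circ\mathcal{I}$ is transported to $P^{\wedge k}X(P^{\wedge k})^{-1}\succeq^\circ P^{\wedge k}\mathcal{I}(P^{\wedge k})^{-1}=\mathcal{I}$. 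It therefore suffices to prove the statement for a definite matrix $\bar{A}$.

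Assuming $\bar{A}$ definite, I would expand
$$\big(\bar{A}^{\wedge k}(\bar{A}^\nabla)^{\wedge k}\big)_{I,J}=\bigoplus_{K\in\mathcal{P}_k([n])}\bar{A}^{\wedge k}_{I,K}\,(\bar{A}^\nabla)^{\wedge k}_{K,J}.$$
As in the proof of Theorem~\ref{QDF}, Remark~\ref{compinv} together with Point~\eqref{dfs3} of Lemma~\ref{dfs} shows that each entry $\bar{A}^\nabla_{i,j}=\adj(\bar{A})_{i,j}$ is a sum of signed elementary bijections whose single elementary path runs from $i$ to $j$, and equals $\unit$ when $i=j$. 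Hence every entry of the product above is a sum whose terms are each a \summand, the distinguished one (taking $K=I=J$ and all bijections trivial) being equal to $\unit$; this is the term that will produce the diagonal of $\mathcal{I}$.

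The decisive step is the cancellation analysis, which I would carry out with the same mechanism as in Theorem~\ref{QDF}. On the diagonal $I=J$, the identity term contributes $\unit$, and I would show that every other \summand either is $\curlyeqprec\unit$ — this is where definiteness enters, via Lemma~\ref{dfs}, forcing the weight of every cycle and every signed permutation to be $\curlyeqprec\unit$ — or reappears with the opposite sign and hence contributes an element of $\SS^\circ$; by Property~\ref{propnew} and Lemma~\ref{lem-propnew} multiplicities are irrelevant, so the diagonal entry is $\succeq^\circ\unit$. Off the diagonal, $I\neq J$, no \summand can reduce to the identity, and I would show that all of them pair off with opposite signs, so that the whole entry lies in $\SS^\circ$, i.e.\ is $\succeq^\circ\zero$. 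The pairing is produced, exactly as in the three cases treated in Theorem~\ref{QDF}, by transposing the endpoints of two paths that share a repeated node (the swap $\rho\circ(i\ j)$), with Corollary~\ref{eps} tracking the sign under concatenation and splitting of elementary paths and Corollary~\ref{cormoins} absorbing the stray $\ominus\unit$ factors. The main obstacle is precisely this bookkeeping of the sign-reversing involution on the non-admissible configurations; once it is in place, the two cases combine to give $\bar{A}^{\wedge k}(\bar{A}^\nabla)^{\wedge k}\succeq^\circ\mathcal{I}$, and the reduction above finishes the proof.
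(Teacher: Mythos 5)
Your reduction to the definite case is sound and is essentially the paper's (the paper uses a right definite form $A=\bar A P$, which makes $P^{\wedge k}$ and $(P^{\wedge k})^{-1}$ cancel in the middle so that $A^{\wedge k}(A^\nabla)^{\wedge k}=\bar A^{\wedge k}(\bar A^\nabla)^{\wedge k}$ with no conjugation left over; your left form also works, at the cost of the extra monomial-conjugation argument). The gap is in the definite case. You propose to expand each entry of $(\bar A^\nabla)^{\wedge k}$ into signed elementary bijections and then run ``the same'' sign-reversing involution as in Theorem~\ref{QDF}, but you never construct it, and it is not the same: a summand of $\big(\bar A^{\wedge k}(\bar A^\nabla)^{\wedge k}\big)_{I,J}$ is the product of one \emph{plain} bijection $\sigma\in\allperm_{I,K}$ of $\bar A$ with $k$ signed elementary bijections, whereas the objects analyzed in Theorem~\ref{QDF} are products of signed elementary bijections only. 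The three failure modes and their swaps therefore do not transfer verbatim; the admissible configurations, the pairing, and the treatment of the diagonal (where the paper obtains exactly $\det(\bar A)=\unit$, not ``identity term plus dominated or balanced terms'') would all have to be re-derived. Since you yourself flag this bookkeeping as ``the main obstacle'', the heart of the proof is missing.

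The paper closes exactly this gap by using Theorem~\ref{QDF} as a black box instead of re-running its proof: for $\bar A$ definite, Jacobi's inequality with $k$ replaced by $n-k$ gives $(D\bar A^\nabla D)^{\wedge k}_{L,J}\succeq^\circ \bar A^{\wedge n-k}_{J^c,L^c}$, so, after inserting $D^{\wedge k}=(D^{\wedge k})^{-1}$,
$$\big(\bar A^{\wedge k}(\bar A^\nabla)^{\wedge k}\big)_{I,J}\succeq^\circ \bigoplus_{L\in\mathcal{P}_k([n])} \bar A^{\wedge k}_{I,L}\, D^{\wedge k}_{L,L}\,\bar A^{\wedge n-k}_{J^c,L^c}\,D^{\wedge k}_{J,J}\enspace .$$
The right-hand side involves only plain bijections $\sigma\in\allperm_{I,L}$ and $\tau\in\allperm_{J^c,L^c}$: when $I=J$ they glue into permutations of $[n]$ and the sum is exactly $\det(\bar A)=\unit$; when $I\ne J$, swapping the images of any $m\in I\cap J^c$ under $\sigma$ and $\tau$ is a sign-reversing involution, so the sum lies in $\TT^\circ$. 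I recommend you replace your cancellation analysis with this short argument; it both fills the hole and avoids re-proving Theorem~\ref{QDF} in a setting where its case analysis does not literally apply.
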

(Not to be confused with~$A^{\wedge k}(A^{\wedge k})^\nabla$ and~$(AA^\nabla)^{\wedge k}$ 
which are obviously~$\succeq^\circ\mathcal{I}$.)
\begin{proof}Let~$\bar{A}P$ be a right normalization of~$A$. 
By Corollary~\ref{eqadj}, we have 
$$A^{\wedge k}(A^\nabla)^{\wedge k}=\bar{A}^{\wedge k}P^{\wedge k}(P^{-1})^{\wedge k}
(\bar{A}^\nabla)^{\wedge k}=\bar{A}^{\wedge k}P^{\wedge k}(P^{\wedge k})^{-1}
(\bar{A}^\nabla)^{\wedge k}=\bar{A}^{\wedge k}(\bar{A}^\nabla)^{\wedge k},$$ 
then it is sufficient to 
prove the identity for~$A$ definite.
Using Theorem~\ref{QDF} and the fact that $D^{\wedge k}$ is a diagonal matrix
equal to its inverse,
we get
\begin{eqnarray}
\nonumber \big(A^{\wedge k}(A^\nabla)^{\wedge k}\big)_{I,J}
&=& \bigoplus_{L\in\mathcal{P}_k([n])} A^{\wedge k}_{I,L}(A^\nabla)^{\wedge k}_{L,J}\\
\nonumber &=& \bigoplus_{L\in\mathcal{P}_k([n])} A^{\wedge k}_{I,L}
D^{\wedge k}_{L,L} (DA^\nabla D)^{\wedge k}_{L,J}D^{\wedge k}_{J,J} \\
&\succeq^\circ&
 \bigoplus_{L\in\mathcal{P}_k([n])} A^{\wedge k}_{I,L}
D^{\wedge k}_{L,L} A^{\wedge n-k}_{J^c,L^c} D^{\wedge k}_{J,J} \enspace . \label{knabla0}
\end{eqnarray}
The right hand side of~\eqref{knabla0} is the sum of all
expressions of the form
\begin{equation}\label{knabla}\sign(\sigma)\bigodot_{i\in I}a_{i,\sigma(i)}(\ominus\unit)^{\sum_{i\in L}i+\sum_{j\in J}j}
\sign(\tau)\bigodot_{j\in J^c}a_{j,\tau(j)} \enspace ,\end{equation} 
where $\sigma\in\allperm_{I,L}$ , $\tau\in\allperm_{J^c,L^c}$.

If~$I\cap J^c=\emptyset$, then~$I= J$ and one can extend the bijection~$\sigma$ into a 
permutation~$\rho\in\allperm_{[n]}$ such that~$\rho|_{I}=\sigma$ and~$\rho|_{J^c}=\tau$. 
Using Proposition~\ref{resign}, we obtain that~\eqref{knabla} is equal to~$\sign(\rho)
\bigodot_{i\in[n]}a_{i,\rho(i)}$,  so the right hand side of~\eqref{knabla0}
is equal to~$\det(A)=\unit$. 

If~$I\cap J^c\ne\emptyset$, denote
 $I=\{i_1<\dots< i_{k}\}$ and $J=\{j_1<\dots <j_{k}\}$
and define $\delta\in \allperm_{J, I}$ by $\delta(j_t)=i_t$,
and let $\pi=\sigma\circ\delta\in\allperm_{J, L}$.
Then, $\sign(\delta)=1$ and, by Property~\ref{signs},
$\sign(\pi)=\sign(\sigma)$.
Let $\rho\in\allperm_{[n]}$ be such that~$\rho|_{J}=\pi$ and~$\rho|_{J^c}=\tau$. 
From Proposition~\ref{resign} again, we have that
\begin{equation}\label{knabla1}
 \sign(\rho)=
\sign(\sigma)\sign(\tau)(\ominus\unit)^{\sum_{j\in J}j+\sum_{i\in L}i}\enspace.\end{equation} 
Let $m\in I\cap J^c$.  Define $L'=\{\tau(m)\}\cup L\setminus\{\sigma(m)\}\in\mathcal{P}_k([n])$ 
and~$\sigma'\in\allperm_{I,L'},\tau'\in\allperm_{J^c,L'^c}$ by
$\sigma'|_{\{m\}^c}=\sigma|_{\{m\}^c},\ 
\sigma'(m)=\tau(m),\ \tau'|_{\{m\}^c}=\tau|_{\{m\}^c},\ \tau'(m)=\sigma(m).$
Next, consider $\rho'\in\allperm_{[n]}$ such that~$\rho'|_{J}=\sigma'\circ\delta$ and~$\rho'|_{J^c}=\tau'$. 
Since $\rho'=\big(\sigma(m)\ \tau(m)\big)\circ \rho$, we have that 
$$\ominus\sign(\rho)=\sign(\rho')=\sign(\sigma')\sign(\tau')(\ominus\unit)^{\sum_{j\in J}j+\sum_{i\in L'}i}.$$
Moreover, 
$$\bigodot_{i\in I}a_{i,\sigma(i)}\bigodot_{j\in J^c}a_{j,\tau(j)} =
\bigodot_{i\in I}a_{i,\sigma'(i)}\bigodot_{j\in J^c}a_{j,\tau'(j)}
\enspace .$$
 Therefore, 
\eqref{knabla} reappears in the right hand side of~\eqref{knabla0}, with~$\sigma,\tau$ and~$L$ replaced by~$\sigma',\tau'$ and~$L'$  respectively,
and with an opposite sign.
\end{proof}

\subsection{Powers of matrices}\label{eppm}
Let~$M$ be the weight matrix of a weighted directed graph~$G$. Considering a permutation of~${[n]}$ 
in~$G$, we analyze the corresponding permutation in the graph having~$M^{ m}$ as its weight matrix.  As one 
can see in Theorem~\ref{DF}, the elementary paths in graphs having powers of matrices as  a weight matrix, 
satisfy rather unusual characteristics. In this section we provide two  additional properties. The first is an analogue 
to a classical property, and the second  holds over~$\supertropical$, but neither classically,  over~$\smax$ or 
over~$\mathbb{R}_{\max}$, as shown in the following example. Nevertheless,  the second property leads to 
an analogue of a classical result, stated in Corollary~\ref{CPNS}.

\begin{exa}
 $$\text{If }\ A=\left(\begin{array}{cc}a&b\\c&d\end{array}\right),\ \text{ then }\ A^2=
\left(\begin{array}{cc}a^2\oplus bc&b(a\oplus d)\\c(a\oplus d)&d^2\oplus bc\end{array}\right),$$ and we get
\begin{equation}\label{exapowers}\tr\big(A^2\big)=\begin{rcases}\begin{cases}a^2\oplus d^2\oplus bc\oplus bc\ 
=\ a^2\oplus d^2&\text{over a field of characteristic }2\\a^2\oplus d^2\oplus bc\ \ \succeq\  a^2\oplus d^2&
\text{over }\smax\text{ or }\mathbb{R}_{\max}\\a^2\oplus d^2\oplus bc^\nu\ \models\ a^2
\oplus d^2&\text{over }\supertropical\end{cases}\end{rcases}=\tr(A)^2.\end{equation}

\end{exa}

 The first identity is the elementary consequence~$\tr(A^p)=\tr(A)^p$ of  Frobenius  property over a field 
of characteristic~$p$.
 In the following  theorem and corollary, we extend an identity proved in~\cite[Theorem~3.6]{PCP}, and provide 
the supertropical property motivated by the third identity in~\eqref{exapowers}.

\begin{thm}\label{PEP} Let $A\in\TT^{n\times n}$,        $m\in \mathbb{N}$ and $k\in\{0,\ldots, n\}$.
\begin{enumerate}
\item\label{thm1} We have $(A^{ m})^{\wedge k}\succeq^\circ \big(A^{\wedge k}\big)^{ m}.$

\item\label{thm2} If $\TT=\Mod$ or $\TT=\supertropical$, 
we have $\tr\big((A^{\wedge k})^{ m}\big)\succeq^\circ \big(\tr(A^{\wedge k})\big)^{ m}.$
\end{enumerate}\end{thm}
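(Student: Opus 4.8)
The plan is to prove the two assertions by independent means: Point~\eqref{thm1} follows by induction from the tropical Cauchy--Binet inequality, while Point~\eqref{thm2} rests on a combinatorial analysis of closed walks together with the Frobenius property.

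For Point~\eqref{thm1} I would induct on $m$. The cases $m=0$ and $m=1$ are equalities, since $\mathcal{I}^{\wedge k}=\mathcal{I}$ and $(A^{\wedge k})^0=\mathcal{I}$, so $\succeq^\circ$ holds by reflexivity. For the inductive step, I write $A^m=A\,A^{m-1}$ and apply Proposition~\ref{CBF} to obtain $(A^m)^{\wedge k}\succeq^\circ A^{\wedge k}\,(A^{m-1})^{\wedge k}$. Because $\succeq^\circ$ is a preorder compatible with the laws of $\SS$, and therefore with matrix multiplication applied entrywise, left multiplication by the fixed matrix $A^{\wedge k}$ preserves it; combined with the induction hypothesis $(A^{m-1})^{\wedge k}\succeq^\circ (A^{\wedge k})^{m-1}$ this gives $A^{\wedge k}(A^{m-1})^{\wedge k}\succeq^\circ A^{\wedge k}(A^{\wedge k})^{m-1}=(A^{\wedge k})^m$. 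Transitivity of $\succeq^\circ$ then closes the induction.

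For Point~\eqref{thm2}, I set $B=A^{\wedge k}$ and expand both sides as sums over the graph of $B$. Iterating the Frobenius property (Theorem~\ref{FP}), valid for an arbitrary exponent in these semirings, gives $(\tr B)^m=\big(\bigoplus_{i} B_{i,i}\big)^m=\bigoplus_{i} B_{i,i}^{\,m}$. On the other hand $\tr(B^m)=\bigoplus_{i}(B^m)_{i,i}$ is the sum, over all closed walks $(v_0,v_1,\dots,v_{m-1},v_0)$ of length $m$, of the weights $B_{v_0,v_1}B_{v_1,v_2}\cdots B_{v_{m-1},v_0}$; the constant walks $(i,\dots,i)$ contribute precisely $\bigoplus_i B_{i,i}^{\,m}=(\tr B)^m$. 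When $\TT=\Mod$ the relation $\succeq^\circ$ is the natural order $\succeq$, so the conclusion is immediate because $\tr(B^m)$ is a sum containing all these constant-walk terms.

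The main obstacle, and the sole point where $\TT=\supertropical$ is needed, is to strengthen $\succeq$ into $\models$. Here I would use the cyclic symmetry of closed walks: rotation of the index tuple $(v_0,\dots,v_{m-1})$ maps closed walks to closed walks and, by commutativity of $\odot$, preserves the weight, hence permutes the summands of $\tr(B^m)$. The tuples fixed by all rotations are exactly the constant ones (the loop terms), while every non-constant tuple has rotation orbit of size equal to its period, which is at least $2$, and the rotated tuples are genuinely distinct summands of the trace. Thus the non-loop part $W$ of $\tr(B^m)$ is a sum of orbit-contributions, each a sum of at least two equal-weight terms; over $\supertropical$ such a sum lies in the ghost ideal (two equal tangibles add to a ghost, and ghosts are absorbed), and a sum of ghosts is again ghost. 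Writing $\tr(B^m)=(\tr B)^m\oplus W$ with $W=c^\nu$, $c=|W|$, gives exactly $\tr(B^m)\models(\tr B)^m$. The details to verify are that non-constant tuples have period at least $2$ and that distinct rotations index distinct diagonal contributions, so that the ghost arithmetic $a\oplus a=a^\nu$ genuinely applies.
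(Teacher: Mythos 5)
Your proposal is correct and follows essentially the same route as the paper: Point (1) by induction on $m$ via Proposition~\ref{CBF} applied to $A\cdot A^{m-1}$ together with the compatibility of $\succeq^\circ$ with the semiring laws, and Point (2) by proving $\tr(B^m)\succeq^\circ(\tr B)^m$ for a general matrix $B$, expanding the trace over closed walks, using Theorem~\ref{FP} to write $(\tr B)^m=\bigoplus_i B_{i,i}^m$, and observing that every non-constant closed-walk weight occurs at least twice among the diagonal entries of $B^m$ (the paper exhibits a single rotation to a base point $t_{i,\ell}\neq i$, you use the full cyclic-orbit count, but the conclusion $a\oplus a\in\TT^\circ$ is the same). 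No gaps.
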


\begin{proof}
\eqref{thm1} is obtained by induction on $m$, using Proposition~\ref{CBF} applied with $B=A^{m-1}$ and using the compatibility of $\succeq^\circ$ with the laws of $\TT$.

For~\eqref{thm2}, we show   more generally that~$\tr(B^m)\succeq^\circ (\tr(B))^m$ for any $B\in \TT^{n\times n}$ when $\TT=\Mod$ or $\TT=\supertropical$. We have
\begin{equation}\label{powcomp}\tr(B^m)=\bigoplus_{i\in[n]}(B^m)_{i,i}=\bigoplus_{i\in[n]}
\bigoplus_{\substack{t_{i,l}\in[n]\\\ell\in[m-1]}}B_{i,t_{i,1}}B_{t_{i,1},t_{i,2}}\cdots B_{t_{i,m-1},i},\end{equation}
and using Theorem~\ref{FP}, we have
\begin{equation}\label{compow}(\tr(B))^m=\bigoplus_{i\in[n]}B_{i,i}^m.\end{equation}
The summing terms in the right hand side of~\eqref{compow} are also terms in the right hand side of~\eqref{powcomp} with $t_{i,\ell}=i,\ \forall \ell\in[m-1],\ \forall i\in[n]$.
For every other term of~\eqref{powcomp}, there exists~$\ell$ s.t.~$t_{i,\ell}\ne i,$ and therefore this term reappears as a term of $(B^m)_{t_{i,\ell},t_{i,\ell}}$:
$$B_{i,t_{i,1}}\cdots B_{t_{i,\ell-1},t_{i,\ell}}\ B_{t_{i,\ell},t_{i,\ell+1}} \cdots B_{t_{i,m-1},i}=B_{t_{i,\ell},t_{i,\ell+1}} 
\cdots B_{t_{i,m-1},i}\ B_{i,t_{i,1}}\cdots B_{t_{i,\ell-1},t_{i,\ell}}\enspace.$$
Since $\ominus \unit=\unit$ in $\TT$, we deduce $\tr(B^m)\succeq^\circ (\tr(B))^m$.
\end{proof}

Using the compatibility of $\succeq^\circ$ with the laws of $\TT$, we deduce the following result.
\begin{cor}\label{compowtr} Let $\TT=\Mod$ or $\TT=\supertropical$, 
$A\in\TT^{n\times n}$, $m\in \mathbb{N}$ and $k\in\{0,\ldots, n\}$. We have:
$$\tr\big((A^{ m})^{\wedge k}\big)\succeq^\circ \big(\tr(A^{\wedge k})\big)^{ m}
\enspace .$$
\end{cor}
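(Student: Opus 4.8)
The plan is to derive the corollary by composing the two parts of Theorem~\ref{PEP} and invoking the transitivity of $\succeq^\circ$. Because $\TT$ is naturally ordered, $\succeq^\circ$ is a genuine order relation, in particular transitive, so a two-step chain suffices and no new combinatorial work is required.

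First I would take the trace of both sides of Part~\eqref{thm1} of Theorem~\ref{PEP}, namely $(A^{m})^{\wedge k}\succeq^\circ(A^{\wedge k})^{m}$. This inequality holds entrywise, and the trace is a finite $\oplus$-sum of diagonal entries; since $\succeq^\circ$ is compatible with $\oplus$, summing the diagonal entries preserves it, yielding $\tr\big((A^{m})^{\wedge k}\big)\succeq^\circ\tr\big((A^{\wedge k})^{m}\big)$. Note that this step uses only the general form of Part~\eqref{thm1} and makes no use of the hypothesis $\TT=\Mod$ or $\TT=\supertropical$.

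Next I would apply Part~\eqref{thm2} of Theorem~\ref{PEP} with the matrix $B=A^{\wedge k}$; under the standing hypothesis $\TT=\Mod$ or $\TT=\supertropical$ this gives $\tr\big((A^{\wedge k})^{m}\big)\succeq^\circ\big(\tr(A^{\wedge k})\big)^{m}$. Chaining the two displayed inequalities by transitivity of $\succeq^\circ$ then produces the claimed bound $\tr\big((A^{m})^{\wedge k}\big)\succeq^\circ\big(\tr(A^{\wedge k})\big)^{m}$.

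There is essentially no obstacle here: all the analytic content is already packaged in Theorem~\ref{PEP}, and the corollary is a formal consequence. The one point deserving a word of care is that passing to the trace preserves $\succeq^\circ$, which is precisely the compatibility of $\succeq^\circ$ with the additive law applied to the diagonal entries; it is also the reason the hypothesis $\TT=\Mod$ or $\TT=\supertropical$ enters only through Part~\eqref{thm2}.
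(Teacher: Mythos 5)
Your proof is correct and follows exactly the route the paper intends: the paper derives the corollary from Theorem~\ref{PEP} by "the compatibility of $\succeq^\circ$ with the laws of $\TT$", i.e.\ applying the trace to Part~(1) and chaining with Part~(2) by transitivity. You have simply spelled out the same two-step argument in more detail.
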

Note that the previous inequality can be written as:
\begin{align*}
\tr\big((A^{ m})^{\wedge k}\big)\models \big(\tr(A^{\wedge k})\big)^{ m}
&\quad\text{over}\; \supertropical\enspace ,\\
\tr\big((A^{ m})^{\wedge k}\big)\succeq \big(\tr(A^{\wedge k})\big)^{ m}
&\quad\text{over} \;\Mod \enspace .
\end{align*}

In Section~\ref{tcp} we shall apply Corollary~\ref{compowtr} to the  characteristic polynomials of~$A$ and 
its powers, and provide over $\supertropical$ an analogue to the property: 
if~$\lambda$ is an eigenvalue of~$A$, then~$\lambda^m$ is an eigenvalue of~$A^m$, which holds over rings.


\subsection{Conjugate matrices}\label{epcm}

\begin{df} We say that a matrix~$A$ is tropically conjugate to~$A'$ if there exists a nonsingular matrix~$E$ 
such that $A'=E^\nabla A E.$\end{df}
Tropical conjugation is not a symmetric relation, as shown in the following example.

\begin{exa} We take
$A=\mathcal{I},\ E=\left(
\begin{array}{cc}
\unit & \unit\\
\zero & \unit
\end{array}
\right)\ \text{ and }\ A'=\mathcal{I}'_E\ \text{ in }\ \supertropical^{^{2\times 2}}.$
 
Obviously~$E^\nabla A E=A'$. Exchanging the roles of $A$ and $A'$, we look for a nonsingular matrix~$F=\left(
\begin{array}{ccc}
a & b\\
c & d
\end{array}
\right)$~s.t. $F^\nabla A' F= A$, which means that
$$\adj(F) A'F=\left(
\begin{array}{cc}
d & b\\
c & a
\end{array}
\right) \mathcal{I}'_E
\left(
\begin{array}{ccc}
a & b\\
c & d
\end{array}
\right)=\left(
\begin{array}{ccc}
a d\oplus b c\oplus c^\nu d & (b d\oplus d^{ 2})^\nu\\&\\
(c a\oplus c^{ 2})^\nu & a d\oplus b c\oplus d^\nu c
\end{array}
\right)=\det(F)\mathcal{I}.$$ In order for the~$(1,2)$ and~$(2,1)$ positions to be~$\zero$, we must require~$c=d=\zero$, 
which means~$\det(F)=\zero$, and therefore such a nonsingular matrix does not exist.
\end{exa}

Conjugate matrices have algebraic value due to the various properties being preserved. In particular, it is a 
key tool in representation theory for describing equivalent representations (see~\cite{RT}), and in linear algebra 
for diagonalizing, triangularizing or Jordanizing matrices (see~\cite{ILA}). 
In the present section we are interested in a well known identity on the  compound matrix of a  conjugation.  
Whereas over fields  the proof is straightforward due to the multiplicativity of the compound operation, in the tropical case  
this property becomes an inequality, unless the conjugating matrix is invertible. Nevertheless,   due to the invertible case, 
we can reduce the desired property to definite matrices.

\begin{lem}\label{CDF} Let~$E,A\in \TT^{n\times n}$. If $E$ is nonsingular, with right definite form~$\bar{E}$, then
\[\tr\big((E^\nabla A E)^{\wedge k}\big)=\tr\big((\bar{E}^\nabla A\bar{E})^{\wedge k}
\big).\]
\end{lem}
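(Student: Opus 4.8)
The plan is to reduce the conjugation $E^\nabla A E$ to a genuine similarity transformation by an invertible matrix, and then to invoke the multiplicativity of the compound under invertible factors together with the cyclic invariance of the trace.

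First I would exploit the right definite form. Since $E$ is nonsingular, write $E=\bar{E}P$, where $\bar{E}$ is definite and $P$ is the invertible right normalizer, with $\det(P)=\det(E)$. Because $P$ is invertible, Corollary~\ref{eqadj} gives equality in Theorem~\ref{ADJ}, so that $\adj(E)=\adj(\bar{E}P)=\adj(P)\adj(\bar{E})$. Multiplying by the central scalar $\det(E)^{-1}=\det(P)^{-1}$ and using $\adj(P)\det(P)^{-1}=P^\nabla=P^{-1}$ (Corollary~\ref{eqadj}) together with $\det(\bar{E})=\unit$ (so that $\adj(\bar{E})=\bar{E}^\nabla$), I obtain the key identity $E^\nabla=P^{-1}\bar{E}^\nabla$.

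This identity turns the conjugation into a similarity, namely $E^\nabla A E=P^{-1}\bigl(\bar{E}^\nabla A\bar{E}\bigr)P$. Writing $M=\bar{E}^\nabla A\bar{E}$ and applying Point~\eqref{eqadj-2} of Corollary~\ref{eqadj} twice, once with the invertible left factor $P^{-1}$ and once with the invertible right factor $P$, equality holds in the Cauchy--Binet inequality of Proposition~\ref{CBF}, giving
\[
(E^\nabla A E)^{\wedge k}=(P^{-1})^{\wedge k}M^{\wedge k}P^{\wedge k}=(P^{\wedge k})^{-1}M^{\wedge k}P^{\wedge k},
\]
where the last equality is again Point~\eqref{eqadj-2} of Corollary~\ref{eqadj}. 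Taking traces and using the cyclicity $\tr(XY)=\tr(YX)$ of Remark~\ref{comtr} then yields $\tr\bigl((E^\nabla A E)^{\wedge k}\bigr)=\tr\bigl(P^{\wedge k}(P^{\wedge k})^{-1}M^{\wedge k}\bigr)=\tr(M^{\wedge k})$, which is exactly the claim.

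The only genuinely delicate step is establishing $E^\nabla=P^{-1}\bar{E}^\nabla$: one must ensure that the adjoint-of-a-product formula holds with \emph{equality} rather than merely with $\succeq^\circ$, which is guaranteed here precisely because the factor $P$ is invertible (Corollary~\ref{eqadj}). Once this exact identity is in hand, the remainder is a routine combination of the multiplicativity of the compound under invertible matrices and the cyclic invariance of the trace; in particular, unlike in Theorem~\ref{QDF}, no separate definiteness argument on $M$ is required.
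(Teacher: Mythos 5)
Your proof is correct and follows essentially the same route as the paper: decompose $E=\bar{E}P$, use the equality cases of Corollary~\ref{eqadj} for the invertible normalizer $P$ to turn the conjugation into a similarity $P^{-1}(\bar{E}^\nabla A\bar{E})P$ and to get multiplicativity of the $k$th compound, then conclude by cyclicity of the trace. The only difference is that you spell out explicitly the derivation of $E^\nabla=P^{-1}\bar{E}^\nabla$ (via $\adj(\bar{E}P)=\adj(P)\adj(\bar{E})$, $\det(E)=\det(P)$ and $\det(\bar{E})=\unit$), which the paper leaves implicit in its appeal to Corollary~\ref{eqadj}.
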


\begin{proof}  We recall that the trace function satisfies $\tr(MN)=\tr(NM)$ for any two square matrices $M,N$.
 Let $P$ be the right normalizer of $E$, that is $E=\bar{E} P$.
Using Corollary~\ref{eqadj}, we get
$$ \tr\big((E^\nabla A E)^{\wedge k}\big)=\tr\big((P^{ -1} \bar{E}^\nabla A\bar{E} P)^{\wedge k}\big)=
\tr\big((P^{ -1})^{\wedge k}(\bar{E}^\nabla A\bar{E})^{\wedge k}(P)^{\wedge k}\big)=$$
$$\tr\big((P)^{\wedge k}(P^{ -1})^{\wedge k} (\bar{E}^\nabla A\bar{E})^{\wedge k}\big)=
\tr\big((\bar{E}^\nabla A\bar{E})^{\wedge k}\big). \qed$$
\renewcommand{\qedsymbol}{}
\end{proof}

\begin{thm}\label{CEP}
Let $E,A\in \TT^{n\times n}$. If $E$ is nonsingular, then $$\tr\big((E^\nabla A E)^{\wedge k}\big)
\succeq^\circ \tr\big(A^{\wedge k}\big),\ \forall k\in[n].$$
\end{thm}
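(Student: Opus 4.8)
The plan is to reduce the statement to a short chain of relations built from the tropical Cauchy--Binet inequality, the cyclic invariance of the trace, and Proposition~\ref{nabcom}. Since $E$ is nonsingular it admits a right definite form $\bar{E}$ with $E=\bar{E}P$ and $P$ invertible, so Lemma~\ref{CDF} gives $\tr\big((E^\nabla AE)^{\wedge k}\big)=\tr\big((\bar{E}^\nabla A\bar{E})^{\wedge k}\big)$, allowing one to assume that $E$ is definite. In fact the argument below never uses definiteness, only the nonsingularity of $E$, which is precisely the hypothesis of Proposition~\ref{nabcom}; so this reduction is available but not indispensable.

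The core step is to apply Proposition~\ref{CBF} (tropical Cauchy--Binet) twice. Writing $E^\nabla AE=(E^\nabla)(AE)$ and then $AE=(A)(E)$, and using transitivity of $\succeq^\circ$ together with its compatibility with the semiring laws (hence with matrix multiplication, since each entry of a product is an $\oplus$ of $\odot$ of entries), I would obtain
\[(E^\nabla AE)^{\wedge k}\succeq^\circ (E^\nabla)^{\wedge k}(AE)^{\wedge k}\succeq^\circ (E^\nabla)^{\wedge k}A^{\wedge k}E^{\wedge k}\enspace.\]
Taking traces and using again the entrywise compatibility of $\succeq^\circ$ with $\oplus$ gives $\tr\big((E^\nabla AE)^{\wedge k}\big)\succeq^\circ \tr\big((E^\nabla)^{\wedge k}A^{\wedge k}E^{\wedge k}\big)$.

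Next I would invoke the cyclic property of the trace (Remark~\ref{comtr}) to move $E^{\wedge k}$ to the front,
\[\tr\big((E^\nabla)^{\wedge k}A^{\wedge k}E^{\wedge k}\big)=\tr\big(E^{\wedge k}(E^\nabla)^{\wedge k}A^{\wedge k}\big)\enspace,\]
and then apply Proposition~\ref{nabcom}, which yields $E^{\wedge k}(E^\nabla)^{\wedge k}\succeq^\circ \mathcal{I}$. Multiplying on the right by $A^{\wedge k}$ and using compatibility of $\succeq^\circ$ with multiplication gives $E^{\wedge k}(E^\nabla)^{\wedge k}A^{\wedge k}\succeq^\circ \mathcal{I}A^{\wedge k}=A^{\wedge k}$, hence $\tr\big(E^{\wedge k}(E^\nabla)^{\wedge k}A^{\wedge k}\big)\succeq^\circ \tr\big(A^{\wedge k}\big)$. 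Chaining the three relations by transitivity of $\succeq^\circ$ delivers the claim for every $k\in[n]$.

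The genuine difficulty has already been absorbed into Proposition~\ref{nabcom}, whose proof required the reduction to the definite case and a combinatorial analysis of elementary bijections; once that is in hand, the only points needing care here are formal: that $\succeq^\circ$ is compatible with matrix multiplication and with the trace (both inherited entrywise from its compatibility with $\oplus$ and $\odot$), and that the trace remains cyclic even though matrix multiplication over $\TT$ is noncommutative. I expect no serious obstacle beyond this bookkeeping. One should, however, remain mindful that $\succeq^\circ$ is only a preorder, so the result is genuinely an inequality rather than an equality, consistent with the non-symmetry of tropical conjugation exhibited in the example preceding the theorem.
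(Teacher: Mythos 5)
Your proof is correct and follows essentially the same route as the paper's: two applications of the tropical Cauchy--Binet inequality (Proposition~\ref{CBF}), cyclicity of the trace, and Proposition~\ref{nabcom} applied to $E^{\wedge k}(E^\nabla)^{\wedge k}\succeq^\circ\mathcal{I}$. Your observation that the reduction to a definite form via Lemma~\ref{CDF} is available but not needed matches the paper, which likewise does not invoke it in this proof.
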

\begin{proof}
Using Proposition~\ref{CBF}, 
the compatibility of $\succeq^\circ$ with the laws of $\TT$,
the commutativity of the trace, and Proposition~\ref{nabcom}, we obtain
$$\tr\big((E^\nabla A E)^{\wedge k}\big)\succeq^\circ \tr\big((E^\nabla)^{\wedge k} A^{\wedge k} 
E^{\wedge k}\big)=\tr\big( \underbrace{E^{\wedge k}(E^\nabla)^{\wedge k}}_{\succeq^\circ \mathcal{I}} 
A^{\wedge k}\big)\succeq^\circ\tr(A^{\wedge k}).\qed $$
\renewcommand{\qedsymbol}{}\end{proof}

In Section~\ref{tcp}  we apply Theorem~\ref{CEP} to the characteristic polynomials of $A$ and 
its conjugates, which provide the desired connection between their eigenvalues.


\section{The Sylvester--Franke identity}\label{sf}
The Sylvester--Franke identity has received a few
 proofs over the years, including diagonalization (see~\cite{UI}) and factorization to elementary matrices  
(see~\cite{SFT}). The tropical version for this identity is provided in this section combinatorially, proving equality over~$\mathbb{R}_{\max}$
and $\supertropical$ in particular.

\begin{thm}[Tropical Sylvester--Franke theorem] \label{TSF} 
 Let $A\in \TT^{n\times n}$,
and $k\in\{0,\ldots, n\}$. The identity 
\begin{equation}\label{eqSF}
\det(A^{\wedge k})=\det(A)^{\left(\substack{n-1\\k-1}\right)}\enspace , 
\end{equation}
holds under one of the following conditions:
\begin{enumerate}
\item \label{TSF1} $\TT=\Mod$, 
in which case $\det$ coincides with $\per$;
\item\label{TSF1bis} $A$ is invertible in $\TT$;
\item\label{TSF2} $A=I\ominus B$ is definite with $B_{ii}=\zero$ for all
$i\in [n]$ and the weight of every cycle in $B$ is  $\curlyeqprec\unit$
(which holds in particular
 when the modulus of the weight of every cycle in $B$ is  
strictly dominated by $\unit$).
\end{enumerate}
\end{thm}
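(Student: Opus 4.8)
The plan is to treat the three cases separately, since the obstacles differ, relying throughout on the observation (already used in the paper) that the arcs selected by a single term of a compound determinant assemble into a regular bipartite multigraph.

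For case~\eqref{TSF1}, where $\TT=\Mod$ and $\det$ coincides with $\per$ (so signs are trivial), I would prove the two inequalities and invoke total order. For the lower bound, fix a dominant permutation $\pi$ of $A$ and use its induced permutation $\pi^{(k)}\in\allperm_{\mathcal{P}_k([n])}$, choosing in each block the bijection $\pi|_I$:
\[
\per(A^{\wedge k})\succcurlyeq\bigodot_{I\in\mathcal{P}_k([n])}A^{\wedge k}_{I,\pi[I]}\succcurlyeq\bigodot_{I}\bigodot_{i\in I}a_{i,\pi(i)}=\bigodot_{i\in[n]}a_{i,\pi(i)}^{\binom{n-1}{k-1}}=\per(A)^{\binom{n-1}{k-1}},
\]
since each $i$ lies in exactly $\binom{n-1}{k-1}$ of the $k$-subsets $I$. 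For the upper bound, a term of $\per(A^{\wedge k})$ is given by a permutation $\Sigma$ of $\mathcal{P}_k([n])$ and bijections $\sigma_I\colon I\to\Sigma(I)$; the arcs $(i,\sigma_I(i))$, with multiplicity, form a bipartite multigraph on $[n]\sqcup[n]$ that is $\binom{n-1}{k-1}$-regular (each left node $i$ lies in $\binom{n-1}{k-1}$ sets $I$, and each right node in $\binom{n-1}{k-1}$ sets $\Sigma(I)$ because $\Sigma$ permutes $\mathcal{P}_k([n])$). A $d$-regular bipartite multigraph splits into $d$ perfect matchings (König), i.e.\ into $\binom{n-1}{k-1}$ permutations of $[n]$, each of weight $\preccurlyeq\per(A)$, so the term is $\preccurlyeq\per(A)^{\binom{n-1}{k-1}}$, and total order gives equality.

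For case~\eqref{TSF1bis}, where $A$ is invertible, I would reduce to a monomial. Since $\TT$ is naturally ordered, $A=D_qP_\pi$, and Remark~\ref{compinv}\eqref{compinv2} gives $A^{\wedge k}=D_hD_{g^{k,\pi}}P_{\pi^{(k)}}$ with $h_I=\bigodot_{i\in I}q_i$ and $g^{k,\pi}(I)=\sign(\pi|_I)$. This is again a monomial, so its determinant is the signed product of its diagonal coefficients:
\[
\det(A^{\wedge k})=\sign(\pi^{(k)})\Big(\bigodot_{I}\sign(\pi|_I)\Big)\Big(\bigodot_{i}q_i\Big)^{\binom{n-1}{k-1}}.
\]
Comparing with $\det(A)^{\binom{n-1}{k-1}}=\sign(\pi)^{\binom{n-1}{k-1}}\big(\bigodot_i q_i\big)^{\binom{n-1}{k-1}}$, the claim reduces to the sign identity $\sign(\pi^{(k)})\bigodot_I\sign(\pi|_I)=\sign(\pi)^{\binom{n-1}{k-1}}$. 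As every sign is a power of $\ominus\unit$ with $(\ominus\unit)^2=\unit$, this is an equation $(\ominus\unit)^a=(\ominus\unit)^b$ with $a\equiv b\pmod 2$; the congruence is exactly the classical Sylvester--Franke identity for the integer permutation matrix $P_\pi$, so it transfers (alternatively one derives it from the sign factorisations in Proposition~\ref{resign}).

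The delicate case, and where I expect the real work, is~\eqref{TSF2}. Here $A=\mathcal I\ominus B$ is definite, $\det(A)=\unit$, and the target is $\det(A^{\wedge k})=\unit$. By Lemma~\ref{n-1def} the diagonal of $A^{\wedge k}$ is $\unit$, so the identity permutation contributes $\unit$ and $\det(A^{\wedge k})\succcurlyeq\unit$. For the reverse I would expand $\det(A^{\wedge k})$ as in case~\eqref{TSF1}, decompose the arcs of each term into $\binom{n-1}{k-1}$ permutations of $[n]$, and group them into cycles; every nontrivial cycle is a cycle of $B$ (off-diagonal entries are $\ominus b_{i,j}$), hence $\curlyeqprec\unit$ by hypothesis, so the unsigned weight of each non-identity term is $\curlyeqprec\unit$. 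A positive-sign term is thus $\curlyeqprec\unit$ and absorbed by the identity contribution, while a negative-sign term is only $\curlyeqprec\ominus\unit$; writing the sum of the negative terms as $N\curlyeqprec\ominus\unit$ and applying Corollary~\ref{cormoins} with $b=\unit\in\TT^\vee$ gives either $N\curlyeqprec\unit$, whence $\det(A^{\wedge k})=\unit\oplus N=\unit$, or the forbidden value $N=\ominus\unit$, which would yield the singular element $\unit^\circ$. The main obstacle is precisely to exclude $N=\ominus\unit$: the plan is to organise the cancellations exactly as in the proof of Theorem~\ref{QDF}, showing that a negative-sign term whose modulus equals $\unit$ reappears, after a transposition swap of two underlying blocks, with the opposite sign, so that such terms occur in balancing pairs and cannot sum to the invertible element $\ominus\unit$. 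When the modulus of every cycle of $B$ is strictly dominated by $\unit$ this is automatic, since then every non-identity term has modulus $\prec\unit$ and is absorbed directly by Property~\ref{prop1}. In all subcases one obtains $\det(A^{\wedge k})\curlyeqprec\unit$, and together with $\det(A^{\wedge k})\succcurlyeq\unit$ the antisymmetry of $\curlyeqprec$ forces $\det(A^{\wedge k})=\unit=\det(A)^{\binom{n-1}{k-1}}$; as a sanity check, applying the modulus (a morphism, so $|A^{\wedge k}|=|A|^{\wedge k}$) reduces $|\det(A^{\wedge k})|$ to case~\eqref{TSF1} over $\Mod$ and confirms $|\det(A^{\wedge k})|=\unit$, pinning the identity block as the unique non-dominated contribution.
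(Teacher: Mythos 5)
Your cases~\eqref{TSF1} and~\eqref{TSF1bis} are correct and follow essentially the paper's own route: the regular-bipartite-multigraph/Birkhoff decomposition of the exponent matrices for $\TT=\Mod$, and the reduction of the invertible case to a sign identity for permutation matrices settled by transfer from the classical Sylvester--Franke identity over rings.

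The gap is in case~\eqref{TSF2}, precisely at the point you yourself flag as ``the main obstacle.'' You use only the hypothesis that every nontrivial cycle of $B$ has weight $\curlyeqprec\unit$, which leaves a negative-sign summand merely $\curlyeqprec\ominus\unit$, and you then propose to exclude $N=\ominus\unit$ by pairing such summands with opposite-sign partners as in the proof of Theorem~\ref{QDF}. That route cannot deliver the theorem: a balancing pair $(\ominus P)\oplus P$ with $|P|=\unit$ produces $\unit^\circ$, and by Lemma~\ref{lem-propnew} one has $\unit\oplus\unit^\circ=\unit^\circ\neq\unit$, so a cancellation argument can at best establish $\det(A^{\wedge k})\succeq^\circ\unit$ (a balance), never the asserted equality $\det(A^{\wedge k})=\unit$. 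Your modulus ``sanity check'' does not repair this, since $|\det(A^{\wedge k})|=\unit$ is equally consistent with the singular value $\unit^\circ$.

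The missing ingredient is a second, automatic bound coming from definiteness: by Point~\eqref{dfs2} of Lemma~\ref{dfs} (restated in Theorem~\ref{DF}), every nontrivial cycle of $B$ also has weight $\curlyeqprec\ominus\unit$. A non-identity summand of $\det(A^{\wedge k})$ has the form $\epsilon\odot w_1\odot\cdots\odot w_m$ with $\epsilon\in\{\unit,\ominus\unit\}$, $m\geq 1$, and each $w_i$ the weight of a nontrivial cycle of $B$. If $\epsilon=\unit$, multiplying the relations $w_i\curlyeqprec\unit$ and using transitivity gives that the summand is $\curlyeqprec\unit$; if $\epsilon=\ominus\unit$, use $w_1\curlyeqprec\ominus\unit$ (equivalently $\ominus w_1\curlyeqprec\unit$, since $(\ominus\unit)^2=\unit$) together with $w_i\curlyeqprec\unit$ for $i\geq 2$ to reach the same conclusion. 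Hence every non-identity summand, whatever its sign, is absorbed by the identity contribution, and $\det(A^{\wedge k})=\unit$ outright, with no need for Corollary~\ref{cormoins} or for any cancellation scheme.
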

\begin{proof}
 Let $A=(a_{i,j})\in \TT^{n\times n}$,
and let $A^{\wedge k}=(A^{\wedge k}_{I,J})$ be its~$k$th compound matrix. 

\eqref{TSF1} Assume $\TT=\Mod$.
Then, $\det$ coincides with $\per$. We have 
\begin{equation}\label{detcomp}\per(A^{\wedge k})=\bigoplus_{\pi\in\allperm_{\mathcal{P}_k([n])
}}\bigodot_{I\in \mathcal{P}_k([n])}A^{\wedge k}_{I,\pi(I)}=
\bigoplus_{\pi\in\allperm_{\mathcal{P}_k([n])
}}\bigodot_{I\in \mathcal{P}_k([n])}
\left(\bigoplus_{\sigma\in\allperm_{I,\pi(I)}}\bigodot_{i\in I}a_{i,\sigma(i)}\right)\enspace ,\end{equation} and
\begin{equation}\label{powdet1}\per(A)^{\left(\substack{n-1\\k-1}\right)}=
\left(\bigoplus_{\rho\in\allperm_{[n]}} \bigodot_{i\in[n]}a_{i,\rho(i)}\right)^{\left(\substack{n-1\\k-1}\right)} 
\enspace .\end{equation}
Moreover, using Theorem~\ref{FP}, we also have
\begin{equation}\label{powdet}\per(A)^{\left(\substack{n-1\\k-1}\right)}=
\bigoplus_{\rho\in\allperm_{[n]}}
\big(\bigodot_{i\in[n]}a_{i,\rho(i)}\big)^{\left(\substack{n-1\\k-1}\right)}.\end{equation}
Developing~\eqref{detcomp},~\eqref{powdet1} and~\eqref{powdet}
using the distributivity of the multiplication with respect to addition,
we arrive at a sum such that
 each summand is a product of $\underbrace{\left(\substack{n\\k}\right)\cdot k}_{\text{in~\eqref{detcomp}}}=
\underbrace{\left(\substack{n-1\\k-1}\right)\cdot n}_{\text{in~\eqref{powdet1} and~\eqref{powdet}}}$  entries of $A$.
We shall show that each summand in~\eqref{detcomp} is a summand 
in~\eqref{powdet1}, and that each summand in~\eqref{powdet} is a summand 
in~\eqref{detcomp}.

For each summand in~\eqref{detcomp},~\eqref{powdet1} or~\eqref{powdet},
we shall consider the $n\times n$ integer matrix 
$B=(b_{i,j})$ such that $b_{i,j}$ is the number of occurrences of the factor
$a_{i,j}$ in the summand, which means that the summand is equal to 
$\bigodot_{i,j\in[n]}a_{i,j}^{b_{i,j}}$.
Equivalently, one may consider the multigraph with set of nodes $[n]$,
and $b_{i,j}$ arcs between $i$ and $j$, for each $i,j \in [n]$.
From the above remark, $B$ satisfies necessarily 
$\sum_{i,j\in [n]} b_{i,j}= \left(\substack{n\\k}\right)\cdot k$.

\underline{Summands in~\eqref{detcomp} are summands in~\eqref{powdet1}:} 
Let $B$ be an integer matrix associated to a summand in~\eqref{detcomp}.
For every~$m\in[n]$, the number of sets~$I\in \mathcal{P}_k([n])$ such that~$m\in I$, 
is~$\left(\substack{n-1\\k-1}\right)$.
Thus, $\sum_{j\in [n]} b_{m,j}=\left(\substack{n-1\\k-1}\right)$.
By symmetry, that is indexing the product in~\eqref{detcomp} by the
image $J$ of $\pi$, we get that
$\sum_{i\in [n]} b_{i,m}=\left(\substack{n-1\\k-1}\right)$ for all $m\in [n]$.
Using the Birkhoff-von Neumann theorem  (or Hall's Theorem,
see e.g.\ Hall, {\cite[Theorem 5.1.9]{hall}}), 
we get that $B$ can be written as the sum of 
$\left(\substack{n-1\\k-1}\right)$ permutation matrices
(in the usual sense).
Since the matrix associated to the permutation  $\rho\in\allperm_{[n]}$
corresponds to $\bigodot_{i\in[n]}a_{i,\rho(i)}$, we obtain
that $B$ is the integer matrix associated to the product of
$\left(\substack{n-1\\k-1}\right)$ such products.
This means that the summand of~\eqref{detcomp} considered initially
is also a summand in~\eqref{powdet1}.

\underline{Summands in~\eqref{powdet} are summands in~\eqref{detcomp}:} 
Let $B$ be the integer matrix associated to a summand in~\eqref{powdet}.
We have $B=\left(\substack{n-1\\k-1}\right) P$, where $P$ is the
matrix  of some permutation $\rho\in\allperm_{[n]}$.
Consider $\pi=\rho^{(k)}\in\allperm_{\left[\mathcal{P}_k([n])\right]}$, and,
for all $I\in \mathcal{P}_k([n])$, take $\sigma=\rho|_I$ in~\eqref{detcomp}.
We get that $\sigma\in\allperm_{I,\pi(I)}$ and
$\sigma(i)=\rho(i)$ for all $i\in I$.
Since the number of sets $I\in \mathcal{P}_k([n])$ such that~$i\in I$
is equal to $\left(\substack{n-1\\k-1}\right)$, we obtain that 
$\bigodot_{i,j\in[n]}a_{i,j}^{b_{i,j}}= \left(\bigodot_{i\in[n]}a_{i,\rho(i)}\right)^{_{\left(\substack{n-1\\k-1}\right)}}=\bigodot_{I\in \mathcal{P}_k([n])}
\left(\bigodot_{i\in I} a_{i,\rho|_I(i)}\right)$, so that
the summand of~\eqref{powdet} considered initially
is also a summand in~\eqref{detcomp}.

Now since $\TT=\Mod$, $\TT$ is totally ordered, thus idempotent.
Hence,  $\per(A^{\wedge k})\preceq \per(A)^{\left(\substack{n-1\\k-1}\right)}$
because summands in~\eqref{detcomp} are summands in~\eqref{powdet1},
and $\per(A)^{\left(\substack{n-1\\k-1}\right)} \preceq \per(A^{\wedge k})$
because summands in~\eqref{powdet} are summands in~\eqref{detcomp}.
This shows~\eqref{eqSF}.

\eqref{TSF1bis} 
Assume that $A$ is invertible in $\TT$. Then, $A$ is a monomial matrix.
From Remark~\ref{compinv}, for a monomial matrix $A$ on any semiring $\SS$,
$A^k$ is a monomial matrix. This implies that in that case,
both sides of~\eqref{eqSF} are monomials in the non-zero entries of $A$
with a coefficient equal to $\unit$ or $\ominus\unit$,
depending on the sign of the permutation associated to $A$, on $k$ and $n$.
Since the equality in~\eqref{eqSF} holds on any commutative ring,
it also holds on any semiring $\TT$ for a monomial matrix
(see for instance~\cite{reutstraub} or~\cite{LDTS}).

\eqref{TSF2}  
Assume that $A=I\ominus B$ is definite with $B_{ii}=\zero$ for all
$i\in [n]$ and that the weight of every cycle in $B$ is  $\curlyeqprec\unit$.
Recall that this condition holds in particular
 when the modulus of the weight of every cycle in $B$ is  
strictly dominated by $\unit$, see Theorem~\ref{DF}.
Also by Theorem~\ref{DF}, under the above conditions, we also have that
the weight of every cycle in $B$ is $\curlyeqprec\ominus\unit$.
Since $\det(A)=\unit$, we only need to show that $\det(A^k)=\unit$.
We have
\begin{eqnarray}
\det(A^{\wedge k})&=&\bigoplus_{\pi\in\allperm_{\mathcal{P}_k([n])}}\sign(\pi)\bigodot_{I\in \mathcal{P}_k([n])}A^{\wedge k}_{I,\pi(I)}\nonumber\\
&=&
\bigoplus_{\pi\in\allperm_{\mathcal{P}_k([n])}}\sign(\pi)\bigodot_{I\in \mathcal{P}_k([n])}
\left(\bigoplus_{\sigma\in\allperm_{I,\pi(I)}}\sign(\sigma)\bigodot_{i\in I}a_{i,\sigma(i)}
\right)\enspace .
\label{detcomp2}\end{eqnarray}
By the arguments above, each summand in~\eqref{detcomp2}
is equal to a summand in~\eqref{powdet1} times $\unit$ or $\ominus\unit$,
thus it is equal to the product of $\left(\substack{n-1\\k-1}\right)$
signed permutations of $A$ times $\unit$ or $\ominus\unit$.
decomposing permutations into cycles, and using that $A=I\ominus B$
and $B_{i,i}=\zero$, we get that if one of these permutations is not 
equal to the identity, the summand is equal to 
a (nonempty) product of  weights of nontrivial cycles in $B$ times 
$\unit$ or $\ominus\unit$.
In that case, using that the weight of every cycle in $B$ is 
 $\curlyeqprec\unit$, and also  $\curlyeqprec\ominus\unit$,
we deduce that the summand is $\curlyeqprec\unit$.
Otherwise, if all the permutations are equal to the identity, 
the summand corresponds to the permutation
$\pi$ equal to identity and all bijections $\sigma$ equal to the identity, 
in which case the summand is equal to $\unit$,  
since all diagonal entries of $A$ are equal to $\unit$.
In all, this imply that~\eqref{detcomp2} is equal to $\unit$ and so
\eqref{eqSF} holds.
\end{proof}

\begin{cor}
\label{TSF-cor} Let $A\in \TT^{n\times n}$,
and $k\in\{0,\ldots, n\}$. We have
\begin{equation}\label{eqSFmod}
|\det(A^{\wedge k})|=|\det(A)|^{\left(\substack{n-1\\k-1}\right)}\enspace . 
\end{equation}
Moreover,~\eqref{eqSF} holds under one of the following conditions:
\begin{enumerate}
\item \label{TSF-cor1} $\ominus \unit=\unit$ and $A$ is nonsingular;
\item \label{TSF-cor2} $\TT=\supertropical$, $A$ being not necessarily
nonsingular.
\end{enumerate}
\end{cor}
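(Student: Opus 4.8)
The plan is to prove the three assertions separately, in each case reducing to a part of Theorem~\ref{TSF} that is already available, and to establish the modulus identity~\eqref{eqSFmod} first since it feeds the other two. For~\eqref{eqSFmod} I would push everything through the modulus $\mu$. Since $\mu$ is a semiring morphism satisfying $|\ominus a|=|a|$, applying it entrywise to the definition of $A^{\wedge k}$ kills all the signs, giving $|A^{\wedge k}|=(|A|)^{\wedge k}$ computed over $\Mod$ (where $\wedge k$ uses the permanents of the $k\times k$ minors). Combining this with the fact recalled in Section~\ref{tma} that $|\det(\cdot)|=\per(|\cdot|)$, I obtain $|\det(A^{\wedge k})|=\det\big((|A|)^{\wedge k}\big)$ over $\Mod$. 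Now part~\eqref{TSF1} of Theorem~\ref{TSF}, applied to $|A|$ over the totally ordered idempotent semifield $\Mod$, yields $\det\big((|A|)^{\wedge k}\big)=\det(|A|)^{\left(\substack{n-1\\k-1}\right)}=|\det(A)|^{\left(\substack{n-1\\k-1}\right)}$, which is exactly~\eqref{eqSFmod}.

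For~\eqref{TSF-cor1} I would use the definite-form factorization. Writing the nonsingular matrix as $A=P\bar A$ with $P$ invertible, $\bar A$ definite and $\det(A)=\det(P)$, Corollary~\ref{eqadj} gives $A^{\wedge k}=P^{\wedge k}\bar A^{\wedge k}$ with $P^{\wedge k}$ invertible, and part~\eqref{detAB1} of Theorem~\ref{detAB} then gives the multiplicativity $\det(A^{\wedge k})=\det(P^{\wedge k})\det(\bar A^{\wedge k})$. For the invertible factor, part~\eqref{TSF1bis} of Theorem~\ref{TSF} gives $\det(P^{\wedge k})=\det(P)^{\left(\substack{n-1\\k-1}\right)}$. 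For the definite factor it suffices to show $\det(\bar A^{\wedge k})=\unit$: writing $\bar A=\mathcal I\ominus B$ with $B_{ii}=\zero$, the first assertion of Theorem~\ref{DF} (a consequence of Point~\eqref{dfs2} of Lemma~\ref{dfs}) tells us that every cycle of $B$ has weight $\curlyeqprec\ominus\unit$, which becomes $\curlyeqprec\unit$ precisely because $\ominus\unit=\unit$. Hence $\bar A$ satisfies the hypothesis of part~\eqref{TSF2} of Theorem~\ref{TSF}, so $\det(\bar A^{\wedge k})=\det(\bar A)^{\left(\substack{n-1\\k-1}\right)}=\unit$, and multiplying the two factors gives~\eqref{eqSF}.

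For~\eqref{TSF-cor2} over $\supertropical$ (where $\det=\per$ and $\ominus\unit=\unit$), the identity~\eqref{eqSFmod} already pins down $|\per(A^{\wedge k})|=|\per(A)|^{\left(\substack{n-1\\k-1}\right)}$, so the only remaining task is to match the ghost/tangible type of the two sides. Here I would reuse the combinatorial correspondence from the proof of part~\eqref{TSF1} of Theorem~\ref{TSF}: after expanding both sides into monomials in the entries, each summand of $\per(A^{\wedge k})$ is encoded by an integer matrix with all row and column sums equal to $N:=\left(\substack{n-1\\k-1}\right)$, i.e.\ by a sum of $N$ permutation matrices, and has modulus $\sum_{i,j}b_{i,j}|a_{i,j}|$. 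A Birkhoff/linear-programming argument identifies the maximal-modulus summands as exactly those whose integer matrix is a sum of $N$ \emph{optimal} permutations of $A$; moreover the matrix $NP_{\rho}$ of a single permutation $\rho$ has a unique preimage summand, with value $\big(\bigodot_i a_{i,\rho(i)}\big)^{N}$. I would then split on $\per(A)$: if there is a unique optimal $\rho^*$ with tangible weight, $NP_{\rho^*}$ is the unique maximal-modulus summand and is tangible, forcing $\per(A^{\wedge k})=\big(\bigodot_i a_{i,\rho^*(i)}\big)^{N}=\per(A)^{N}$; if the unique optimal $\rho^*$ has ghost weight, that same unique summand is ghost; and if two optimal permutations exist, there are at least two distinct maximal-modulus summands, again producing a ghost. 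In every case $\per(A^{\wedge k})$ is ghost exactly when $\per(A)$ is, and together with the matching moduli this gives $\per(A^{\wedge k})=\per(A)^{N}$ (the degenerate case $\per(A)=\zero$ being immediate from~\eqref{eqSFmod}).

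The main obstacle is the third part: unlike over $\Mod$, the non-idempotent addition of $\supertropical$ means the set-theoretic correspondence of summands used for part~\eqref{TSF1} is not by itself sufficient, and one must control both the \emph{number} of maximal-modulus summands and their individual tangibility. The delicate point is verifying that the maximal-modulus integer matrix coming from a unique optimal permutation has exactly one preimage in the expansion of $\per(A^{\wedge k})$, so that no accidental doubling, and hence no spurious ghost, occurs in the tangible case.
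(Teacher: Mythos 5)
Your proof is correct. For the modulus identity~\eqref{eqSFmod} and for Condition~\eqref{TSF-cor1} you follow essentially the paper's route (push through the modulus and invoke Point~\eqref{TSF1} of Theorem~\ref{TSF}; factor $A=P\bar A$, handle the monomial factor by Point~\eqref{TSF1bis} and the definite factor by Point~\eqref{TSF2} after noting that $\ominus\unit=\unit$ upgrades $\curlyeqprec\ominus\unit$ to $\curlyeqprec\unit$). For Condition~\eqref{TSF-cor2} you take a genuinely different route. The paper first disposes of nonsingular $A$ via Condition~\eqref{TSF-cor1}, and for singular $A$ runs a squeeze: $b=\per(A)^{\binom{n-1}{k-1}}$ is ghost, hence maximal among elements of its modulus, giving $a\preceq b$; and the injectivity of the correspondence $\rho\mapsto(\rho^{(k)},(\rho|_I)_I)$ between summands of~\eqref{powdet} and of~\eqref{detcomp} gives $b\preceq a$. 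You instead argue uniformly (no split on singularity) by classifying the dominant summands of the expanded $\per(A^{\wedge k})$: Birkhoff forces every dominant integer matrix to be a sum of $\binom{n-1}{k-1}$ optimal permutations, and your "unique preimage" claim for $NP_{\rho}$ is indeed correct (row sums force $\sigma_I=\rho|_I$ for every $I\ni i$, hence $\pi=\rho^{(k)}$), so the count and tangibility of dominant terms on the two sides match in each of your three cases. Your argument is more self-contained and makes the combinatorics of ghostness explicit, at the price of the extra Birkhoff/uniqueness analysis; the paper's is shorter because it recycles the already-proved summand correspondence and the order-theoretic fact that ghosts are maximal in their modulus class, but it needs the nonsingular case as a separate stepping stone.
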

\begin{proof}
The first assertion follows from Point~\eqref{TSF1} of Theorem~\ref{TSF} 
applied to $|A|$, and the property that the modulus is a morphism.

\eqref{TSF-cor1} Let $A$ be nonsingular, and let $A=P\bar{A}$
be any definite form. Using Corollary~\ref{eqadj}, and Point~\eqref{TSF1bis}
of Theorem~\ref{TSF}, we obtain that~\eqref{eqSF} holds for $A$ as soon as
it holds for the definite matrix $\bar{A}$.
When $A$ is definite, writing it as $A=I\ominus B$ as in 
Theorem~\ref{DF}, and using
Theorem~\ref{DF} and $\unit=\ominus \unit$, we get that
the weight of every cycle in $B$ is  $\curlyeqprec\unit$.
Then, by Point~\eqref{TSF2} of Theorem~\ref{TSF},
we deduce that~\eqref{eqSF} holds for $A$.

\eqref{TSF-cor2}  Assume $\TT=\supertropical$.
If $A$ is nonsingular, then~\eqref{eqSF} holds by the previous point 
since $\ominus \unit=\unit$.
Assume now that $A$ is singular. 
Let $a=\det(A^k)$ and 
$b=\det(A)^{\left(\substack{n-1\\k-1}\right)}$ be the left and right hand sides
of~\eqref{eqSF}, respectively.
From the first assertion of the present corollary, we have $|a|=|b|$.
Since  $A$ is singular, we get that $b\in\TT^\circ$ (that is $b\in 
\R^\nu\cup\{-\infty\}$), thus $b$ is the maximal element of $\TT$ 
with modulus equal to $|b|$, which implies that $a\preceq b$.

Now, since $\det$ coincides with $\per$, and
Theorem~\ref{FP} holds for $\TT=\supertropical$, 
Equalities~\eqref{detcomp},~\eqref{powdet1} and~\eqref{powdet} hold true.
Also, the implications between summands shown 
in the proof of  Point~\eqref{TSF1} of Theorem~\ref{TSF} are 
also valid.
In addition, the correspondence between summands in~\eqref{powdet} 
and summands in~\eqref{detcomp} constructed in that proof is one to one.
Indeed, if $\rho\neq \rho'$, then $\rho^{(k)}\neq (\rho')^{(k)}$.
This implies that the sum in~\eqref{powdet} is $\preceq$ to the sum 
in~\eqref{detcomp}, that is $b\preceq a$, so $b=a$,
which finishes the proof of~\eqref{eqSF}.
\end{proof}

\begin{exa}
Note that the condition $\ominus\unit=\unit$ in Point~\eqref{TSF-cor1}
of Corollary~\ref{TSF-cor} is necessary.
Indeed, let $\TT=\smax$  and
\[ A=\left(\begin{array}{ccc}\unit&\ominus \unit & \zero\\
\unit&\unit&\ominus \unit\\
\unit&\unit&\unit\end{array}\right)\enspace .\]
Then $\det(A)=\unit$ so that $A$ is nonsingular. However,
\[ A^{\wedge 2}=\left(\begin{array}{ccc}\unit&\ominus \unit & \unit\\
\unit&\unit&\ominus \unit\\
\unit^\circ&\unit&\unit\end{array}\right)\enspace ,\]
and $\det( A^{\wedge 2})=\unit^\circ\neq \det(A)^2 $.
\end{exa}


\section{The tropical characteristic polynomial}\label{tcp}
The  characteristic polynomial over 
the symmetrized and supertropical semirings are studied in~\cite{LDTS} 
and~\cite{STLA}, respectively, whose terminology we follow. 
In this section, we investigate tropical characteristic polynomials of a matrix, its powers, its quasi-inverse and its
 conjugates, using the results of Sections~\ref{jacobi} and~\ref{oid}. We obtain an analogue to  properties  connecting   the eigenvalues of these matrices, which are wider in the extended tropical semiring,
and we address  the special case where the coefficients of the characteristic polynomial are invertible.

Let us first give some definitions.
From Properties~\ref{prop1} and~\ref{prp1}, 
a polynomial over~$\TT$ takes generally the value of monomials of highest absolute value. Yet some 
monomials do not dominate for any~$x\in\TT$.

\begin{df}
Let $f=\oplus_{k=0}^n a_k X^k \in \TT[X]$  be a formal polynomial over $\TT$ and let $f(x)$ be its 
evaluation in $x\in\TT$.
 We call monomials $a_k x^k$ that dominate~$f(x)$ at some $x\in \TT$
(that is such that $|a_k x^k|=|f(x)|$) \myemph{essential} at $x$, and 
monomials that do not dominate~$f(x)$  for any~$x\in \TT$ \myemph{inessential}. 

 We call an element~$r \in \TT^\vee$ a \myemph{root} of~$f$, if~$f(r)\in \TT^\circ$. 
If $r$ is a root such that either $r\in\TT^*$ and
$f(r)$ is the sum of at least two essential monomials at $r$ that have nonsingular coefficients, that is, there exists
a set $S\subset\{0\leq k\leq n\mid |a_k r^k|=|f(r)|\; \text{and}\; a_k\in \TT^*\}$
with at least two elements,
such that $f(r)=\oplus_{k \in S} a_k r^k$,
or $r=\zero$ and $f(r)=\zero$, then $r$ 
will be called a \myemph{corner root}.
Roots that are not corner roots will be called \myemph{non-corner roots}.
 \end{df}
On can also give the following different definitions of a corner root.
\begin{lem}\label{lem-corn}
Let $r\in\TT^*$ be a root of the polynomial
$f=\oplus_{k=0}^n a_k X^k \in \TT[X]$. Then, the following are equivalent:
\begin{enumerate}
\item\label{corn1}   $r$ is a corner root  of $f$;
\item\label{corn2}   $S=\{0\leq k\leq n \mid |a_k r^k|=|f(r)|\; \text{and}\; a_k\in \TT^*\}$
has at least two elements and $f(r)=\oplus_{k \in S} a_k r^k$.
\end{enumerate}
If $\TT\neq \TT^\circ$, they are also equivalent to:
\begin{enumerate}
\addtocounter{enumi}{2}
\item\label{corn3}  $f(r)=\oplus_{k \in S} a_k r^k$ with $S=\{0\leq k\leq n \mid |a_k r^k|=|f(r)|\; \text{and}\; a_k\in \TT^*\}$;
\item\label{corn4}  $f(r)=\oplus_{k \in S} a_k r^k$ with $S=\{0\leq k\leq n \mid a_k\in \TT^*\}$;
\item\label{corn5}  $f(r)=\oplus_{k \in S} a_k r^k$ for some set 
$S\subset\{0\leq k\leq n \mid a_k\in \TT^*\}$.
\end{enumerate}
Moreover, an invertible root such that all essential monomials
have nonsingular coefficients is necessarily a corner root.
\end{lem}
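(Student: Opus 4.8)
The plan is to fix the invertible root $r\in\TT^*$, set $M=|f(r)|$, and work with three index sets: $E=\{0\le k\le n: |a_kr^k|=M\}$ (the monomials essential at $r$), $S=\{k\in E: a_k\in\TT^*\}$ (the set appearing in items~\eqref{corn2} and~\eqref{corn3}), and $T=\{0\le k\le n: a_k\in\TT^*\}$ (the ambient set in items~\eqref{corn4} and~\eqref{corn5}), so that $S=E\cap T$. Two facts will be used throughout. First, since the modulus is a morphism into the totally ordered idempotent semifield $\Mod$, it is monotone for $\preccurlyeq$; hence $|a_kr^k|\preceq M$ for every $k$, with equality exactly on $E$. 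Second, Property~\ref{prop1} lets me discard from any sum whose value has modulus $M$ all terms of strictly smaller modulus; in particular $f(r)=\bigoplus_{k\in E}a_kr^k$. I will also repeatedly invoke the \emph{squeeze} furnished by antisymmetry of the natural order on the naturally ordered semiring $\TT$: if $U\subseteq V\subseteq E$ and $\bigoplus_{k\in U}a_kr^k=f(r)$, then $\bigoplus_{k\in U}a_kr^k\preccurlyeq\bigoplus_{k\in V}a_kr^k\preccurlyeq\bigoplus_{k\in E}a_kr^k=f(r)$ forces all three sums to coincide.

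For \eqref{corn1}$\iff$\eqref{corn2}, note that \eqref{corn2} is the special case $S_0=S$ of the corner-root definition, so \eqref{corn2}$\Rightarrow$\eqref{corn1} is immediate. Conversely, a corner root supplies some $S_0\subseteq S$ with $|S_0|\ge 2$ and $f(r)=\bigoplus_{k\in S_0}a_kr^k$; the squeeze with $U=S_0$, $V=S$ upgrades this to $f(r)=\bigoplus_{k\in S}a_kr^k$, while $|S|\ge|S_0|\ge 2$, which is \eqref{corn2}. This equivalence uses neither the modulus refinement nor $\TT\ne\TT^\circ$.

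Assuming now $\TT\ne\TT^\circ$, the ideal $\TT^\circ$ contains no invertible element, so $\TT^*\cap\TT^\circ=\emptyset$; I will close the cycle \eqref{corn2}$\Rightarrow$\eqref{corn3}$\Rightarrow$\eqref{corn4}$\Rightarrow$\eqref{corn5}$\Rightarrow$\eqref{corn2}. The first implication merely drops the cardinality clause. For \eqref{corn3}$\Rightarrow$\eqref{corn4}, the extra indices $T\setminus S$ carry invertible coefficients but satisfy $|a_kr^k|\prec M$, so their partial sum has modulus $\prec M=|\bigoplus_{k\in S}a_kr^k|$, and Property~\ref{prop1} gives $\bigoplus_{k\in T}a_kr^k=\bigoplus_{k\in S}a_kr^k=f(r)$. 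The step \eqref{corn4}$\Rightarrow$\eqref{corn5} is the choice $S_0=T$. Finally, in \eqref{corn5}$\Rightarrow$\eqref{corn2} start from $f(r)=\bigoplus_{k\in S_0}a_kr^k$ with $S_0\subseteq T$; since this value has modulus $M$ and $\Mod$ is idempotent, some index of $S_0$ lies in $E$, and discarding the indices of $S_0\setminus E$ (terms of modulus $\prec M$) via Property~\ref{prop1} gives $f(r)=\bigoplus_{k\in S_0\cap E}a_kr^k$ with $\emptyset\ne S_0\cap E\subseteq E\cap T=S$. The squeeze then yields $f(r)=\bigoplus_{k\in S}a_kr^k$. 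It remains to see $|S|\ge 2$: a singleton $S=\{k_0\}$ would give $f(r)=a_{k_0}r^{k_0}\in\TT^*$ as a product of invertibles, contradicting $f(r)\in\TT^\circ$ since $\TT^*\cap\TT^\circ=\emptyset$. Hence \eqref{corn2} holds.

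For the closing assertion, suppose $r\in\TT^*$ is a root all of whose essential monomials are nonsingular, i.e.\ $E\subseteq T$, equivalently $E=S$. Then $f(r)=\bigoplus_{k\in E}a_kr^k=\bigoplus_{k\in S}a_kr^k$, which is the equation of \eqref{corn2}. Moreover $f(r)\ne\zero$ (a zero value would, by zero-sum freeness and invertibility of $r$, force every $a_k=\zero$, making the essential monomials singular), so $E\ne\emptyset$, while $|E|=1$ would again give $f(r)\in\TT^*\cap\TT^\circ=\emptyset$; thus $|S|=|E|\ge 2$ and $r$ is a corner root by \eqref{corn2}$\Rightarrow$\eqref{corn1}. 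I expect the principal difficulty to be conceptual rather than computational: one must keep \emph{singular} (meaning not in $\TT^*$) cleanly separate from membership in $\TT^\circ$, and isolate precisely where $\TT\ne\TT^\circ$ is indispensable, namely in every step forcing cardinality $\ge 2$ — including the closing assertion, which genuinely requires $\TT^*\cap\TT^\circ=\emptyset$ and fails when $\TT=\TT^\circ$ (over $\Mod$ a strict unique maximizer is a non-corner root with a single nonsingular essential monomial).
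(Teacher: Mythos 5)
Your proposal is correct and uses essentially the same ingredients as the paper's proof: the order-squeeze via antisymmetry of $\preccurlyeq$ for passing to larger index sets, Property~\ref{prop1} to discard monomials of strictly smaller modulus, and the disjointness $\TT^*\cap\TT^\circ=\emptyset$ to force $|S|\geq 2$; the only difference is that you traverse the cycle as \eqref{corn2}$\Rightarrow$\eqref{corn3}$\Rightarrow$\eqref{corn4}$\Rightarrow$\eqref{corn5}$\Rightarrow$\eqref{corn2} while the paper closes it as \eqref{corn1}$\Rightarrow$\eqref{corn5}$\Rightarrow$\eqref{corn4}$\Rightarrow$\eqref{corn3}$\Rightarrow$\eqref{corn1}, which is immaterial. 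Your explicit remark that the closing assertion genuinely needs $\TT\neq\TT^\circ$ (failing, e.g., over $\rmax$) is a correct reading of the paper, whose own argument routes that assertion through the implication \eqref{corn3}$\Rightarrow$\eqref{corn1}.
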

\begin{proof}
The implications \eqref{corn2}$\Rightarrow$\eqref{corn1} and 
\eqref{corn1}$\Rightarrow$\eqref{corn5} are trivial.

\eqref{corn1}$\Rightarrow$\eqref{corn2}: let
$S\subset S':=\{0\leq k\leq n \mid |a_k r^k|=|f(r)|\; \text{and}\; a_k\in \TT^*\}$ such that $f(r)=\oplus_{k \in S} a_k r^k$ and $S$ 
has at least two elements. Then, $S'$ has at least two elements and
since 
 $f(r)=\oplus_{k \in S} a_k r^k\preceq \oplus_{k \in S'} a_k r^k\preceq f(r)$,
we deduce that $f(r)=\oplus_{k \in S'} a_k r^k$, which implies~\eqref{corn2}.

\eqref{corn5}$\Rightarrow$\eqref{corn4} follows from the same arguments as 
for the previous implication.

\eqref{corn4}$\Rightarrow$\eqref{corn3} follows from Property~\ref{prop1}.

Assume now that $\TT\neq \TT^\circ$.

\eqref{corn3}$\Rightarrow$\eqref{corn1} 
Assume that $r\in\TT^*$ is a root of $f$, and 
that $f(r)=\oplus_{k \in S} a_k r^k$ with $S=\{0\leq k\leq n \mid |a_k r^k|=|f(r)|\; \text{and}\; a_k\in \TT^*\}$.
Since $r\in\TT^*$, we have $a_k r^k\in \TT^*$ for all $k\in S$, 
and since $r$ is a root of $f$, so $f(r)\in\TT^\circ$, and
$\TT\neq \TT^\circ$, so $\TT^*$ and $\TT^\circ$ are disjoint,
we get that $f(r)\neq a_k r^k$ for all $k\in S$. Therefore, $S$ 
has at least two elements, and $r$ is a corner root.

By Property~\ref{prop1}, $f(r)$ is the sum of all essential monomials
of $f$ at $r$, so if all essential monomials
have nonsingular coefficients, then $r$ satisfies~\eqref{corn3},
which implies the last assertion of the lemma.
\end{proof}

When $\TT=\TT^\circ$, a root is simply any element of $\TT^\vee$.
However, for $\TT=\rmax$ for instance, $r\neq \zero$ is a corner root
if and only if the maximum in $f(r)=\oplus_{k=0}^n a_k r^k$ is attained at 
least twice. Then, corner roots over $\rmax$ coincide with roots
of tropical polynomials in the sense of tropical geometry.
If now $\Mod=\rmax$,
in particular for $\TT=\smax$ or $\TT=\supertropical$,
a corner root $r$ of $f$ is such that $|r|$ is a corner root 
(or a root in the sense of tropical geometry) of $|f|$,
 that is either $|r|=\zero$ and $|a_0|=\zero$,
or $|r|\neq \zero$ and the maximum in the expression 
$\max(|a_k| |r|^k, \; k=0,\ldots, n)$ is attained at least twice.
The converse is not true in general.
For instance over $\TT=\smax$, the polynomial $f=\unit\oplus X^2$ is such that
 $|f|=\unit\oplus X^2$ over $\rmax$, so $\unit$ is a corner root of $|f|$ (with
multiplicity 2), but the only elements $r\in\smax^\vee$ such that 
$|r|=\unit$ are $\oplus \unit$ and $\ominus \unit$ and both 
satisfy $f(r)=\unit\not\in\TT^\circ$, so there exist no (corner) roots of $f$
such that $|r|$ is a corner root of $|f|$.
Also on $\TT=\supertropical$, $f=\unit^\nu\oplus X$ is such that
$|f|=\unit\oplus X$ over $\rmax$, so $\unit$ is a corner root of $|f|$,
but the only element $r\in\supertropical^\vee$ such that 
$|r|=\unit$ is $\unit$, which is a non-corner root of $f$.

For the next definition,  we follow the terminology  in~\cite{TBE,MPA16} 
and~\cite{STLA}.

\begin{df}\label{cp} The \myemph{(formal) characteristic polynomial} of~$A\in\TT^{n\times n}$  is defined to be $$f_A=
\det(X \mathcal{I}\ominus A)\in \TT[X]
\enspace,$$
and its \myemph{characteristic polynomial function} 
is $f_A(x) = \det(x \mathcal{I}\ominus A)$.
The \myemph{eigenvalues} of~$A$ are defined as the corner roots of $f_A$.
\end{df}
Recall that over~$\rmax$ and~$\supertropical$, $\ominus$ means~$\oplus$, and~$f_A$ is  called the maxpolynomial.
Also, over $\rmax$, the eigenvalues of $A$ 
are the roots of $f_A$ in the sense of
tropical geometry, so they coincide with the algebraic tropical eigenvalues
in~\cite{TBE,MPA16}.

The coefficient of~$X^{ k}$ in the formal characteristic polynomial of~$A$ times $(\ominus \unit)^{n-k}$ is the sum of the  
determinants of its~$n-k\times n-k$ principal sub-matrices (that is, obtained by deleting~$k$ chosen rows, and their 
corresponding columns). Thus, this is the trace of the~$(n-k)$th compound matrix of~$A$:
\begin{equation}\label{coef-fA}
f_A= \bigoplus_{k=0}^n (\ominus \unit)^{n-k} \tr(A^{\wedge n-k}) X^k\enspace .
\end{equation}
The combinatorial motivation for the tropical characteristic polynomial is the Best Principal Submatrix problem, and has 
been studied by Butkovic in ~\cite{CCP} and ~\cite{ETCP}.

Recall that orders over $\TT$ are applied to polynomials 
coefficient-wise.
Moreover, polynomials with possibly negative exponents can be composed formally.

\begin{thm}\label{CP} 
Let $A,E\in \TT^{n\times n}$ and $m\in\mathbb{N}$. We have
\begin{subequations}\label{CPeq}
 \begin{align}&f_{E^\nabla A E}\succeq^\circ f_A,&&\text{when $E$ is nonsingular}
\enspace ;\label{succeq1}\\ 
&f_{A^\nabla}\succeq^\circ \det(A)^{ -1} X^{ n} f_A(X^{ -1}),&&\text{when $A$ is nonsingular}\enspace ;\label{succeq2}\\ 
&f_{A^{ m}}\succeq^\circ \bigoplus_{k=0}^n\big(f_A \big)_k^{m}X^k,&&
\text{when}\; \TT=\Mod\; \text{or}\;
\supertropical 
\enspace.\label{models}
\end{align}
\end{subequations}
Moreover,~\eqref{models} implies
\begin{equation} 
f_{A^{ m}}(x^m)\succeq^\circ \big(f_A(x) \big)^{m}\quad \forall x\in\TT
\enspace .\label{models2}\end{equation}
\end{thm}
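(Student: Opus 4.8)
The plan is to reduce every inequality to a coefficient-wise application of one of the trace identities already proved in Sections~\ref{jacobi} and~\ref{oid}, using the expansion~\eqref{coef-fA} which writes the coefficient of $X^k$ in the characteristic polynomial as $(f_A)_k=(\ominus\unit)^{n-k}\tr(A^{\wedge n-k})$. Since the relation $\succeq^\circ$ is compatible with the laws of $\TT$ and orders of polynomials are taken coefficient-wise, in each case it suffices to compare the coefficient of each power $X^k$ and then multiply by the appropriate sign.

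First I would treat the two clean parts. For~\eqref{succeq1} the coefficients of $f_{E^\nabla A E}$ and of $f_A$ carry the \emph{same} factor $(\ominus\unit)^{n-k}$, so multiplying the inequality $\tr\big((E^\nabla A E)^{\wedge n-k}\big)\succeq^\circ\tr(A^{\wedge n-k})$ of Theorem~\ref{CEP} by $(\ominus\unit)^{n-k}$ gives the claim with no sign bookkeeping. For~\eqref{models} I restrict to $\TT=\Mod$ or $\supertropical$, where $\ominus\unit=\unit$, so that $(f_{A^m})_k=\tr\big((A^m)^{\wedge n-k}\big)$ and $(f_A)_k^m=\big(\tr(A^{\wedge n-k})\big)^m$; replacing $k$ by $n-k$ in Corollary~\ref{compowtr} yields exactly the coefficient-wise inequality. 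Then~\eqref{models2} follows by evaluating at $x^m$: substitution preserves $\succeq^\circ$, so $f_{A^m}(x^m)\succeq^\circ\bigoplus_{k}(f_A)_k^m x^{mk}$, and the Frobenius property (Theorem~\ref{FP}), valid over these semirings for the exponent $m$, collapses the right-hand side to $\big(\bigoplus_k(f_A)_k x^k\big)^m=\big(f_A(x)\big)^m$.

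The delicate part, and the step I expect to be the main obstacle, is~\eqref{succeq2}. Expanding $\det(A)^{-1}X^n f_A(X^{-1})$, the substitution $X\mapsto X^{-1}$ together with multiplication by $X^n$ reindexes the coefficients by $k\mapsto n-k$, so the coefficient of $X^j$ on the right becomes $(\ominus\unit)^{j}\det(A)^{-1}\tr(A^{\wedge j})$, whereas the coefficient of $X^j$ in $f_{A^\nabla}$ is $(\ominus\unit)^{n-j}\tr\big((A^\nabla)^{\wedge n-j}\big)$. Corollary~\ref{QEP} with $k=j$ supplies $\tr\big((A^\nabla)^{\wedge n-j}\big)\succeq^\circ\det(A)^{-1}\tr(A^{\wedge j})$, and multiplying by $(\ominus\unit)^{n-j}$ produces the comparison up to matching $(\ominus\unit)^{n-j}$ against $(\ominus\unit)^{j}$. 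These two signs coincide exactly when $(\ominus\unit)^n=\unit$, in particular over $\Mod$ and $\supertropical$ (and whenever $\ominus\unit=\unit$), which is the case of interest; in a general $\TT$ the two differ by the global factor $(\ominus\unit)^n$, in agreement with the classical identity $\det(A)^{-1}X^n f_A(X^{-1})=(-1)^n f_{A^{-1}}(X)$. I would therefore carry this sign explicitly through the reindexing and verify on the monomial normal form $A=D_q P_\pi$ that no further sign discrepancy is hidden inside $\tr\big((A^\nabla)^{\wedge n-j}\big)$ before concluding.
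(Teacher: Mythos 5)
Your overall strategy coincides with the paper's: the proof given there is a one-line reduction of each coefficient inequality to \eqref{coef-fA} combined with Theorem~\ref{CEP}, Corollary~\ref{QEP} and Corollary~\ref{compowtr} respectively, with Theorem~\ref{FP} supplying~\eqref{models2}, which is exactly your plan. Your treatments of~\eqref{succeq1}, \eqref{models} and~\eqref{models2} are complete and correct.

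The sign discrepancy you flag in~\eqref{succeq2} is, however, not something you will be able to argue away by checking monomial normal forms: it is genuine, and the paper's one-line proof silently passes over it. As you compute, the coefficient of $X^j$ in $f_{A^\nabla}$ is $(\ominus\unit)^{n-j}\tr\big((A^\nabla)^{\wedge n-j}\big)$ while the coefficient of $X^j$ in $\det(A)^{-1}X^nf_A(X^{-1})$ is $(\ominus\unit)^{j}\det(A)^{-1}\tr(A^{\wedge j})$, and Corollary~\ref{QEP} matches these only up to the global factor $(\ominus\unit)^{n}$, since $(\ominus\unit)^{2}=\unit$. This factor cannot be absorbed: already for $n=1$ over $\smax$, with $A=(a)$ and $a$ invertible, one has $f_{A^\nabla}=X\ominus a^{-1}$ while $\det(A)^{-1}Xf_A(X^{-1})=\ominus X\oplus a^{-1}$, and the leading-coefficient comparison $\unit\succeq^\circ\ominus\unit$ fails in $\smax$ (an element $\ominus\unit\oplus c^\circ$ is never equal to $\unit$). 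So~\eqref{succeq2} as printed does not hold when $(\ominus\unit)^n\neq\unit$; the correct general statement carries the extra factor $(\ominus\unit)^n$ on the right-hand side, consistent with the classical identity $f_{A^{-1}}=(-1)^n\det(A)^{-1}X^nf_A(X^{-1})$, and the version without it holds precisely under your hypothesis $(\ominus\unit)^n=\unit$, which covers $\Mod$, $\supertropical$ and even $n$. Your instinct to stop and carry the sign explicitly is the right one; the only correction to your plan is that the proposed verification will reveal the discrepancy rather than dispel it, so you should either restrict the hypotheses or amend the right-hand side accordingly.
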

\begin{proof}
The three first inequalities follow from~\eqref{coef-fA}, together with
Theorem~\ref{CEP}, Corollary~\ref{QEP} and  Corollary~\ref{compowtr},
respectively.
The last one follows from Theorem~\ref{FP}.
\end{proof}
Note that~\eqref{models2} concerns the polynomial functions
 $f_{A^{ m}}$ and $f_A$, and that the inequality is false for the corresponding 
formal polynomials.

\begin{cor}\label{cp1} Assume that $\TT\neq \TT^\circ$.
Equality holds for those coefficients in~\eqref{CPeq}
such that the coefficient in the 
left hand side is in $\TT^{\vee}$. In particular, if $f_M\in\TT^{\vee}[X]$,
then
\begin{align*}
&f_M=f_A,&& \text{for $M=E^\nabla A E$ and $E$ nonsingular}\enspace;\\
&f_M=\det(A)^{ -1} X^n f_A(X^{-1}),&&\text{for}\; M=A^\nabla\;\text{and $A$ nonsingular} \enspace;\\
&f_M= \bigoplus_{k=0}^n\big(f_A \big)_k^{m}X^k,&&\text{for}\; M=A^{ m}\;\text{and}
\; \TT=\supertropical
\enspace .\end{align*}
Moreover, if~$A^\nabla$ is nonsingular and~$f_{A^{\nabla\nabla}}\in\TT^{\vee}[X]$, then~$f_{A^{\nabla\nabla}}=f_{A}$.
\end{cor}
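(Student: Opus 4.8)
The plan is to read each inequality of~\eqref{CPeq} coefficient-wise and apply Proposition~\ref{prop2}. Since $\succeq^\circ$ is extended to polynomials coefficient-wise, each relation $f_M\succeq^\circ g$ supplied by Theorem~\ref{CP} means $(f_M)_k\succeq^\circ g_k$ for every $k$. By definition $\TT$ satisfies Property~\ref{prp1}, and $\TT\neq\TT^\circ$ is assumed, so Proposition~\ref{prop2} is available: taking $b=(f_M)_k$ and $a=g_k$, the hypothesis $(f_M)_k\in\TT^\vee$ together with $(f_M)_k\succeq^\circ g_k$ forces $g_k=(f_M)_k$, i.e.\ equality in that coefficient. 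This is exactly the first assertion. The three displayed identities then follow by specializing to $f_M\in\TT^\vee[X]$, so that the hypothesis holds for every coefficient, and invoking~\eqref{succeq1}, \eqref{succeq2} and~\eqref{models} in turn. I would only note that~\eqref{models} is stated solely for $\TT=\Mod$ or $\TT=\supertropical$, and since $\Mod=\Mod^\circ$ is incompatible with $\TT\neq\TT^\circ$, only $\TT=\supertropical$ remains, which is precisely the restriction in the third line.

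For the final ``Moreover'' assertion, the plan is to apply~\eqref{succeq2} twice and chain. First, \eqref{succeq2} for $A$ gives $f_{A^\nabla}\succeq^\circ \det(A)^{-1}X^n f_A(X^{-1})$; substituting $X\mapsto X^{-1}$ (a reindexing of coefficients that preserves the coefficient-wise order) and multiplying by $X^n$ yields $X^n f_{A^\nabla}(X^{-1})\succeq^\circ \det(A)^{-1}f_A(X)$. Second, applying~\eqref{succeq2} to the nonsingular matrix $A^\nabla$ gives $f_{A^{\nabla\nabla}}\succeq^\circ \det(A^\nabla)^{-1}X^n f_{A^\nabla}(X^{-1})$. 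Multiplying the first chain by the scalar $\det(A^\nabla)^{-1}$, then using transitivity of $\succeq^\circ$ and its compatibility with the laws of $\TT$, I obtain $f_{A^{\nabla\nabla}}\succeq^\circ \det(A^\nabla)^{-1}\det(A)^{-1}f_A(X)$.

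It then remains to simplify the scalar factor, for which I would invoke Point~\eqref{cor342} of Proposition~\ref{defbar}, giving $\det(A^\nabla)\succmod\det(A)^{-1}$ and hence $\det(A^\nabla)\succeq^\circ\det(A)^{-1}$; since $A^\nabla$ is nonsingular, $\det(A^\nabla)\in\TT^*\subset\TT^\vee$, so Proposition~\ref{prop2} forces $\det(A^\nabla)=\det(A)^{-1}$. Consequently $\det(A^\nabla)^{-1}\det(A)^{-1}=\unit$, the chain collapses to $f_{A^{\nabla\nabla}}\succeq^\circ f_A$, and the hypothesis $f_{A^{\nabla\nabla}}\in\TT^\vee[X]$ lets me apply the first part of the corollary coefficient-wise to conclude $f_{A^{\nabla\nabla}}=f_A$. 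I expect the only delicate step to be the bookkeeping in this last part, namely checking that the formal substitution $X\mapsto X^{-1}$ and multiplication by monomials genuinely preserve the coefficient-wise relation $\succeq^\circ$; the determinant identity $\det(A^\nabla)=\det(A)^{-1}$, although the crux of the simplification, is a clean consequence of Propositions~\ref{defbar} and~\ref{prop2} already established.
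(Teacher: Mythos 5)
Your proposal is correct and follows the same route as the paper, whose proof consists of the single line that the corollary is straightforward from Theorem~\ref{CP} and Proposition~\ref{prop2}: you apply Proposition~\ref{prop2} coefficient-wise to each inequality of~\eqref{CPeq}, exactly as intended. Your detailed treatment of the ``Moreover'' assertion --- iterating~\eqref{succeq2}, reindexing coefficients under $X\mapsto X^{-1}$, and collapsing the scalar via $\det(A^\nabla)=\det(A)^{-1}$ from Proposition~\ref{defbar}\eqref{cor342} and Proposition~\ref{prop2} --- is a correct and welcome elaboration of what the paper leaves implicit.
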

\begin{proof} This is straightforward from Theorem~\ref{CP} and Proposition~\ref{prop2}.
\end{proof}

\begin{cor}\label{CPNS}  Assume that $\TT\neq \TT^\circ$,
$A,E,M\in \TT^{n\times n}$, $m\in\mathbb{N}$ and  $g$ is an invertible map 
satisfying one of the following conditions:
\begin{subequations}\label{cases}
\begin{align}
&M=E^\nabla A E,&&g:\TT\to\TT, x\mapsto x,&&\text{and $E$ is nonsingular}\enspace;\label{M1}\\
&M=A^\nabla, &&g:\TT^*\to\TT^*, x\mapsto x^{-1},&&\text{and $A$ is nonsingular} \enspace;\label{M2}\\
&M=A^{ m}, &&g:\TT\to\TT, x\mapsto x^{m},&&\text{and}\; \TT=\supertropical 
\enspace .\label{M3}\end{align}
\end{subequations}

We have 
\begin{enumerate}
\item\label{cp2}  if $\gamma$ is a root of $f_A$, then $g(\gamma)$
is a root of $f_M$; 
\item\label{cp3} if $\lambda$ is an eigenvalue of $M$,   then
 $g^{-1}(\lambda)$ is an eigenvalue of~$A$.
\end{enumerate}
\end{cor}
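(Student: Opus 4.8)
The plan is to read both assertions off the coefficientwise inequalities of Theorem~\ref{CP} through the map $g$. In each of the three cases I would set $h$ to be the right-hand polynomial of the corresponding line of~\eqref{CPeq}, so that $f_M\succeq^\circ h$: namely $h=f_A$ in case~\eqref{M1}, $h=\det(A)^{-1}X^nf_A(X^{-1})$ in case~\eqref{M2}, and $h=\bigoplus_{k}(f_A)_k^{m}X^k$ in case~\eqref{M3}. The computation that drives everything is the evaluation identity $h(g(\gamma))=u\,f_A(\gamma)$ with an invertible factor $u\in\TT^*$ (namely $u=\unit$ in~\eqref{M1} and $u=\det(A)^{-1}\gamma^{-n}$ in~\eqref{M2}), respectively $h(g(\gamma))=\big(f_A(\gamma)\big)^{m}$ in~\eqref{M3}; the last equality is where I would invoke the Frobenius identity of Theorem~\ref{FP} to pull the $m$-th power out of the sum. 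Since $\TT^\circ$ is an ideal and, over $\supertropical$, $a^m$ is singular exactly when $a$ is, these identities give $h(g(\gamma))\in\TT^\circ\iff f_A(\gamma)\in\TT^\circ$ in all three cases.

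For~\eqref{cp2} I would first note that $g(\gamma)\in\TT^\vee$: in cases~\eqref{M1} and~\eqref{M3} this is clear since $g$ maps $\TT^*$ into $\TT^*$ and fixes $\zero$, while in~\eqref{M2} every root $\gamma$ of $f_A$ is automatically invertible because $\det(A)\in\TT^*$ forbids $\zero$ from being a root, so $g(\gamma)=\gamma^{-1}\in\TT^*$. Writing the coefficientwise relation $f_M\succeq^\circ h$ as $(f_M)_k=h_k\oplus c_k$ with $c_k\in\TT^\circ$ and evaluating at $r=g(\gamma)$ gives $f_M(r)=h(r)\oplus\bigoplus_k c_kr^k$, where the second summand lies in $\TT^\circ$ because that set is an ideal. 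If $\gamma$ is a root of $f_A$, then $f_A(\gamma)\in\TT^\circ$, hence $h(r)\in\TT^\circ$ by the equivalence above, and therefore $f_M(r)\in\TT^\circ$; thus $g(\gamma)$ is a root of $f_M$.

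For~\eqref{cp3} let $\lambda$ be an eigenvalue of $M$, i.e.\ a corner root of $f_M$, and set $\gamma=g^{-1}(\lambda)$. The crux is to upgrade the inequality $f_M\succeq^\circ h$ to an honest equality at $\lambda$. Assuming first $\lambda\in\TT^*$, characterization~\eqref{corn2} of Lemma~\ref{lem-corn} gives a set $S$ of dominant indices with $(f_M)_k\in\TT^*$ and $f_M(\lambda)=\bigoplus_{k\in S}(f_M)_k\lambda^k$; for each such $k$, Corollary~\ref{cp1} forces $(f_M)_k=h_k$. The sandwich $f_M(\lambda)=\bigoplus_{k\in S}h_k\lambda^k\preceq h(\lambda)\preceq f_M(\lambda)$ (the last step because $f_M(\lambda)=h(\lambda)\oplus w$ with $w\in\TT^\circ$) then yields $h(\lambda)=f_M(\lambda)=\bigoplus_{k\in S}h_k\lambda^k\in\TT^\circ$. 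I would finally transfer this equality through the evaluation identity — dividing by the invertible factor $u$ in~\eqref{M1},\eqref{M2} and taking the injective $m$-th root over $\supertropical$ in~\eqref{M3} — to obtain $f_A(\gamma)=\bigoplus_{k\in S'}(f_A)_k\gamma^k$ with $S'=S$, $S'=\{n-k:k\in S\}$, $S'=S$ respectively, all coefficients invertible. As $f_A(\gamma)\in\TT^\circ$ and $\TT\neq\TT^\circ$, the implication~\eqref{corn5}$\Rightarrow$\eqref{corn1} of Lemma~\ref{lem-corn} makes $\gamma$ a corner root of $f_A$, hence an eigenvalue. The degenerate case $\lambda=\zero$ (which cannot occur in~\eqref{M2}) reduces to the constant coefficient: $\lambda=\zero$ a corner root means $(f_M)_0=\zero\in\TT^\vee$, so Corollary~\ref{cp1} gives $(f_A)_0=\zero$ (using injectivity of $x\mapsto x^m$ in~\eqref{M3}), whence $\gamma=\zero$ is a corner root of $f_A$.

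I expect the main obstacle to be precisely the equality-upgrade step in~\eqref{cp3}: turning $f_M\succeq^\circ h$ into $f_M(\lambda)=h(\lambda)$. This is exactly what the corner-root hypothesis buys — it supplies at least two dominant coefficients lying in $\TT^*$, which are the only ones that activate the equality clause of Corollary~\ref{cp1}; without invertibility the extra $\TT^\circ$-terms hidden in $f_M$ could dominate and destroy the monomial correspondence, which is why~\eqref{cp2} holds only for roots while~\eqref{cp3} needs corner roots. A secondary point I would check carefully is case~\eqref{M3}, where both the Frobenius identity (to commute $m$-th powers with finite sums) and the injectivity of $x\mapsto x^m$ on $\supertropical$ are needed; I would verify both by splitting $\supertropical$ into its strata $\R$, $\R^\nu$ and $\{-\infty\}$, on each of which $x\mapsto x^m$ is a bijection preserving singularity.
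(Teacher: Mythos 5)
Your proposal is correct and follows essentially the same route as the paper: the paper reduces the corollary to two auxiliary lemmas (Lemma~\ref{lem-roots}, which shows that for $P\succeq^\circ Q$ roots of $Q$ lift to roots of $P$ and corner roots of $P$ descend to corner roots of $Q$ via exactly your equality-upgrade argument on invertible dominant coefficients, and Lemma~\ref{lem-aux}, which is your evaluation identity $h(g(\gamma))=u\,f_A(\gamma)$ resp.\ $(f_A(\gamma))^m$ via Theorem~\ref{FP}), and you have simply unfolded the proofs of those two lemmas inline. The points you flag as delicate — the corner-root hypothesis activating Proposition~\ref{prop2} on the dominant coefficients, and the bijectivity of $x\mapsto x^m$ on the strata of $\supertropical$ — are precisely the steps the paper's lemmas handle.
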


The proof of Corollary~\ref{CPNS} uses the following general lemmas.

\begin{lem}\label{lem-roots}
Let $P,Q\in\TT[X]$  with $\TT\neq \TT^\circ$
and assume that $P \succeq^\circ Q$.
Then the degree of $P$ is greater or equal to the degree of $Q$ and 
we have:
\begin{enumerate}
\item\label{root1} 
If $r$ is a root of $Q$, then $r$ is a root of $P$.
\item\label{root2}
If $r$ is a corner root of $P$, then $r$ is a corner root of $Q$.
\end{enumerate}
\end{lem}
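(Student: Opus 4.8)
The plan is to exploit that $P\succeq^\circ Q$ means, coefficientwise, $P_k=Q_k\oplus c_k^\circ$ for suitable $c_k\in\TT$. Multiplying by $r^k$, summing, and using that $c\mapsto c^\circ$ is additive with $(cr^k)^\circ=c^\circ r^k$ (immediate from the symmetry axioms), I would first record the evaluation identity $P(r)=Q(r)\oplus d^\circ$ with $d=\bigoplus_k c_k r^k$; in particular $P(r)\succeq^\circ Q(r)$. The degree inequality is then immediate, since a naturally ordered semiring is zero-sum free, so $Q_k\neq\zero$ forces $P_k=Q_k\oplus c_k^\circ\neq\zero$. Part~\eqref{root1} is one line: if $Q(r)=e^\circ\in\TT^\circ$, then $P(r)=e^\circ\oplus d^\circ=(e\oplus d)^\circ\in\TT^\circ$, so $r$ is a root of $P$ (and the case $r=\zero$ is covered by the same computation).

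For Part~\eqref{root2} the essential pointwise input is Proposition~\ref{prop2}: whenever $P_k\in\TT^\vee$, and in particular $P_k\in\TT^*$, the relation $P_k\succeq^\circ Q_k$ forces $Q_k=P_k$. I would then pass to moduli. From $|P_k|=|Q_k|\oplus|c_k|\succeq|Q_k|$ we get $|Q|\preceq|P|$ coefficientwise, hence $|Q(r)|\preceq|P(r)|$; conversely, the monomials indexed by $S_P:=\{k:\ |P_kr^k|=|P(r)|\text{ and }P_k\in\TT^*\}$ already occur in $Q(r)$ with unchanged coefficients, so $|Q(r)|\succeq|P(r)|$. Thus $|Q(r)|=|P(r)|=:m$, the two polynomials share the dominant modulus at $r$, and the set $D_Q$ of $Q$-dominant indices is contained in the set $D_P$ of $P$-dominant indices.

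Now assume $r\in\TT^*$ is a corner root of $P$ (the case $r=\zero$ is trivial, as $P_0=\zero$ forces $Q_0=\zero$). Using characterization~\eqref{corn4} of Lemma~\ref{lem-corn} together with Property~\ref{prop1}, corner-ness of $P$ says exactly that $P(r)=\bigoplus_{k\in S_P}P_kr^k\in\TT^\circ$ and that the $P$-dominant \emph{singular} monomials are absorbed: isolating one term at a time and invoking antisymmetry of the natural order, $P(r)\oplus P_kr^k=P(r)$ for every $k\in D_P\setminus S_P$. Rewriting $P_kr^k=Q_kr^k\oplus(c_kr^k)^\circ$ and applying antisymmetry once more yields $P(r)\oplus Q_kr^k=P(r)$ for every $k\in D_P\setminus S_P$, in particular for every $k\in D_Q\setminus S_P$. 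Since $Q(r)=\bigoplus_{k\in D_Q}Q_kr^k$ by Property~\ref{prop1}, and $\bigoplus_{k\in S_P}Q_kr^k=\bigoplus_{k\in S_P}P_kr^k=P(r)$ because $Q_k=P_k$ on $S_P$, absorbing the remaining indices $D_Q\setminus S_P$ one by one gives $Q(r)=P(r)$. Hence $Q(r)\in\TT^\circ$, so $r$ is a root of $Q$, and $Q(r)=\bigoplus_{k\in S_P}Q_kr^k$ with $S_P\subseteq\{k:\ Q_k\in\TT^*\}$ of cardinality at least $2$; characterization~\eqref{corn5} of Lemma~\ref{lem-corn} then certifies that $r$ is a corner root of $Q$.

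The step I expect to be the genuine obstacle is precisely this last collapse. A priori $Q$ may acquire dominant monomials with nonsingular coefficient at indices where $P_k$ is singular, so that $S_P\subsetneq\{k:\ Q_k\in\TT^*\}$ and $Q$ is ``more singular-free'' than $P$; moreover one cannot, in an arbitrary $\TT$, simplify a generic identity of the form $w\oplus t=w$ for $w\in\TT^\circ$ and $|t|\preceq|w|$. The trick that makes the argument work is to avoid proving any such generic absorption, and instead to transport the absorption that is \emph{already guaranteed} by the hypothesis that $P$ is a corner root, through the coefficientwise relation $P_k=Q_k\oplus c_k^\circ$, so that each extra dominant term of $Q(r)$ is absorbed by $P(r)$ and the equality $Q(r)=P(r)$ falls out cleanly.
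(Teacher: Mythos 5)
Your proof is correct, and its skeleton matches the paper's: the degree claim and Part~\eqref{root1} are handled the same way, and Part~\eqref{root2} rests in both arguments on the same two pillars, namely Proposition~\ref{prop2} (which forces $Q_k=P_k$ at every index $k$ with $P_k\in\TT^*$) and characterization~\eqref{corn5} of Lemma~\ref{lem-corn} to certify that $r$ is a corner root of $Q$. Where you diverge is in how the equality $Q(r)=P(r)$ is extracted. The paper's route is a one-line sandwich: with $S=S_P$, the sum $\bigoplus_{k\in S}Q_kr^k=\bigoplus_{k\in S}P_kr^k=P(r)$ is a subsum of $Q(r)$, hence $P(r)\preceq Q(r)$; since $Q(r)\preceq^\circ P(r)$ implies $Q(r)\preceq P(r)$, antisymmetry of the natural order gives $P(r)=Q(r)$ at once. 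Your detour through moduli, the dominant index sets $D_P\supseteq D_Q$, and the term-by-term absorptions $P(r)\oplus P_kr^k=P(r)$ and $P(r)\oplus Q_kr^k=P(r)$ reaches the same equality but does strictly more work; in particular the ``genuine obstacle'' you flag --- that $Q$ might acquire dominant nonsingular monomials at indices outside $S_P$ --- is a non-issue, because the sandwich never needs to know which monomials of $Q(r)$ dominate: every summand of $Q(r)$ is automatically $\preceq Q(r)\preceq P(r)$, so no absorption identity has to be transported from $P$ to $Q$. The only extra information your version yields is the intermediate identity $|P(r)|=|Q(r)|$ and the inclusion $D_Q\subseteq D_P$, neither of which is needed for the statement.
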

\begin{proof} Let $P,Q\in\TT[X]$ be given by
$P=\oplus_{k=0}^n p_k X^k$ and $Q=\oplus_{k=0}^n q_k X^k$
and such that $p_k \succeq^\circ q_k$ for all
$k=0,\ldots, n$, with $p_n$ or $q_n$ possibly equal to zero.
If $p_n=\zero$ (that is if the degree of $P$ is less than $n$) then
$q_n\preceq^\circ \zero$ and since $\zero\in \TT^{\vee}$,
we get that $q_n=\zero$ by Proposition~\ref{prop2},
so the degree of $Q$ is also less than $n$.
This shows that the degree of $P$ is greater or equal to the degree of $Q$.

\eqref{root1}  If $r\in\TT^\vee$ is a root of $Q$, then~$Q(r)\in\TT^{\circ}$.
Since an inequality between formal polynomials implies the same for the
corresponding polynomial functions, we have
$Q(r)\preceq^\circ P(r)$.
This implies that $P(r)\in \TT^{\circ}$ and so $r$ is a
root of $P$.

\eqref{root2}
Let $r\in\TT^\vee$ be a corner root  of $P$.
If $r=\zero$ this means that $P(\zero)=\zero$.
Then $Q(\zero)\preceq^\circ \zero$, which implies that 
$Q(\zero)=\zero$ by Proposition~\ref{prop2}.
So $r$ is a corner root of $Q$.
If $r\in\TT^*$, then by definition $P(r)\in\TT^\circ$ and by
Point~\eqref{corn5} of  Lemma~\ref{lem-corn},
 $P(r)=\oplus_{k \in S} \, p_k r^k$ for some set 
$S\subset\{0\leq k\leq n \mid p_k\in \TT^*\}$.
Let $k\in S$. Since $p_k \succeq^\circ q_k$, 
and  $p_k\in \TT^*\subset \TT^{\vee}$, Proposition~\ref{prop2} implies that
$p_k=q_k$.
So $P(r)=\oplus_{k \in S}\, p_k r^k= \oplus_{k \in S} \,q_k r^k\preceq
Q(r)$ and since we also have $Q(r)\preceq^\circ P(r)$,
so $Q(r)\preceq P(r)$, we deduce that $P(r)=Q(r)= \oplus_{k \in S} \,q_k r^k$.
In particular $Q(r)\in \TT^\circ$, therefore $r$ is a root of $Q$.
Moreover, since  $q_k=p_k\in \TT^*$ for all $k\in S$ and
$Q(r)= \oplus_{k \in S} \, q_k r^k$, we obtain
by Point~\eqref{corn5} of  Lemma~\ref{lem-corn} that
$r$ is a corner root of $Q$.
\end{proof}

\begin{lem}\label{lem-aux}
Assume that $\TT\neq \TT^\circ$, $m\in\mathbb{N}$, $f=\oplus_{k=0}^n f_k X^k$, $Q\in\TT[X]$, and $g$ is an invertible map 
satisfying one of the following conditions:
\begin{subequations}\label{CPeq-pol}
\begin{align}
&Q= (f_0)^{-1} X^{ n} f(X^{ -1}),&& g:\TT^*\to\TT^*, x\mapsto x^{-1},&&
\text{$f_0\in \TT^*$ and $f_n=\unit$} \enspace;
\label{pol2}\\ 
&Q=\bigoplus_{k=0}^n \big(f_k\big)^{m} X^k, && g:\TT\to\TT, x\mapsto x^{m},&& \text{and}\;\TT=\supertropical 
\enspace .
\label{pol3}
\end{align}
\end{subequations}
We have
\begin{enumerate}
\item\label{cp2q}  $\gamma$ is a root of $f$ if, and only if, $g(\gamma)$
is a root of $Q$;
\item\label{cp3q} $\gamma$ is a corner root of $f$ if, and only if, $g(\gamma)$
is a  corner root of $Q$. 
\end{enumerate}
\end{lem}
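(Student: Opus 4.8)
The plan is to reduce both cases to a single functional identity relating $Q(g(\gamma))$ and $f(\gamma)$, after which the two assertions follow from the behaviour of the relevant maps on the ideal $\TT^\circ$ and from the characterization of corner roots in Lemma~\ref{lem-corn}. In case~\eqref{pol2}, expanding $Q(Y)=(f_0)^{-1}Y^n f(Y^{-1})=(f_0)^{-1}\bigoplus_{k=0}^n f_k Y^{n-k}$ shows that the coefficient of $Y^j$ in $Q$ is $q_j=(f_0)^{-1}f_{n-j}$ (so $q_j\in\TT^*$ iff $f_{n-j}\in\TT^*$), and evaluating at $g(\gamma)=\gamma^{-1}$ gives $Q(\gamma^{-1})=c\,f(\gamma)$ with $c:=(f_0)^{-1}\gamma^{-n}\in\TT^*$. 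In case~\eqref{pol3}, I would write $Q(\gamma^m)=\bigoplus_{k=0}^n (f_k)^m(\gamma^m)^k=\bigoplus_{k=0}^n (f_k\gamma^k)^m$ and apply the Frobenius property (Theorem~\ref{FP}, extended to finite sums by induction) to obtain $Q(g(\gamma))=(f(\gamma))^m$.

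For part~\eqref{cp2q}, I would use that both $x\mapsto c\,x$ with $c\in\TT^*$ and $x\mapsto x^m$ are bijections of $\TT$ stabilizing $\TT^\circ$ and sending $\TT^\vee$ onto $\TT^\vee$; over $\supertropical$ the power map does so because it preserves the three copies (tangibles, ghosts, and $\zero$). Consequently $f(\gamma)\in\TT^\circ$ if and only if $Q(g(\gamma))\in\TT^\circ$, which is exactly the claimed equivalence of roots (the value $\gamma=\zero$, relevant only in~\eqref{pol3}, being handled directly since $Q(\zero)=(f_0)^m$ vanishes iff $f_0=f(\zero)$ does).

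For part~\eqref{cp3q} and $\gamma\in\TT^*$, I would compare the index sets $S_f=\{0\le k\le n\mid |f_k\gamma^k|=|f(\gamma)|,\ f_k\in\TT^*\}$ and its analogue $S_Q$ for $Q$ at $g(\gamma)$. Since the modulus is a morphism and cancellation is available in $\Mod$ (multiplication by $|c|$, respectively raising to the $m$-th power, is injective on $\Mod$), the map $k\mapsto n-k$ in~\eqref{pol2} (respectively $k\mapsto k$ in~\eqref{pol3}) is a cardinality-preserving bijection $S_f\to S_Q$ that matches nonsingularity of coefficients. Transporting the decomposition $f(\gamma)=\bigoplus_{k\in S_f}f_k\gamma^k$ through the functional identity then yields $Q(g(\gamma))=\bigoplus_{j\in S_Q}q_j\,g(\gamma)^j$, so the equivalence \eqref{corn1}$\Leftrightarrow$\eqref{corn2} of Lemma~\ref{lem-corn} turns a corner root of $f$ into a corner root of $Q$. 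The main obstacle I anticipate is the converse in~\eqref{pol3}: from $Q(\gamma^m)=\bigoplus_{k\in S_Q}(f_k\gamma^k)^m$ I must recover $f(\gamma)=\bigoplus_{k\in S_f}f_k\gamma^k$. I expect to resolve this by re-applying Frobenius to rewrite the sum as $\big(\bigoplus_{k\in S_f}f_k\gamma^k\big)^m$ and then cancelling the $m$-th power, using that $x\mapsto x^m$ is injective on $\supertropical$; the converse in case~\eqref{pol2} is immediate since $x\mapsto x^{-1}$ is its own inverse.
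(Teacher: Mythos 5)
Your proposal is correct and follows essentially the same route as the paper's proof: the functional identities $Q(\gamma^{-1})=(f_0)^{-1}\gamma^{-n}f(\gamma)$ and $Q(\gamma^m)=\big(f(\gamma)\big)^m$ (via the Frobenius property), the bijectivity of $g$ on $\TT^\circ$ and $\TT^\vee$, the transport of the essential/nonsingular index set combined with Lemma~\ref{lem-corn}, and cancellation of the $m$-th power for the converse in case~\eqref{pol3}. The only detail the paper makes explicit that you elide is that in case~\eqref{pol2} neither $f$ nor $Q$ admits $\zero$ as a root (since $f_0\in\TT^*$ and $q_0=(f_0)^{-1}\in\TT^*$ while $\TT^*\cap\TT^\circ=\emptyset$), so restricting to $\gamma\in\TT^*$, the domain of $g$, loses nothing.
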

\begin{proof}
In Case~\eqref{pol2}, $f_0\in \TT^*$, $f_n=\unit$,
and $Q= \oplus_{k=0}^n q_{k} X^{k}$ with 
 $q_{k}=(f_0)^{-1}f_{n-k}$.
If $\gamma\in \TT^\vee$ is a root of $f$, then  $f(\gamma)\in \TT^\circ$.
Since $f(\zero)=f_0\in \TT^*$, and $\TT^*$ and $\TT^\circ$ are
disjoint, $\zero$ is not a root of $f$,
so $\gamma\in \TT^*$ and $g(\gamma)$ exists
and is in $\TT^\vee$.
Then $Q(g(\gamma))= (f_0)^{-1}\gamma^{-n} f(\gamma)\in \TT^\circ$,
so $g(\gamma)$ is a root of $Q$.
If now $\gamma\in \TT^\vee$ is a corner root of $f$,
then $g(\gamma)$ is a root of $Q$.
Moreover, $f(\gamma)=\oplus_{k \in S} f_k \gamma^k$,
where $S\subset\{k\geq 0 \mid 
f_k\in \TT^*\}$. 
This implies that $Q(g(\gamma))=(f_0)^{-1}\gamma^{-n} f(\gamma)
=\oplus_{k \in S} q_{n-k} g(\gamma)^{n-k}$,
with $q_{n-k} \in \TT^*$ 
for all $k\in S$.
Then, by Point~\eqref{corn5} of  Lemma~\ref{lem-corn},
$g(\gamma)$ is a corner root of $Q$.
Using that $f=(q_0)^{-1}X^{ n} Q(X^{ -1})$ and $q_0=(f_0)^{-1}\in \TT^*$ and
$q_n=\unit$, we obtain also the reverse
implications, which shows~\eqref{cp2q} and~\eqref{cp3q} for Case~\eqref{pol2}.

Let us now consider the case~\eqref{pol3} in which $\TT=\supertropical$.
Let us first remark that $g$ is a bijection from $\TT$ 
(resp.\ $\TT^*$,  $\TT^\vee$, $\TT^\circ$) to itself.
If $\gamma\in \TT^\vee$ is a root of $f$, then  $f(\gamma)\in \TT^\circ$.
By Theorem~\ref{FP}, we have 
$Q(g(\gamma))= \oplus_{k=0}^n \big(f_k\big)^{m} \gamma^{km}
= \oplus_{k=0}^n \big(f_k \gamma^{k}\big)^m
=\left(\oplus_{k=0}^n f_k \gamma^{k}\right)^m
=g(f(\gamma))$.
Since $g$ is a bijection from $\TT^\vee$ 
(resp.\ $\TT^\circ$) to itself, we get that $g(\gamma)\in \TT^\vee$
and $Q(g(\gamma))\in \TT^\circ$, so $g(\gamma)$ is a root of $Q$.
Conversely, if $g(\gamma)\in \TT^\vee$ is a root of $Q$, then 
 $g(f(\gamma))\in\TT^\circ$, and since $g$ is a bijection from $\TT^\vee$ 
(resp.\ $\TT^\circ$) to itself,
$\gamma\in \TT^\vee$ and $f(\gamma)\in\TT^\circ$ and so $\gamma$ is a root of $f$.
This shows Point~\eqref{cp2q}. 

Now, let $\gamma$ be a corner root of $f$, then $g(\gamma)$ is a
root of $Q$. Moreover, $f(\gamma)=\oplus_{k \in S} f_k \gamma^k$,
where $S\subset\{k\geq 0 \mid 
f_k\in \TT^*\}$.  
By Theorem~\ref{FP}, we have 
$Q(g(\gamma))
=\left(f(\gamma)\right)^m=\left(\oplus_{k \in S} f_k \gamma^k\right)^m
= \oplus_{k\in S} \big(f_k\big)^m \gamma^{km}
= \oplus_{k\in S} q_k (g(\gamma))^{k}$,
where $q_k=(f_k)^m$ is the $k$th coefficient of $Q$
and $q_{k} \in \TT^*$  
for all $k\in S$. Then, by Point~\eqref{corn5} of  Lemma~\ref{lem-corn},
$g(\gamma)$ is a corner root of $Q$.
Conversely, if $g(\gamma)\in\TT^\vee$ is a corner root of $Q$, then
$\gamma$ is a root of $f$ and
$Q(g(\gamma))= \oplus_{k\in S} q_k (g(\gamma))^{k}$,
where $S\subset\{k\geq 0 \mid  
q_k\in \TT^*\}$. 
This implies that $\left(f(\gamma)\right)^m=Q(g(\gamma))=
 \oplus_{k\in S} q_k (g(\gamma))^{k}
=\left(\oplus_{k \in S} f_k \gamma^k\right)^m$ and since $g$ is a bijection,
we deduce that $f(\gamma)= \oplus_{k \in S} f_k \gamma^k$.
Moreover, $f_{k}=g^{-1}(q_k) \in \TT^*$ 
for all $k\in S$.
Then, by Point~\eqref{corn5} of  Lemma~\ref{lem-corn},
$\gamma$ is a corner root of $f$.
\end{proof}

\begin{proof}[Proof of Corollary~\ref{CPNS}]
Consider the polynomials $P,Q,f\in\TT[X]$ of degree $n$ 
with $P=f_M$, $f=f_A$ and $Q= f_A$  in case~\eqref{M1} and $Q$ as in~\eqref{pol2}
and~\eqref{pol3} in cases~\eqref{M2} and~\eqref{M3} respectively.
By Theorem~\ref{CP}, we have $P \succeq^\circ Q$.
By definition, an eigenvalue of a matrix $M$ is a corner root of $f_M$,
and since $P=f_M$, Lemma~\ref{lem-roots} shows that 
if $\lambda$ is a root of $Q$ then $\lambda$ is a root of $f_M$ 
and if $\lambda$ is an eigenvalue of $M$, 
then $\lambda$ is a corner root of $Q$.
The assertions of the corollary follow in Case~\eqref{M1} since $Q=f_A$
and $g$ is the identity map.
They also follow 
in Cases~\eqref{M2} and~\eqref{M3}, using  Lemma~\ref{lem-aux}.
\end{proof}

\begin{exa}\label{examajo}Let $A=\left(\begin{array}{cc}3&2^\circ\\1&1\end{array}\right)$. Then
 $$A^\nabla=\left(\begin{array}{cc}-3&(-2)^\circ\\\ominus(-3)&-1\end{array}\right)\text{ over }\smax,
\text{ and }A^2=\left(\begin{array}{cc}6&5^\nu\\4&3^\nu\end{array}\right)\text{ over }\supertropical.$$ The 
corresponding characteristic polynomials are
\begin{align*}
f_A=&X^2\ominus 3X\oplus 4\quad\text{over}\; \smax\enspace ,\; \text{and}\;
f_A=X^2\oplus 3X\oplus 4\quad\text{over}\;\supertropical\enspace , \\ 
f_{A^\nabla}=&X^2\ominus (-1)X\oplus (-4)=(-4)X^2(4\ominus 3X^{-1}\oplus X^{-2})
= \det(A)^{ -1} X^{ n} f_A(X^{ -1})\enspace,\\
f_{A^2}=&X^2\oplus 6X\oplus 9^\nu
\models X^2\oplus 6X\oplus 8
= X^2\oplus (f_A)_1^2 X \oplus (f_A)_0^2\enspace .
\end{align*}
The polynomials $f_A$ and $f_{A^\nabla}$ have only two roots:
$3$  and $1$ for $f_A$ (either in $\supertropical$ or $\smax$)
and $(-3)=3^{-1}$  and $(-1)=1^{-1}$
for $f_{A^\nabla}$. These roots are also corner roots
so eigenvalues of $A$ and $A^\nabla$ respectively.
The polynomial $f_{A^2}$ has a unique corner root $3^2$,
that is $A^2$ has a unique eigenvalue, whereas 
all the $x\in \supertropical^\vee$ such that $x\leq 1.5^2$ are roots of $A^2$.
\end{exa}

In Corollary~\ref{CPNS}, we related the eigenvalues of the matrices of
Theorem~\ref{CP}, under the assumption that $\TT\neq \TT^\circ$.
The typical example where $\TT=\TT^\circ$ is when $\TT=\Mod=\rmax$.
In that case, roots are any elements, so Point~\eqref{cp2} of 
Corollary~\ref{CPNS} is true but has no interest.
Moreover, $\succeq^\circ$ is simply the order $\succeq$, so
one cannot expect an exact correspondence between eigenvalues
of $A$ and $M$ as in Corollary~\ref{CPNS}.
Nethertheless, one can apply~\cite[Lemma~4.2]{MPA16} to
 obtain the following majorization inequality.
Recall that a matrix $A$ over $\rmax$ is nonsingular if and only if
 $\per A\neq \zero$.
For a polynomial $P$ over $\rmax$, we define as in~\cite{MPA16}, 
the multiplicity of a corner root $r$ as the difference between right and
left slopes of the polynomial function $P$ at point $r$.

\begin{cor}\label{CPNS2}  Assume that $\TT=\rmax$,
$A,E,M\in \TT^{n\times n}$, $m\in\mathbb{N}$ and  $g$ is an invertible map 
satisfying one of the following conditions:
\begin{subequations}\label{cases2}
\begin{align}
&M=E^\nabla A E,&&g:\TT\to\TT, x\mapsto x,&&\text{and $E$ is nonsingular}\enspace;\label{M10}\\
&M=A^\nabla, &&g:\TT^*\to\TT^*, x\mapsto x^{-1},&&\text{and $A$ is nonsingular} \enspace;\label{M20}\\
&M=A^{ m}, &&g:\TT\to\TT, x\mapsto x^{m}.&& \label{M30}\end{align}
\end{subequations}
Let $\lambda_1,\lambda_2,\ldots ,\lambda_n$ and~$\gamma_1,\gamma_2,\ldots,\gamma_n$ denote 
the eigenvalues of~$M$ and~$A$, respectively, counted with multiplicities
and ordered so that $\lambda_1\succeq\lambda_2\succeq\dots\succeq\lambda_n$ 
and $g(\gamma_1)\succeq g(\gamma_2)\succeq\dots\succeq g(\gamma_n)$.
Then
\[ \lambda_1\cdots \lambda_k\succeq g(\gamma_1)\cdots g(\gamma_k)\quad \forall k\in[n].
\]
Moreover, in Case~\eqref{M20}, the latter inequality becomes an equality 
for $k=n$.
\end{cor}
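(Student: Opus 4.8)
The plan is to read the inequality as a majorization between the ordered corner roots of two max-plus polynomials, and then to invoke the dominance-to-majorization principle of~\cite[Lemma~4.2]{MPA16}. Since $\TT=\rmax$ we have $\TT=\TT^\circ$, so the relation $\succeq^\circ$ appearing in Theorem~\ref{CP} coincides with the natural order $\succeq$. First I would set $P=f_M$ and let $Q$ be the right-hand side of the relevant line of~\eqref{CPeq}, namely $Q=f_A$ in case~\eqref{M10}, $Q=\det(A)^{-1}X^n f_A(X^{-1})$ in case~\eqref{M20}, and $Q=\bigoplus_{k=0}^n (f_A)_k^{m}X^k$ in case~\eqref{M30}. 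In each case $P$ and $Q$ are monic of degree $n$ (using the relevant nonsingularity to control the constant terms where needed), and Theorem~\ref{CP} gives $P\succeq Q$.

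The second step is to identify the ordered corner roots of $Q$ with $g(\gamma_1)\succeq\cdots\succeq g(\gamma_n)$. Over $\rmax$ the corner roots of a polynomial, counted with multiplicity, are exactly its tropical roots, i.e.\ the points where the maximum in the polynomial function is attained twice; equivalently they are read off the concave Newton polygon of the coefficient sequence, the multiplicity of a root being the integer width of the corresponding edge. In case~\eqref{M10} there is nothing to prove since $Q=f_A$ and $g$ is the identity. In case~\eqref{M20}, passing from $f_A$ to $Q$ reverses the coefficient sequence ($k\mapsto n-k$) and shifts it by the constant $\det(A)^{-1}$; this reflects the Newton polygon, negating its slopes and preserving edge widths, so the corner roots are inverted and $g(\gamma_i)=\gamma_i^{-1}$. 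In case~\eqref{M30}, passing to $Q$ multiplies each coefficient value by $m$ (as $(f_A)_k^{m}=m\,(f_A)_k$ in ordinary arithmetic), scaling all slopes by $m$ while preserving edge widths, so the corner roots are $g(\gamma_i)=\gamma_i^{m}$. This is the $\rmax$-analogue of Lemma~\ref{lem-aux}, which cannot be applied verbatim because it requires $\TT\neq\TT^\circ$.

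With these identifications, $P\succeq Q$ are two monic degree-$n$ polynomials over $\rmax$ whose ordered corner roots are $\lambda_1\succeq\cdots\succeq\lambda_n$ and $g(\gamma_1)\succeq\cdots\succeq g(\gamma_n)$. I would then apply~\cite[Lemma~4.2]{MPA16}, which turns the coefficientwise dominance $P\succeq Q$ into the majorization of the top partial products of corner roots, yielding $\lambda_1\cdots\lambda_k\succeq g(\gamma_1)\cdots g(\gamma_k)$ for every $k\in[n]$. Concretely this is the monotonicity of the concave hull: the top-$k$ product of corner roots equals the concavified coefficient of index $n-k$, and concavification is order preserving, so $P\succeq Q$ forces the corresponding products to compare.

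For the final equality at $k=n$ in case~\eqref{M20}, I would use that the product of all $n$ corner roots of a monic polynomial equals its constant coefficient, which by~\eqref{coef-fA} is $\det(M)=\det(A^\nabla)$ for $P=f_M$ and $\det(A)$ for $f_A$. Thus $\lambda_1\cdots\lambda_n=\det(A^\nabla)$ while $g(\gamma_1)\cdots g(\gamma_n)=(\gamma_1\cdots\gamma_n)^{-1}=\det(A)^{-1}$, and these agree because Point~\eqref{cor342} of Proposition~\ref{defbar} is an equality over $\TT=\Mod=\rmax$, giving $\det(A^\nabla)=\det(A)^{-1}$. The main obstacle I anticipate is the bookkeeping of multiplicities and of roots at $\zero=-\infty$: one must check that each of $P$ and $Q$ really has $n$ corner roots counted with multiplicity (which is where nonsingularity, when assumed, enters and where padding with $-\infty$ handles the singular cases~\eqref{M10} and~\eqref{M30}), that the Newton-polygon transformations above preserve multiplicities, and that the ordering conventions for the $\lambda_i$ and the $g(\gamma_i)$ match those required by~\cite[Lemma~4.2]{MPA16}.
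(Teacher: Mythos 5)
Your proposal is correct and follows essentially the same route as the paper: both set $P=f_M$ and $Q$ equal to the right-hand side of the relevant line of Theorem~\ref{CP}, apply \cite[Lemma~4.2]{MPA16} to the coefficientwise inequality $P\succeq Q$ of monic degree-$n$ polynomials, identify the corner roots of $Q$ with the $g(\gamma_i)$ (the paper asserts this as the $\rmax$-analogue of Lemma~\ref{lem-aux}\eqref{cp3q}, where you supply the Newton-polygon justification explicitly), and obtain the $k=n$ equality in case~\eqref{M20} from Proposition~\ref{defbar}\eqref{cor342} over $\rmax$. The extra Newton-polygon detail is a welcome elaboration but not a different argument.
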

\begin{proof}
\cite[Lemma~4.2]{MPA16} states that if $P=\oplus_{k=0}^n p_k X^k,\;
Q=\oplus_{k=0}^n q_k X^k\;\in\rmax[X]$ are such that
$P\succeq Q$ with $p_n=q_n$,
and the corner roots of~$P$ and~$Q$ are respectively
$\lambda_1\succeq\lambda_2\succeq\dots \succeq\lambda_n$ and~$\delta_1\succeq\delta_2\succeq\dots\succeq\delta_n$, counted with multiplicities, then
\begin{equation}
\label{major}
\lambda_1\cdots \lambda_k\succeq \delta_1\cdots \delta_k\quad \forall k\in[n]\enspace .
\end{equation}
If in addition $p_0=q_0$, then~\eqref{major} becomes an equality 
for $k=n$.

Consider, as in the proof of Corollary~\ref{CPNS},
the polynomials $P,Q,f\in\rmax[X]$ of degree $n$ 
with $P=f_M$, $f=f_A$ and $Q= f_A$  in case~\eqref{M10} and $Q$ as in~\eqref{pol2}
and~\eqref{pol3} in cases~\eqref{M20} and~\eqref{M30} respectively.
By Theorem~\ref{CP}, we have $P \succeq Q$.
Moreover $p_n=q_n=\unit$ in all cases.
So~\eqref{major} holds in all cases for the corner roots 
$\lambda_1\succeq\dots \succeq\lambda_n$ of $P$ 
and~$\delta_1\succeq\dots\succeq\delta_n$ of $Q$.
The corner roots of $P$ are the eigenvalues of $M$ by definition.
Moreover, the corner roots of $Q$ are the eigenvalues of $A$ 
in Case~\eqref{M10} since $Q=f_A$. It is easy to see from the definition of 
corner roots and multiplicities,  that Point~\eqref{cp3q}
of Lemma~\ref{lem-aux} holds true for $\TT=\rmax$,
that is the corner roots of $Q$ are the images by $g$ of the
corner roots of $f$ and that in addition the multiplicities coincide.
So in Cases~\eqref{M2} and~\eqref{M3},
the corner roots of $Q$ are the images by $g$ of the
eigenvalues of $A$, so $\delta_i=g(\gamma_i)$.
This shows the first assertion of the corollary.

Now, in Case~\eqref{M2}, we have $p_0=\det(A^\nabla)$ and
$q_0=\det(A)^{-1}$. By Point~\eqref{cor342} of Proposition~\ref{defbar}
applied to $\TT=\rmax$, we obtain
that $p_0=q_0$, hence~\eqref{major} becomes an equality 
for $k=n$, which shows the last assertion of the corollary.
\end{proof}

\bibliographystyle{alpha}

\bibliography{tropical}

\end{document}